\documentclass[a4paper, 10pt, journal]{ieeeconf}      %
\pdfminorversion=4
\IEEEoverridecommandlockouts %
\overrideIEEEmargins

\renewcommand{\QED}{\QEDopen}

\usepackage{amsmath} %
\usepackage{thmtools}

\newtheorem{theorem}{Theorem}[section]
\newtheorem{proposition}[theorem]{Proposition}

\newtheorem{lemma}[theorem]{Lemma}
\newtheorem{corollary}[theorem]{Corollary}

\newtheorem{remark}[theorem]{Remark}

\newtheorem{open problem}[theorem]{Open problem}
\newtheorem{definition}[theorem]{Definition}

\newenvironment{proofof}[1]{\par\mbox{\textit{\it Proof of #1:} ~}\ignorespaces}{\strut\hfill\QED}

\usepackage{amsfonts} %
\usepackage{amssymb} %
\usepackage{amsopn} %
\usepackage{ifthen}
\usepackage{suffix} %

\newcommand \mc {\mathcal}

\DeclareMathAlphabet{\mbb}{U}{bbold}{m}{n} 
\newcommand \mb [1] {\ifthenelse{\equal{#1}{0}}{\mbb{0}}{\ifthenelse{\equal{#1}{1}}{\mbb{1}}{\mathbb{#1}}}} %

\newcommand \RR {\mb R}

\newcommand{\on}{\operatorname}

\newcommand \inv {^{-1}}
\newcommand \T {^\top}
\newcommand \Tinv {^{-\top}}
\newcommand \invT {^{-\top}}
\renewcommand \H {^\mathrm{*}}
\newcommand \comp {^{\mathrm{c}}}

\WithSuffix \newcommand \inv* {\mathstrut^{-1}}
\WithSuffix \newcommand \T* {\mathstrut^\top}
\WithSuffix \newcommand \Tinv* {\mathstrut^{-\top}}
\WithSuffix \newcommand \invT* {\mathstrut^{-\top}}
\WithSuffix \newcommand \H* {\mathstrut^\mathrm{*}}
\WithSuffix \newcommand \comp* {\mathstrut^{\mathrm{c}}}

\let\tilde\widetilde

\let\hat\widehat

\renewcommand \subset {\subseteq}

\newcommand \diag [1] {\on{diag}}
\newcommand {\cl} [1] {\operatorname{cl}(#1)}
\newcommand {\conv} [1] {\operatorname{conv}(#1)}
\newcommand {\inter} [1] {\operatorname{int}(#1)}

\newcommand {\myvphantom} [1] {\let \\ \empty \vphantom{#1}}
\newcommand {\set} [2] {%
\left\{%
\myvphantom{#1#2}%
\right.%
#1%
\left|%
\myvphantom{#1#2}%
\right.#2%
\left.%
\myvphantom{#1#2}%
\right\}%
}

\newcommand{\pdd}[2]{\frac{\partial#1}{\partial#2}}

\newcommand \Ie {\textit{I.e.}}
\def \ie {\textit{i.e.}}
\newcommand \eg {\textit{e.g.}}

\renewcommand{\l}{\left}
\renewcommand{\r}{\right}
\newcommand{\nolabel}{\nonumber}

\usepackage{pdfpages} %
\usepackage{xcolor} %
\usepackage{graphicx} %
\usepackage{enumerate} %

\usepackage{xr} %
\externaldocument[P1-]{main_part_1}
\newcommand{\PIref}[1]{\ref{P1-#1} of Part~I}

\def\mytitle{DC power grids with constant-power loads\textemdash Part II: nonnegative power demands, conditions for feasibility, and high-voltage solutions}
\def\mythanks{This work is supported by NWO (Netherlands Organisation for Scientific Research) project `Energy management strategies for interconnected smart microgrids' within the DST-NWO Joint Research Program on Smart Grids.}

\def\myauthors{Mark Jeeninga, Claudio De Persis and Arjan van der Schaft}
\def\myaffiliation{University of Groningen, 9747AG Groningen, The Netherlands (e-mail: \{m.jeeninga, c.de.persis, a.j.van.der.schaft\}@rug.nl)}

\title{\mytitle}

\author{\myauthors%
	\thanks{\mythanks}%
	\thanks{\myaffiliation}%
}

\begin{document}

\maketitle
\thispagestyle{empty}
\pagestyle{empty}

\begin{abstract} 
In this two-part paper we develop a unifying framework for the analysis of the feasibility of the power flow equations for DC power grids with constant-power loads.

Part II of this paper explores further implications of the results in Part~I. 
In particular, we refine several results in Part~I to obtain a necessary and sufficient condition for the feasibility of nonnegative power demands, which is cheaper to compute than the necessary and sufficient LMI condition in Part~I. 
Moreover, we prove two novel sufficient conditions, which generalize known sufficient conditions for power flow feasibility in the literature.
In addition, we prove that the unique long-term voltage semi-stable operating point associated to a feasible vector of power demands is a strict high-voltage solution.
A parametrization of such operating points, which is dual to the parametrization in Part~I, is also obtained, along with a parametrization of the boundary of the set of feasible power demands.
\end{abstract}
\section{Introduction}
The feasibility of the power flow equations is of crucial importance for the long-term safe operation of a power grid.
Classical papers such as \cite{lof1993analysis,62415,tinney1967power} have studied this problem for AC power grids, and over the past decade, the research for AC power grids has been reinvigorated by articles such as \cite{dorfler2013novel,dymarsky2014convexity,bolognani2015existence,simpson2016voltage,barabanov2016}.
Unfortunately, a complete understanding of this problem is still lacking.

Similar to the AC case, the somewhat simpler case concerning DC power grids is also not well-understood.
A notable advancement is \cite{matveev2020tool}, which presents an algorithm to decide on the feasibility of the DC power flow equations with constant-power loads. %
However, a full characterization of the feasibility of the DC power flow equations is not found in the literature.
For a more detailed introduction we refer to Part~I of this paper.

The aim of this twin paper is to provide an in-depth analysis of the power flow equations of DC power grids with constant-power loads, and develop a framework which unifies and extends known results in the literature. 
In Part~I we presented a complete geometric characterization of the feasibility of the associated power flow equations. More importantly, we obtained necessary and sufficient conditions for their feasibility, and presented a method to compute the corresponding long-term voltage semi-stable operating point, which was shown to be unique.
These advances fill an important gap in the literature, and provide a deep insight in the nature of power flow feasibility and voltage stability of power grids with constant-power loads. In Part~II of this paper continues this approach by studing nonnegative power demands, sufficient conditions for feasibility, and high-voltage solutions.

\subsection*{Contribution}
The main objective of this twin paper is to analyze the set, denoted as $\mc F$, of constant power demands for which the power flow equations are feasible, and their associated operating points.
In the following we let $\mc D$ denote the set of long-term voltage stable operating points. 
We refer to the vectors in $\cl{D}$, the closure of $\mc D$, as \emph{long-term voltage semi-stable} operating points.
The main contributions of Part~I of this paper are summarized as follows.
\newcommand{\mainresult}[1]{\textbf{M\ref{main results:#1}}}
\begin{enumerate}[\bf M1.]
\item We give a parametrization of $\mc D$, its closure and its boundary, which establishes a constructive method to describe the long-term voltage (semi-)stable operating points. \label{main results:parametrization of D}
\item For each vector of power demand that lies on the boundary of $\mc F$ there exists a unique corresponding operating point which solves the power flow equations. Moveover, these operating points form the boundary of the set $\mc D$.%
\label{main results:one-to-one boundary}%
\item There is a one-to-one correspondence between the feasible power demands $\mc F$ and the long-term voltage semi-stable operating points $\cl{\mc D}$. This means that if the power flow equations are feasible, then there exists a unique long-term voltage semi-stable operating point that solves the power flow equations.
This operating point can be found by solving an initial value problem.\label{main results:one-to-one correspondence}%
\item We give a novel and insightful proof for the fact that the set $\mc F$ is closed and convex. Consequently, $\mc F$ is the intersection of all supporting half-spaces of $\mc F$. We describe all such half-spaces, which gives a complete geometric characterization of $\mc F$.\label{main results:convexity of F}%
\item %
We prove a necessary and sufficient LMI condition for the feasibility of the power flow equations, %
and a necessary and sufficient LMI condition for the feasibility of the power flow equations under small perturbations.\label{main results:necessary and sufficient condition}%
\end{enumerate}
The aim of Part~II %
is to extend and unify known results in the literature by elaborating on the framework of Part~I.
In particular, we study the case where all power demands are nonnegative, generalize sufficient conditions in the literature, and look at the operating points in more detail.
The main results of Part~II are as follows.
\begin{enumerate}[\hspace{5pt}\bf M1.]\setcounter{enumi}{5}
\item We give an alternative parametrization of $\mc D$, its closure and its boundary (Theorem~\ref{theorem:alternative parametrization of D}), which is in a sense dual to the parametrization mentioned in \mainresult{parametrization of D}. \label{main results:alternative parametrization of D}
\item We give two parametrizations of $\partial\mc F$, the boundary of the set of feasible power demands (Theorem~\ref{theorem:parametrization of boundary of F}, Corollary~\ref{corollary:alternative boundary of F}). %
\label{main results:boundary of F}%
\item We consider the restriction of $\mc F$ to nonnegative power demands, and present a parametrization for $\partial\mc F$ for nonnegative power demands (\textbf{a}) which is cheaper to compute than \mainresult{boundary of F}. Moreover, we deduce a refinement of the necessary and sufficient condition \mainresult{necessary and sufficient condition} for nonnegative power demands (\textbf{b}) which is cheaper to compute than \mainresult{necessary and sufficient condition} (Theorem~\ref{theorem:parametrization nonnegative boundary of F}, Theorem~\ref{theorem:nonnegative necessary and sufficient condition}).\label{main results:nonnegative power demands}
\item We prove that any vector of power demands that is element-wise dominated by a feasible vector of power demands is also feasible (Lemma~\ref{lemma:power demand domination}).\label{main results:power demand domination}
\item We present two novel sufficient conditions for the feasibility of the power flow equations which generalize the sufficient conditions in \cite{simpson2016voltage} and \cite{bolognani2015existence} (Corollary~\ref{corollary:sufficient condition nonnegative}, Theorem~\ref{theorem:generalization simpson-porco condition}), and show how these conditions are related (Lemma~\ref{lemma:bolognani boundary}).\label{main results:sufficient conditions}
\item We show that the long-term voltage stable operating point is a strict high-voltage solution (Theorem~\ref{theorem:dominating solution}).
Consequently, the operating points associated to a feasible power demand which are either long-term voltage stable, a high-voltage solution, or dissipation-minimizing, are one and the same (Theorem~\ref{theorem:desirable operating point}).\label{main results:desirable operating point}
\end{enumerate}
We briefly discuss how these results are related to the literature.
Regarding \mainresult{convexity of F}, the convexity of $\mc F$ was shown in \cite{dymarsky2014convexity}.
The result \mainresult{necessary and sufficient condition} proves that the necessary condition in \cite{barabanov2016} is also sufficient, and further extends the condition to power demands which are feasible under small perturbations.
The results of \mainresult{sufficient conditions} generalize the sufficient conditions of \cite{bolognani2015existence} and \cite{simpson2016voltage}.
The paper \cite{matveev2020tool} proved that, if the power flow equations are feasible, then there exists a high-voltage solution which is ``almost surely'' long-term voltage stable. In addition, \cite{matveev2020tool} gives a sufficient condition for which this is the unique long-term voltage stable operating point associated to a vector of power demands.
We show that this operating point is a strict high-voltage solution and always %
coincides with the unique long-term voltage semi-stable operating point (\mainresult{desirable operating point}, \mainresult{one-to-one correspondence}).\\
We refer the reader to Part~I for a more detailed discussion. %

\subsection*{Organization of Part~II}

In Section~\ref{section:summary of part 1} a summary is given of the models, definitions and results from Part~I of this paper. %

Section~\ref{section:nonnegative power demands} focuses on nonnegative power demands, and studies when such power demands are feasible. 
First, we give an alternative parametrization of $\mc D$ and discuss its relation to the parametrization of $\mc D$ in Part~I (\mainresult{alternative parametrization of D}). By means of this parametrization we study the boundary of $\mc F$ (\mainresult{boundary of F}), and derive a parametrization for the boundary of feasible power demands in the nonnegative orthant (\mainresult{nonnegative power demands}\textbf{a}).
This allows us to refine the necessary and sufficient condition \mainresult{necessary and sufficient condition} for nonnegative power demands (\mainresult{nonnegative power demands}\textbf{b}). %

Section~\ref{section:sufficient conditions} recovers and generalizes several sufficient conditions in the literature in the context of DC power grids.
More specifically, we prove two sufficient conditions (\mainresult{sufficient conditions}) which generalize the sufficient conditions in \cite{simpson2016voltage} and %
\cite{barabanov2016}.
In addition, we show that any power demand which is element-wise dominated by a feasible power demand is feasible as well (\mainresult{power demand domination}).

Section~\ref{section:high-voltage solution} focuses on the long-term voltage semi-stable operating points.
We show that any such operating point is a strict high-voltage solution. As a consequence, the notions of long-term voltage stable operation points, dissipation-minimizing operation points and (strict) high-voltage solutions coincide (\mainresult{desirable operating point}). %

Section~\ref{section:conclusion} concludes the paper.

\subsection*{Notation and matrix definitions}
For a vector $x = \begin{pmatrix}
x_1 & \cdots & x_k
\end{pmatrix}\T$ we denote 
\begin{align*}
[x]:=\on{diag} (x_1,\dots,x_k). 
\end{align*}
We let $\mb 1$ and $\mb 0$ denote the all-ones and all-zeros vector, respectively, and let $I$ denote the identity matrix.
We let their dimensions follow from their context.
All vector and matrix inequalities are taken to be element-wise. 
We write $x \lneqq y$ if $x\le y$ and $x\neq y$.
We let $\|x\|_p$ denote the $p$-norm of $x\in\RR^k$.

We define $\boldsymbol n := \{1,\dots,n\}$.
All matrices are square $n\times n$ matrices, unless stated otherwise.
The submatrix of a matrix $A$ with rows and columns indexed by $\alpha,\beta\subset\boldsymbol n$, respectively, is denoted by $A_{[\alpha,\beta]}$. The same notation $v_{[\alpha]}$ is used for subvectors of a vector $v$.
We let $\alpha\comp$ denote the set-theoretic complement of $\alpha$ with respect to $\boldsymbol n$. 
For a set $S$, the notation $\inter S$, $\cl S$, $\partial S$ and $\conv S$ is used for the interior, closure, boundary and convex hull of $S$, respectively.

We list some classical definitions from matrix theory.
\begin{definition}[\cite{fiedler1986special}, Ch. 5]\label{definition:Z-matrix}
A matrix $A$ is a \emph{Z-matrix} if $A_{ij}\le 0$ for all $i\neq j$.
\end{definition}
\begin{definition}[\cite{fiedler1986special}, Thm. 5.3]\label{definition:M-matrix}
A Z-matrix is an \emph{M-matrix} if all its eigenvalues have nonnegative real part.
\end{definition}
\begin{definition}[\cite{fiedler1986special}, pp. 71]\label{definition:irreducible matrix}
A matrix $A$ is \emph{irreducible} if for every nonempty set $\alpha\subsetneqq \boldsymbol n$ we have $A_{[\alpha,\alpha\comp]}\neq 0$.
\end{definition}

\begin{definition}
The Schur complement of $M = \begin{pmatrix}
A & B \\ C& D 
\end{pmatrix}$ with respect to the principal submatrix $D$ is denoted by 
\begin{align*}
M/D := A - BD\inv C.
\end{align*}
\end{definition}

\section{Summary of Part I}\label{section:summary of part 1}
We summarize the models, definitions and main contributions of Part I of this paper. %

\subsection{The DC power grid model}
Throughout this paper we consider DC power grids with constant-power loads at steady state, and subsequently study their power flow equations.
Since dynamic components do not contribute to the power flow equations at steady state, we model such power grids as resistive circuits. We refer to \cite{schaft2010characterization,schaft2019flow} for a detailed discussion on resistive circuits.

For a given power grid with $n$ loads and $m$ sources, we let $Y\in\RR^{(n+m)\times (n+m)}$ denote its Kirchhoff matrix, mapping the voltage potentials $V$ at the nodes to the currents $\mc I$ injected at the nodes. It is partitioned as
\begin{align}
Y = \begin{pmatrix}\label{eqn:partition of Y}
Y_{LL} & Y_{LS} \\ Y_{SL} & Y_{SS}
\end{pmatrix}
\end{align}
according to whether nodes are loads ($L$) and sources ($S$).
Following the same partition we let
\begin{align*}
V = \begin{pmatrix}
V_L \\ V_S
\end{pmatrix}\in\RR^{n + m};\quad \mc I = \begin{pmatrix}
\mc I_L \\ \mc I_S
\end{pmatrix}\in\RR^{n + m}
\end{align*}
denote the voltage potentials and the currents at the loads and sources, respectively.
All voltage potentials are assumed to be positive (\ie, $V>\mb 0$).
The power $P\in\RR^{n + m}$ which the nodes supply to the grid is given by
\begin{align*}%
\begin{pmatrix}
P_L \\ P_S
\end{pmatrix} = \begin{bmatrix}
\begin{pmatrix}
V_L \\ V_S
\end{pmatrix}
\end{bmatrix}\begin{pmatrix}
\mc I_L \\ \mc I_S
\end{pmatrix} = \begin{bmatrix}
\begin{pmatrix}
V_L \\ V_S
\end{pmatrix}
\end{bmatrix}\begin{pmatrix}
Y_{LL} & Y_{LS} \\ Y_{SL} & Y_{SS}
\end{pmatrix}\begin{pmatrix}
V_L \\ V_S
\end{pmatrix}
\end{align*}
as follows from Kichhoff's and Ohm's laws.
The \emph{total dissipated power} in the lines equals 
\begin{align*}
R(V_L,V_S) := V\T Y V \ge 0.
\end{align*}

\subsection{Feasible power demands}
Throughout this paper we consider $V_L$ as a variable of the system, whereas $Y$ and $V_S>\mb 0$ are known (fixed) para-meters. 
We therefore write $\mc I_L = \mc I_L(V_L)$ and $P_L = P_L(V_L)$.
\begin{definition}
We define the \emph{source-injected currents} by 
\begin{align}\label{eqn:source-injected currents}
\mc I_L^*:=-Y_{LS}V_S = -\mc I_L(\mb 0),
\end{align}
which correspond to the currents injected into the loads when $V_L=\mb 0$.
\end{definition}
\begin{proposition}[{\cite[Prop. 3.6]{schaft2010characterization}}]
The \emph{open-circuit voltages} $V_L^*$ are the unique voltage potentials at the loads so that $\mc I_L(V_L) = \mb 0$, given by
\begin{align}\label{eqn:open-circuit voltages}
V_L^* := -Y_{LL}\inv* Y_{LS}V_S = Y_{LL}\inv* \mc I_L^*.
\end{align}
\end{proposition}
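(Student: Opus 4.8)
The plan is to read off $\mc I_L$ as an affine function of $V_L$ from the block partition \eqref{eqn:partition of Y}, reduce the condition $\mc I_L(V_L)=\mb 0$ to a single square linear system, and then argue that the coefficient matrix $Y_{LL}$ is invertible, which delivers existence and uniqueness simultaneously. Concretely, expanding the top block of $\mc I = YV$ gives
\begin{align*}
\mc I_L(V_L) = Y_{LL}V_L + Y_{LS}V_S,
\end{align*}
so $\mc I_L(V_L)=\mb 0$ is equivalent to $Y_{LL}V_L = -Y_{LS}V_S$. By the definition \eqref{eqn:source-injected currents} the right-hand side is precisely $\mc I_L^*$, and the problem reduces to solving the single system $Y_{LL}V_L = \mc I_L^*$.

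The crux is to show that $Y_{LL}$ is invertible. I would argue this from the structure of the Kirchhoff matrix: $Y$ is a symmetric, positive semidefinite, weighted-Laplacian-type matrix, and for a connected grid its kernel is spanned by $\mb 1$. Since there is at least one source, the load index set is a proper subset of the nodes, so $Y_{LL}$ is a proper principal submatrix; such a submatrix of a connected-graph Laplacian is positive definite, equivalently a nonsingular M-matrix in the terminology recalled above. Hence $Y_{LL}\inv$ exists, the system has the unique solution $V_L^* = Y_{LL}\inv \mc I_L^* = -Y_{LL}\inv Y_{LS}V_S$, and this is exactly \eqref{eqn:open-circuit voltages}.

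I expect the only real obstacle to be the invertibility of $Y_{LL}$; everything else is elementary manipulation of the partition. Its precise justification depends on the modeling choices of Part~I — in particular whether shunt conductances to ground are present (in which case $Y$ may itself be positive definite) and whether connectivity is assumed — so in the writeup I would invoke whichever structural property of $Y$ has already been established there rather than re-deriving it from scratch.
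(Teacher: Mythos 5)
Your proposal is correct and follows the standard argument behind this cited result: expanding the top block of $\mc I = YV$ reduces $\mc I_L(V_L)=\mb 0$ to the square system $Y_{LL}V_L = \mc I_L^*$, and invertibility of $Y_{LL}$ (a proper principal submatrix of an irreducible Kirchhoff/Laplacian matrix, hence a nonsingular M-matrix, a fact the paper relies on throughout) gives existence and uniqueness at once. The paper states this proposition without proof, citing \cite{schaft2010characterization}, and your argument matches the one given there.
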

\begin{lemma}[Lemma~\PIref{lemma:positive open-circuit voltages}]\label{lemma:positive open-circuit voltages}
The source-injected currents $\mc I_L^*$ are nonnegative and not all zero (\ie, $\mc I_L^*\gneqq \mb 0$) and the open-circuit voltages $V_L^*$ are positive (\ie, $V_L^*>\mb 0$).
\end{lemma}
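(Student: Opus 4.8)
The plan is to derive everything from the structure of the Kirchhoff matrix $Y$, which by the standing assumptions of Part~I is a symmetric, irreducible Z-matrix, and which is positive semidefinite because the dissipated power $R(V_L,V_S)=V\T Y V\ge 0$ is nonnegative. In particular $Y$ is a symmetric M-matrix, and I will exploit that its off-diagonal blocks and principal submatrices inherit the corresponding sign patterns.

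First I would settle $\mc I_L^*\gneqq\mb 0$. Since $L$ and $S$ are disjoint, every entry of the block $Y_{LS}$ is an off-diagonal entry of the Z-matrix $Y$, hence $-Y_{LS}\ge 0$ entry-wise. Irreducibility applied to the splitting $\alpha=L$, $\alpha\comp=S$ gives $Y_{LS}=Y_{[L,S]}\neq 0$, so $-Y_{LS}$ is a nonnegative, nonzero matrix. As $V_S>\mb 0$, the product $\mc I_L^*=-Y_{LS}V_S$ is nonnegative, and it is nonzero because any nonzero row of $-Y_{LS}$ paired with the strictly positive $V_S$ yields a strictly positive entry. Hence $\mc I_L^*\gneqq\mb 0$.

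Next I would show $V_L^*>\mb 0$. The preliminary step is that $Y_{LL}$ is a nonsingular M-matrix: it is a proper principal submatrix of the irreducible symmetric M-matrix $Y$, and such submatrices are nonsingular M-matrices. Self-containedly, if $Y_{LL}x=\mb 0$ then the vector $\hat x$ that equals $x$ on $L$ and $\mb 0$ on $S$ satisfies $\hat x\T Y\hat x=x\T Y_{LL}x=0$, so $\hat x\in\ker Y$; by Perron--Frobenius theory for the irreducible $Y$, $\ker Y$ is either $\{\mb 0\}$ or spanned by a strictly positive vector, and since $\hat x$ vanishes on the nonempty source set it must be $\mb 0$, whence $x=\mb 0$. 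Consequently $Y_{LL}\inv$ exists with $Y_{LL}\inv\ge 0$, and therefore $V_L^*=Y_{LL}\inv\mc I_L^*\ge\mb 0$.

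The genuine obstacle is the \emph{strict} inequality, since $Y_{LL}$ need not be irreducible (two load nodes joined only through a source give a diagonal, hence reducible, $Y_{LL}$), so $Y_{LL}\inv$ need not be strictly positive and nonnegativity alone does not upgrade to positivity. I would resolve this using connectivity. As $Y_{LL}$ is symmetric, reducibility means it is permutation-similar to a block-diagonal matrix $\bigoplus_c B_c$ whose blocks $B_c$ are the connected components of the load-induced subgraph; each $B_c$ is an irreducible nonsingular M-matrix, so $B_c\inv>0$. Writing $V_L^*$ and $\mc I_L^*$ in the same block form gives $(V_L^*)_c=B_c\inv(\mc I_L^*)_c$. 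The key point is that $(\mc I_L^*)_c\gneqq\mb 0$ for every $c$: if the loads in component $c$ were adjacent to no source, then $c$ would be joined to neither a further load (by maximality of the component) nor a source, i.e. $Y_{[c,\,\boldsymbol n\setminus c]}=0$ for a nonempty proper index set, contradicting the irreducibility of $Y$; and a load $i\in c$ adjacent to a source has $(\mc I_L^*)_i>0$ by the second paragraph. Since a strictly positive matrix times a nonnegative nonzero vector is strictly positive, $(V_L^*)_c>\mb 0$ for each $c$, whence $V_L^*>\mb 0$. The same conclusion can alternatively be obtained from a discrete minimum principle applied to the identity $Y_{LL}V_L^*=\mc I_L^*$: evaluating the row of a load $k$ minimizing $V_L^*$ and using the diagonal dominance of $Y$ together with $V_S>\mb 0$ shows that a nonpositive minimum would force an entire source-free load component, again contradicting irreducibility.
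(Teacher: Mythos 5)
Your proof is correct, and its first half (the sign pattern of $-Y_{LS}$, irreducibility of $Y$ forcing $Y_{LS}\neq 0$, and $V_S>\mb 0$ giving $\mc I_L^*\gneqq\mb 0$) is exactly the argument the paper uses. Where you diverge is in the second half: the paper works under the standing assumption, recalled repeatedly in Part~II (e.g.\ in the proofs of Corollary~\ref{corollary:domination open-circuit voltages} and Lemma~\ref{lemma:nonnegative cone in M}), that $Y_{LL}$ is an \emph{irreducible} nonsingular M-matrix, so that $Y_{LL}\inv>0$ follows in one step from \cite[Thm.~5.12]{fiedler1986special} and $V_L^*=Y_{LL}\inv\mc I_L^*>\mb 0$ is immediate from $\mc I_L^*\gneqq\mb 0$. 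You instead prove nonsingularity of $Y_{LL}$ from positive semidefiniteness and irreducibility of the full $Y$ (a clean and valid argument: $x\T Y_{LL}x=\hat x\T Y\hat x$, the kernel of $Y$ is spanned by a positive vector, and $\hat x$ vanishes on the nonempty source set), and then you handle a possibly \emph{reducible} $Y_{LL}$ by decomposing the load-induced subgraph into connected components, checking that irreducibility of $Y$ forces each component to see at least one source, and applying the strict positivity of each block inverse componentwise. This buys genuine generality \textemdash\ your version of the lemma holds without assuming the load subgraph is connected, which, as your two-loads-through-a-source example shows, does not follow from connectivity of the whole network \textemdash\ at the cost of a longer argument than the paper needs under its hypotheses. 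Both routes are sound; just be aware that in the paper's setting the component decomposition is vacuous (there is a single component) and the one-line Fiedler argument suffices.
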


The power injected at the loads for every $V_L$ is given by %
\begin{align}\label{eqn:power at the nodes}
P_L(V_L) = [V_L]Y_{LL} (V_L - V_L^*).
\end{align}
The vector $V_L^*$ is the unique positive vector such that $P_L(V_L) = \mb 0$. %

In this paper we consider \emph{constant-power loads}, which is to say that each load demands a fixed quantity of power from the power grid.
We let $P_c\in\RR^n$ denote the vector of constant power demands at the loads.
Note that we do not impose any sign restrictions on $P_c$ and that, in principle, load nodes could also demand negative power, in which case the loads provide constant power to the grid.
The \emph{DC power flow equations for constant-power loads} are given by
\begin{align}\label{eqn:dc power flow equation}
[V_L]Y_{LL} (V_L - V_L^*) + P_c = \mb 0.
\end{align}

\begin{definition}\label{definition:feasible}
Given $Y$ and $V_S$, we say that the power flow equations \eqref{eqn:dc power flow equation} are \emph{feasible} for a vector of constant power demands $P_c$ if there exists a vector of voltage potentials $V_L$ %
which satisfies \eqref{eqn:dc power flow equation}. We say that $V_L$ is an \emph{operating point} associated to $P_c$ if $V_L$ satisfies \eqref{eqn:dc power flow equation} for $P_c$. 
\end{definition}

Recall that throughout Definition~\ref{definition:feasible} we require that ${V_S>\mb 0}$ and $V_L>\mb 0$. %
\begin{definition}
We say that a vector of power demands $P_c$ is \emph{feasible} if \eqref{eqn:dc power flow equation} is feasible for $P_c$.
The set of \emph{feasible power demands} is given by
\begin{align*}
\mc F :=& \set{P_c\in\RR^n}{\text{Eq. \eqref{eqn:dc power flow equation} is feasible for $P_c$}}\\
=&\set{P_c\in\RR^n}{\exists V_L>\mb 0 \text{ such that \eqref{eqn:dc power flow equation} holds}}.%
\end{align*}
\end{definition}

Note that each vector $V_L$ of voltage potentials at the loads is associated by \eqref{eqn:dc power flow equation} to a vector of constant power demands $P_c$, %
given by
\begin{align}\label{eqn:definition of f}
P_c(V_L) = [V_L]Y_{LL}(V_L^*-V_L).
\end{align}
Of particular interest is the feasible power demand which maximizes the total power demanded by all loads in the power grid.
\begin{definition}
For a feasible power demand $P_c\in \mc F$, the \emph{total feasible power demand} is $\mb 1\T P_c$, the sum of the power demands at the loads.
\end{definition}
\begin{definition}\label{definition:maximizing feasible power demand}
A \emph{maximizing feasible power demand} is a feasible power demand $P_{\text{max}}\in \mc F$ which maximizes the total feasible power demand. Thus for all $P_c\in \mc F$ it satisfies
\begin{align}\label{eqn:power flow necessary condition}
\mb 1\T P_c \le \mb 1\T P_{\text{max}}.
\end{align}
\end{definition}
\begin{lemma}[Lemma~\PIref{lemma:maximizing power demand}]\label{lemma:maximizing power demand}
There is a unique maximizing feasible power demand $P_{\text{max}}\in \mc F$, given by
\begin{align}\label{eqn:max total power demand}
P_{\text{max}} = \tfrac 1 4 [V_L^*]\mc I_L^* \gneqq \mb 0.
\end{align}
The unique operating point corresponding to $P_{\text{max}}$ is $\tfrac 1 2 V_L^*$.
\end{lemma}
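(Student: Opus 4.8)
The plan is to recast the optimization over the feasible set as an unconstrained optimization over voltages. By Definition~\ref{definition:feasible} and \eqref{eqn:definition of f}, a demand $P_c$ is feasible precisely when $P_c = P_c(V_L)$ for some $V_L>\mb 0$, so $\mc F = \set{P_c(V_L)}{V_L > \mb 0}$. Maximizing the total feasible power demand $\mb 1\T P_c$ over $\mc F$ is therefore the same as maximizing the scalar function $g(V_L) := \mb 1\T P_c(V_L)$ over the open positive orthant. First I would compute $g$ explicitly: using $\mb 1\T [V_L] = V_L\T$ and the identity $Y_{LL}V_L^* = \mc I_L^*$ from \eqref{eqn:open-circuit voltages}, one obtains
\begin{align*}
g(V_L) = V_L\T Y_{LL}(V_L^* - V_L) = (\mc I_L^*)\T V_L - V_L\T Y_{LL} V_L.
\end{align*}

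Next I would argue that $g$ is strictly concave. Since $Y$ is the symmetric Kirchhoff matrix of a resistive circuit, its load block $Y_{LL}$ is symmetric, and it is an invertible M-matrix (its inverse already appears in \eqref{eqn:open-circuit voltages}); a symmetric invertible M-matrix has all eigenvalues positive by Definition~\ref{definition:M-matrix}, hence is positive definite, so the Hessian $-2Y_{LL}$ of $g$ is negative definite. I would cite the corresponding positive-definiteness statement from Part~I for this step. Strict concavity gives a unique unconstrained maximizer, found from $\nabla g = \mc I_L^* - 2 Y_{LL} V_L = \mb 0$, namely $V_L = \tfrac 1 2 Y_{LL}\inv \mc I_L^* = \tfrac 1 2 V_L^*$. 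Because $V_L^* > \mb 0$ by Lemma~\ref{lemma:positive open-circuit voltages}, this point lies in the admissible region $V_L>\mb 0$, so it is also the unique maximizer over the open positive orthant.

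Substituting $V_L = \tfrac 1 2 V_L^*$ into \eqref{eqn:definition of f} yields
\begin{align*}
P_{\text{max}} = P_c(\tfrac 1 2 V_L^*) = \tfrac 1 4 [V_L^*] Y_{LL} V_L^* = \tfrac 1 4 [V_L^*] \mc I_L^*,
\end{align*}
and since $V_L^* > \mb 0$ while $\mc I_L^* \gneqq \mb 0$ (Lemma~\ref{lemma:positive open-circuit voltages}), the product satisfies $P_{\text{max}} \gneqq \mb 0$. Uniqueness of the maximizing demand then follows from strict concavity: any feasible $P_c = P_c(V_L)$ with $\mb 1\T P_c = \mb 1\T P_{\text{max}}$ forces $g(V_L) = g(\tfrac 1 2 V_L^*)$, hence $V_L = \tfrac 1 2 V_L^*$ and $P_c = P_{\text{max}}$. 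The same estimate yields the operating-point claim: any operating point $V_L>\mb 0$ associated to $P_{\text{max}}$ satisfies $P_c(V_L) = P_{\text{max}}$, so in particular it attains the maximal total demand, and strict concavity again forces $V_L = \tfrac 1 2 V_L^*$.

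The only nontrivial ingredient is the positive definiteness of $Y_{LL}$ (equivalently, the strict concavity of $g$), which is exactly what makes both the interior maximizer and all of the uniqueness conclusions work; everything else reduces to the short computation above. Accordingly, the main obstacle I would guard against is ensuring this positive-definiteness fact is cleanly available from Part~I, or else deriving it from symmetry together with invertibility and the M-matrix eigenvalue condition of Definition~\ref{definition:M-matrix}.
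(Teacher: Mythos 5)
Your proof is correct, and since this lemma is only quoted in Part~II (its proof lives in Part~I), the natural comparison is with the argument the paper's framework suggests: maximizing $\mb 1\T P_c$ over $\mc F$ via the quadratic $\mb 1\T P_c(V_L) = (\mc I_L^*)\T V_L - V_L\T Y_{LL} V_L$, whose strict concavity rests on $Y_{LL}$ being a symmetric nonsingular M-matrix and hence positive definite --- exactly the fact the paper itself invokes when it takes $h(\mb 1)=Y_{LL}$ to be positive definite so that $H_{\mb 1}$ is a valid supporting half-space with point of support $P_c(\varphi(\mb 1))=P_{\mathrm{max}}$ at $\varphi(\mb 1)=\tfrac12 V_L^*$. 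Your completing-the-square/concavity argument is the same mechanism specialized to $\lambda=\mb 1$, and all the ingredients you flag (positive definiteness of $Y_{LL}$, $V_L^*>\mb 0$, $\mc I_L^*\gneqq\mb 0$) are indeed available from Part~I, so there is no gap.
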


The following definitions are used to characterize the set $\mc F$ of feasible power demands.
We define the notation 
\begin{align}\label{eqn:definition of h}
h(\lambda):=\tfrac 1 2 ([\lambda]Y_{LL} + Y_{LL}[\lambda]),
\end{align} along with the sets 
\begin{align*}
\Lambda &:= \set{\lambda\in\RR^n}{h(\lambda)\text{ is positive definite}};\\
\Lambda_1 &:= \set{\lambda\in\RR^n}{h(\lambda)\text{ is positive definite}, \|\lambda\|_1 = 1}.
\end{align*}
For each $\lambda\in \Lambda$ we define the norm, map, and half-space
\begin{align}
\|x\|_{h(\lambda)} &:= \sqrt{x\T h(\lambda) x}\label{eqn:definition of norm};\\
\varphi (\lambda) &:= \tfrac 1 2 h(\lambda)\inv [\lambda]\mc I_L^*\label{eqn:definition of phi};\\
H_{\lambda} &:= \set{y}{\lambda\T y \le \|\varphi(\lambda)\|_{h(\lambda)}^2},\label{eqn:definition of H}
\end{align}
respectively. 
We note that $h(\mb 1) = Y_{LL}$, $\varphi(\mb 1) = \tfrac 1 2 V_L^*$, $P_c(\varphi(\mb 1)) = P_{\mathrm{max}}$, and that \eqref{eqn:power flow necessary condition} is equivalent to the inclusion 
\begin{align}\label{eqn:Pmax half-space}
\mc F \subset H_{\mb 1} = \set{y}{\mb 1 \T y \le \mb 1 \T P_{\mathrm{max}}}.
\end{align}

\begin{theorem}[\mainresult{convexity of F} \textendash\ Theorem~\PIref{theorem:convexity of F}]\label{theorem:convexity of F}
The set $\mc F$ is closed, convex and is the intersection over all $\lambda\in\Lambda_1$ of the half-spaces $H_\lambda$. \Ie
\begin{align*}
\mc F = \cl{\conv{\mc F}} =  \bigcap_{\lambda\in\Lambda_1} H_\lambda.
\end{align*}
\end{theorem}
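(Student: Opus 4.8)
The plan is to reduce everything to the single set identity $\mc F=\bigcap_{\lambda\in\Lambda_1}H_\lambda$: since each $H_\lambda$ is a closed half-space, the intersection is automatically closed and convex, so closedness, convexity, and the equality $\mc F=\cl{\conv{\mc F}}$ all follow at once from the two inclusions. As a preliminary I would record the scale-invariance $H_{c\lambda}=H_\lambda$ for every $c>0$: replacing $\lambda$ by $c\lambda$ multiplies $h(\lambda)$, the vector $[\lambda]\mc I_L^*$, and the right-hand side of the defining inequality all by $c$, while leaving $\varphi(\lambda)$ unchanged. Hence $\bigcap_{\lambda\in\Lambda}H_\lambda=\bigcap_{\lambda\in\Lambda_1}H_\lambda$, and it suffices to work with the whole cone $\Lambda$.

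The outer inclusion $\mc F\subseteq\bigcap_{\lambda\in\Lambda}H_\lambda$ rests on a completion of squares. For any $\lambda\in\Lambda$ and any $V_L$, using that $Y_{LL}$ is symmetric and $Y_{LL}V_L^*=\mc I_L^*$,
\begin{align*}
\lambda\T P_c(V_L) &= V_L\T[\lambda]\mc I_L^* - V_L\T h(\lambda)V_L \\
&= \|\varphi(\lambda)\|_{h(\lambda)}^2 - \|V_L-\varphi(\lambda)\|_{h(\lambda)}^2 \\
&\le \|\varphi(\lambda)\|_{h(\lambda)}^2,
\end{align*}
where the middle step inserts $\varphi(\lambda)=\tfrac12 h(\lambda)\inv[\lambda]\mc I_L^*$ and the inequality uses $h(\lambda)\succ 0$. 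Thus every feasible $P_c$ lies in $H_\lambda$ for all $\lambda\in\Lambda$. The identity also shows that $\varphi(\lambda)$ is the unique $V_L$ at which $\lambda\T P_c(V_L)$ attains the bound $\|\varphi(\lambda)\|_{h(\lambda)}^2$; specializing to $\lambda=\mb 1$ recovers $\varphi(\mb 1)=\tfrac12 V_L^*$ and $P_c(\varphi(\mb 1))=P_{\text{max}}$, consistent with \eqref{eqn:Pmax half-space}.

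The reverse inclusion $\bigcap_{\lambda\in\Lambda_1}H_\lambda\subseteq\mc F$ is the hard part and carries all the content. Given $P_c$ satisfying every half-space bound, I must produce $V_L>\mb 0$ with $P_c(V_L)=P_c$. The plan is a continuation argument along the segment $P(t)=(1-t)P_{\text{max}}+tP_c$, which lies in the convex set $\bigcap_{\lambda\in\Lambda_1}H_\lambda$ because both endpoints do (the endpoint $P_{\text{max}}$ by the outer inclusion). Starting from the operating point $\tfrac12 V_L^*$ at $t=0$, I would track a solution $V_L(t)$ of $P_c(V_L)=P(t)$ via the implicit function theorem, which applies as long as the Jacobian $[Y_{LL}(V_L^*-V_L)]-[V_L]Y_{LL}$ is nonsingular and $V_L(t)>\mb 0$. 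The substance is to show the continuation cannot be obstructed before $t=1$: one rules out a coordinate of $V_L(t)$ tending to $0$ or $\infty$ (a priori bounds on the high-voltage branch), and one shows that the only remaining failure mode, a singular Jacobian, forces $P(t)$ to saturate some constraint $\lambda\T y=\|\varphi(\lambda)\|_{h(\lambda)}^2$. Because the affine map $t\mapsto\lambda\T P(t)$ stays $\le\|\varphi(\lambda)\|_{h(\lambda)}^2$ on $[0,1]$, meeting this bound at an interior $t$ would force the map to be constant and hence $P_{\text{max}}\in\partial H_\lambda$, a possibility one excludes; so the obstruction can occur only at $t=1$, and $V_L(1)$ is the desired operating point for $P_c$.

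I expect the main obstacle to be precisely the identification of the degeneracy set: proving that singularity of the Jacobian along the high-voltage branch corresponds to a left null vector $\lambda$ for which $h(\lambda)$ is singular and $V_L=\varphi(\lambda)$, so that the saturated constraint is genuinely one of the $H_\lambda$ in the family. This is the link between the Jacobian condition and the positive-definiteness cone $\Lambda$, and it is what makes this half-space family exactly the correct one; the a priori compactness bounds keeping $V_L(t)$ in the positive orthant are the second, more technical, ingredient. An alternative to the homotopy is to invoke \mainresult{parametrization of D} directly, which supplies an operating point for each interior feasible power demand and reduces the reverse inclusion to a boundary argument.
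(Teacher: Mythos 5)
Your outer inclusion $\mc F\subseteq\bigcap_{\lambda\in\Lambda_1}H_\lambda$ is correct and complete: the completion-of-squares identity $\lambda\T P_c(V_L)=\|\varphi(\lambda)\|_{h(\lambda)}^2-\|V_L-\varphi(\lambda)\|_{h(\lambda)}^2$ is exactly the mechanism the paper relies on (it is the Part~I lemma invoked throughout Part~II to decide when equality holds in $H_\lambda$), and the scale-invariance $H_{c\lambda}=H_\lambda$ is right. The reverse inclusion, however, is only a plan, and the plan as written fails at its very first step. You base the homotopy at $P_{\mathrm{max}}$ with starting operating point $\tfrac12V_L^*=\varphi(\mb 1)$; but $\varphi(\mb 1)\in\partial\mc D$, so by Lemma~\ref{lemma:D m-matrix} the matrix $-\pdd{P_c}{V_L}(\tfrac12V_L^*)$ is a \emph{singular} M-matrix and the implicit function theorem does not apply at $t=0$: the continuation cannot start. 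Relatedly, your exclusion step is false at this base point: $P_{\mathrm{max}}$ saturates $H_{\mb 1}$, since $\mb 1\T P_{\mathrm{max}}=\|\varphi(\mb 1)\|_{h(\mb 1)}^2$ by \eqref{eqn:Pmax half-space}, so ``meeting the bound at an interior $t$ forces $P_{\mathrm{max}}\in\partial H_\lambda$, which one excludes'' cannot be carried out for $\lambda\propto\mb 1$. The correct base point is $\tilde P_c=\mb 0$ with operating point $V_L^*$, where $-\pdd{P_c}{V_L}(V_L^*)=[V_L^*]Y_{LL}$ is a nonsingular M-matrix and $\lambda\T\mb 0=0<\|\varphi(\lambda)\|_{h(\lambda)}^2$ holds strictly for every $\lambda\in\Lambda_1$; this is precisely the initial value problem of Theorem~\ref{theorem:one-to-one correspondence}.

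Beyond the base-point issue, the two ingredients you yourself flag as obstacles are the entire content of the reverse inclusion and are not supplied: the a priori bounds confining $V_L(t)$ to a compact subset of the positive orthant, and the identification of the degeneracy set. On the latter you also misstate the target: at a singular point of the continuation the left Perron vector $\lambda>\mb 0$ of $-\pdd{P_c}{V_L}(V_L)$ satisfies $2h(\lambda)V_L=[\lambda]\mc I_L^*$, and what must be proved is that $h(\lambda)$ is \emph{positive definite}, so that $\lambda\in\Lambda_1$, $V_L=\varphi(\lambda)$, and the saturated constraint is genuinely one of the $H_\lambda$ in the family; if $h(\lambda)$ were singular, as you write, then $\varphi(\lambda)$ and $H_\lambda$ would not even be defined. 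Finally, note that the paper's own route runs in the opposite order: Part~I first proves that $\mc F$ is closed and convex (via the parametrization of $\mc D$ and the one-to-one correspondence with $\mc F$), and only then obtains the half-space representation by identifying all supporting half-spaces of the closed convex set $\mc F$, with $P_c(\varphi(\lambda))$ the unique point of support of $H_\lambda$. Your inversion of that order is viable in principle, but it concentrates all of the difficulty in the continuation argument that you have not completed.
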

In Part~I it was shown that the necessary condition in \cite{barabanov2016} for feasibility of a power demand is also sufficient, and can be sharpened for power demands which are feasible under small perturbation, by which we mean that $\tilde P_c$ is feasible and does not lie on the boundary of $\mc F$ (\ie, $\tilde P_c\in\inter{\mc F}$).
\begin{theorem}[\mainresult{necessary and sufficient condition} \textendash\ Theorem~\PIref{theorem:necessary and sufficient condition}]\label{theorem:necessary and sufficient condition}
A vector $\tilde P_c$ of power demands is feasible if and only if there does not exist a positive vector $\nu\in\RR^n$ such that the $(n+1) \times (n+1)$ matrix
\begin{align}\label{eqn:necessary and sufficient condition}
\begin{pmatrix}
[\nu] Y_{LL} + Y_{LL}[\nu] & [\nu] \mc I_L^* \\ ([\nu] \mc I_L^*)\T & 2 \nu\T \tilde P_c
\end{pmatrix} = 2 \begin{pmatrix}
h(\nu) & \tfrac 1 2 [\nu] \mc I_L^* \\ \tfrac 1 2 ([\nu] \mc I_L^*)\T & \nu\T \tilde P_c
\end{pmatrix}
\end{align}
is positive definite.
Similarly, $\tilde P_c$ is feasible under small perturbation (\ie, $\tilde P_c\in\inter{\mc F}$) if and only if there does not exist a positive vector $\nu\in\RR^n$ such that \eqref{eqn:necessary and sufficient condition} is positive semi-definite.
\end{theorem}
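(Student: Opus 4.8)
The plan is to reduce the linear matrix inequality in \eqref{eqn:necessary and sufficient condition} to the half-space characterization $\mc F=\bigcap_{\lambda\in\Lambda_1}H_\lambda$ of Theorem~\ref{theorem:convexity of F} by a single Schur-complement computation. Write $M(\nu)$ for the matrix in \eqref{eqn:necessary and sufficient condition}. Its leading $n\times n$ block is $2h(\nu)$, whose diagonal entries are $\nu_i(Y_{LL})_{ii}$; since $Y_{LL}$ has positive diagonal, positive definiteness of $h(\nu)$ already forces $\nu>\mb 0$. Hence for the first (definite) statement the sign restriction on $\nu$ is essentially automatic: if $M(\nu)$ is positive definite then so is its leading block $2h(\nu)$, i.e. $\nu\in\Lambda$, and in particular $\nu>\mb 0$.

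Next I would carry out the Schur complement. For $\nu\in\Lambda$ the block $h(\nu)$ is invertible, so $M(\nu)$ is positive definite if and only if its Schur complement with respect to the leading block is positive; using $\varphi(\nu)=\tfrac12 h(\nu)\inv[\nu]\mc I_L^*$ one computes this Schur complement to be
\begin{align*}
2\nu\T\tilde P_c-([\nu]\mc I_L^*)\T(2h(\nu))\inv[\nu]\mc I_L^*=2\bigl(\nu\T\tilde P_c-\|\varphi(\nu)\|_{h(\nu)}^2\bigr).
\end{align*}
Thus $M(\nu)$ is positive definite exactly when $\nu\T\tilde P_c>\|\varphi(\nu)\|_{h(\nu)}^2$, that is, exactly when $\tilde P_c\notin H_\nu$. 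Since $H_\nu$ is invariant under positive scaling of $\nu$ (because $h(t\nu)=t\,h(\nu)$, $\varphi(t\nu)=\varphi(\nu)$ and $\|\varphi(t\nu)\|_{h(t\nu)}^2=t\|\varphi(\nu)\|_{h(\nu)}^2$ for $t>0$), normalizing to $\mb 1\T\nu=\|\nu\|_1=1$ gives $\{H_\nu:\nu\in\Lambda\}=\{H_\lambda:\lambda\in\Lambda_1\}$. Combining these facts, the existence of a positive $\nu$ with $M(\nu)$ positive definite is equivalent to the existence of some $\lambda\in\Lambda_1$ with $\tilde P_c\notin H_\lambda$, which by Theorem~\ref{theorem:convexity of F} is equivalent to $\tilde P_c\notin\mc F$. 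This establishes the first assertion.

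For the perturbed statement I would repeat the argument with ``positive definite'' replaced by ``positive semi-definite''. For $\nu\in\Lambda$ the same Schur-complement identity shows $M(\nu)$ is positive semi-definite iff $\nu\T\tilde P_c\ge\|\varphi(\nu)\|_{h(\nu)}^2$, i.e. iff $\tilde P_c\notin\inter{H_\nu}$; since $\mc F\subset H_\nu$ implies $\inter{\mc F}\subset\inter{H_\nu}$, any such semi-definite certificate rules out $\tilde P_c\in\inter{\mc F}$. Conversely, a point $\tilde P_c\notin\inter{\mc F}$ is either infeasible (handled by the first part) or lies on $\partial\mc F$, in which case the supporting–half-space structure of Theorem~\ref{theorem:convexity of F} supplies a supporting half-space $H_\nu$ with $\nu\T\tilde P_c=\|\varphi(\nu)\|_{h(\nu)}^2$, yielding a singular positive semi-definite $M(\nu)$.

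The main obstacle is exactly this last step: the supporting normal at a boundary point of $\mc F$ may correspond to a $\nu$ on $\partial\Lambda$, where $h(\nu)$ is positive semi-definite but singular and the clean Schur-complement reduction fails. There $M(\nu)$ can be positive semi-definite only if $[\nu]\mc I_L^*\in\im h(\nu)$, the reduction must use a pseudo-inverse (generalized) Schur complement, and the constraint $\nu>\mb 0$ is no longer forced by definiteness of the block. I would resolve this by a limiting argument, approximating such a degenerate $\nu$ from within $\Lambda$ and passing to the limit, checking that the semi-definite certificate persists while no spurious certificate arises for an interior point. Verifying this continuity, and that the exposing normals can be taken strictly positive, is where the real work lies.
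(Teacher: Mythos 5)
Your Schur-complement reduction is exactly the mechanism the paper relies on (Part~I invokes the Haynsworth inertia additivity formula, which is this computation), and your proof of the positive-definite statement is complete: $M(\nu)\succ 0$ forces $h(\nu)\succ 0$, hence $\nu\in\Lambda$ and $\nu>\mb 0$, and the identity $M(\nu)/2h(\nu)=2\bigl(\nu\T\tilde P_c-\|\varphi(\nu)\|_{h(\nu)}^2\bigr)$ together with Theorem~\ref{theorem:convexity of F} finishes it. For the semidefinite statement, note first that the difficulty you raise in the converse direction does not actually arise: by Theorem~\PIref{theorem:supporting half-spaces of F} every supporting half-space of $\mc F$ is of the form $H_\lambda$ with $\lambda\in\Lambda_1$, and every $\tilde P_c\in\partial\mc F$ equals $P_c(\varphi(\lambda))$ for such a $\lambda$, where $\lambda\T\tilde P_c=\|\varphi(\lambda)\|_{h(\lambda)}^2$ holds with equality (Lemma~\PIref{lemma:lambda f}); so the exposing normal can always be taken with $h(\lambda)$ nonsingular and yields a singular positive semidefinite $M(\lambda)$ directly.

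The genuine gap is in the forward direction of the semidefinite statement, and the repair you sketch does not close it. If $\nu>\mb 0$ with $h(\nu)$ positive semidefinite but singular and $M(\nu)\succeq 0$, approximating $\nu$ by $\nu_\varepsilon\in\Lambda$ gives you nothing: $M(\nu_\varepsilon)$ need not be positive semidefinite, and $\|\varphi(\nu_\varepsilon)\|_{h(\nu_\varepsilon)}^2$ can diverge to $+\infty$ as $h(\nu_\varepsilon)$ degenerates, in which case $\tilde P_c\in\inter{H_{\nu_\varepsilon}}$ for every small $\varepsilon$ and the limit certifies nothing. A direct argument bypasses the Schur complement entirely: suppose $\tilde P_c\in\inter{\mc F}$ and take the operating point $V_L\in\mc D$ with $P_c(V_L)=\tilde P_c$ (Corollary~\ref{corollary:interior F one-to-one D}). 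Multiplying \eqref{eqn:dc power flow equation} by $\nu\T$ and symmetrizing gives $V_L\T h(\nu)V_L-V_L\T[\nu]\mc I_L^*+\nu\T\tilde P_c=0$, i.e.\ the quadratic form of $M(\nu)$ vanishes on the nonzero vector $\bigl(-V_L\T\;\;1\bigr)\T$; positive semidefiniteness then places this vector in $\ker M(\nu)$, and the first block row of that kernel condition rearranges, via \eqref{eqn:jacobian of P_c}, to $\pdd{P_c}{V_L}(V_L)\T\nu=\mb 0$, contradicting the nonsingularity of the Jacobian on $\mc D$. This works for every positive $\nu$, singular $h(\nu)$ or not, and it is the step your write-up still owes.
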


\subsection{Desirable operating points}
For a given vector of power demands $P_c$ there may exist multiple operating points $V_L$ which satisfy \eqref{eqn:dc power flow equation}.
In this paper we consider two types of operating points which are in some sense desirable.

\paragraph{Long-term voltage stable operating points}
The Jacobian of $P_c(V_L)$ is given by
\begin{align}\label{eqn:jacobian of P_c}
\pdd {P_c}{V_L}(V_L) = [Y_{LL}(V_L^* - V_L)] - [V_L]Y_{LL}.
\end{align}
\begin{definition}\label{definition:long-term voltage stable}
An operating point $\tilde V_L$ associated to $\tilde P_c$ is \emph{long-term voltage stable} if the Jacobian of $P_c(V_L)$ at $\tilde V_L$ is nonsingular, %
and its inverse is a matrix with negative elements (\ie,\footnote{The equality $\pdd {P_c}{V_L}(\tilde V_L)\inv = \pdd {V_L}{P_c}(\tilde P_c)$ follows from the Inverse Function Theorem, see \eg\ \cite{rudin1964principles}.} $\pdd {P_c}{V_L}(\tilde V_L)\inv = \pdd {V_L}{P_c}(\tilde P_c) < 0$). The set of all long-term voltage stable operating points is defined by
\begin{align*}
\mc D := \set{V_L}{%
\begin{minipage}{162pt}\text{$\exists P_c$ such that $V_L$ is a long-term voltage} \\\text{stable operating point associated to $P_c$}\end{minipage}
}.
\end{align*}
\end{definition}\vspace{4pt}
We remark that there are many equivalent definitions of long-term voltage stability DC power grids with constant power loads (see Remark~\PIref{remark:equivalent definitions of long-term voltage stability}).
Similar to Definition~\ref{definition:long-term voltage stable} we define the notion of long-term voltage semi-stability:
\begin{definition}\label{definition:long-term voltage semi-stable}
An operating point $\tilde V_L$ associated to $\tilde P_c$ is \emph{long-term voltage semi-stable} if for every $\varepsilon>0$ there exists a long-term voltage stable operating point $\widehat V_L$ associated to some $\widehat P_c$ such that $\|\tilde V_L- \widehat V_L\|_2<\varepsilon$. Consequently, the set of all long-term voltage semi-stable operating points equals $\cl{\mc D}$, the closure of $\mc D$.
\end{definition}
\begin{lemma}[Prop.~\ref{P1-proposition:jacobian M-matrix} and Cor.~\ref{P1-corollary:boundary of D} of Part I]\label{lemma:D m-matrix}
The set $\mc D$ of long-term voltage stable operating points,
and its closure $\cl{\mc D}$ of long-term voltage semi-stable operating points, and its boundary $\partial \mc D$ satisfy
\begin{align*}
&\mc D = \set{V_L>\mb 0}{ -\pdd {P_c}{V_L}(V_L) \text{ is a nonsingular M-matrix}};\\
&\cl{\mc D} = \set{V_L>\mb 0}{ -\pdd {P_c}{V_L}(V_L) \text{ is an M-matrix}};\\
&\partial \mc D = \set{V_L>\mb 0}{ -\pdd {P_c}{V_L}(V_L) \text{ is a singular M-matrix}}.
\end{align*}
\end{lemma}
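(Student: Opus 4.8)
The plan is to write $A(V_L) := -\pdd{P_c}{V_L}(V_L)$ and to prove all three identities from the theory of M-matrices, reducing the closure and boundary statements to a single density claim. First I would record the structure of $A$. By \eqref{eqn:jacobian of P_c}, $A(V_L) = [V_L]Y_{LL} - [Y_{LL}(V_L^*-V_L)]$, whose off-diagonal $(i,j)$ entry is $(V_L)_i (Y_{LL})_{ij}\le 0$; hence $A(V_L)$ is a Z-matrix (Definition~\ref{definition:Z-matrix}) for every $V_L>\mb 0$, and it shares the off-diagonal support of $Y_{LL}$, so it is irreducible, $Y_{LL}$ being irreducible for the connected grid considered here. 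Using \eqref{eqn:open-circuit voltages} I would also split off the constant part, $A(V_L) = L(V_L) - [\mc I_L^*]$, where $L(V_L):=[V_L]Y_{LL}+[Y_{LL}V_L]$ is linear in $V_L$ and $[\mc I_L^*] = [Y_{LL}V_L^*]$.

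For the first identity I would invoke the inverse-positivity characterization of nonsingular M-matrices: a Z-matrix is a nonsingular M-matrix if and only if it is invertible with nonnegative inverse, and for an irreducible such matrix the inverse is in fact entrywise positive. By Definition~\ref{definition:long-term voltage stable}, $V_L\in\mc D$ means $A(V_L)$ is invertible with $A(V_L)\inv>\mb 0$; since $A(V_L)\inv>\mb 0 \Rightarrow A(V_L)\inv\ge\mb 0 \Rightarrow$ nonsingular M-matrix, and conversely an irreducible nonsingular M-matrix has strictly positive inverse, this yields $\mc D = \set{V_L>\mb 0}{A(V_L)\text{ is a nonsingular M-matrix}}$.

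For the other two identities I would track $\tau(V_L)$, the eigenvalue of $A(V_L)$ of least real part. Writing $A = sI - B$ with $B\ge\mb 0$, Perron--Frobenius gives $\tau = s - \rho(B)$, which is real and continuous in $V_L$, and by Definition~\ref{definition:M-matrix} we have that $A(V_L)$ is an M-matrix iff $\tau\ge 0$, nonsingular iff $\tau>0$, and singular iff $\tau=0$. Thus $\mc D = \set{V_L>\mb 0}{\tau>0}$ is relatively open, while $M:=\set{V_L>\mb 0}{\tau\ge 0}$ is closed and contains $\mc D$, so $\cl{\mc D}\subseteq M$. Granting the reverse inclusion, the boundary identity follows formally: $\partial\mc D = \cl{\mc D}\setminus\inter{\mc D} = M\setminus\mc D = \set{V_L>\mb 0}{\tau=0}$, the singular M-matrices.

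The crux, and the step I expect to be the main obstacle, is the reverse inclusion $M\subseteq\cl{\mc D}$: every $\bar V$ with $A(\bar V)$ a singular M-matrix must be approached from within $\mc D$. Here I would perturb radially. As $A(\bar V)$ is irreducible and singular, $\tau=0$ is a simple eigenvalue with strictly positive right and left eigenvectors $u,w>\mb 0$, so $\tau$ is differentiable at $\bar V$ with derivative in direction $\delta$ equal to $(w\T u)\inv w\T L(\delta) u$ (using that the derivative of the affine map $A$ in direction $\delta$ is $L(\delta)$). Taking $\delta=\bar V$ and using $A(\bar V)u=\mb 0$, i.e.\ $L(\bar V)u = [\mc I_L^*]u$, the derivative equals $(w\T u)\inv w\T[\mc I_L^*]u>0$, the strict inequality being forced by $u,w>\mb 0$ and $\mc I_L^*\gneqq\mb 0$ (Lemma~\ref{lemma:positive open-circuit voltages}). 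Hence $\tau((1+t)\bar V)>0$ for small $t>0$, so $(1+t)\bar V\in\mc D$ and $(1+t)\bar V\to\bar V$, giving $\bar V\in\cl{\mc D}$. The care required is mainly in justifying the differentiability and simplicity of $\tau$ (standard analytic perturbation theory for a simple Perron eigenvalue) and in checking that $A$ stays an irreducible Z-matrix along the ray, which is immediate since $(1+t)\bar V>\mb 0$.
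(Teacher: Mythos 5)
Your strategy is sound, and there is nothing in this paper to compare it against directly: the lemma is imported from Part~I (it is stated here with a citation and no proof), where the closure and boundary descriptions appear to be obtained as corollaries of the explicit parametrization of $\cl{\mc D}$ by $\Lambda_1\times\RR_{\ge 0}$ (Theorem~\ref{theorem:parametrization of D}). You instead argue directly on the Perron root $\tau$ of the irreducible Z-matrix $-\pdd{P_c}{V_L}(V_L)$, which is a legitimate, self-contained alternative. The first identity is correct (inverse-positivity of irreducible nonsingular M-matrices matches Definition~\ref{definition:long-term voltage stable} exactly), and your density step is the right idea and cleanly executed: radial scaling $\bar V\mapsto(1+t)\bar V$ perturbs $-\pdd{P_c}{V_L}(\bar V)=g(\bar V)-[\mc I_L^*]$ by $t\,g(\bar V)$, and the first-order change of the simple Perron eigenvalue is $(w\T u)\inv w\T[\mc I_L^*]u>0$ because $u,w>\mb 0$ and $\mc I_L^*\gneqq\mb 0$.

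The one step that does not follow as written is the assertion that $M:=\set{V_L>\mb 0}{\tau(V_L)\ge 0}$ is closed in $\RR^n$: it is only obviously closed relative to the open orthant, so from $\mc D\subset M$ you may conclude only $\cl{\mc D}\cap\set{V_L}{V_L>\mb 0}\subset M$. If a sequence in $\mc D$ could converge to some $\bar V\gneqq\mb 0$ with a zero entry, both the formula for $\cl{\mc D}$ and the identity $\partial\mc D=M\setminus\mc D$ would fail as stated, so this point carries real weight. The repair is short but must be made: if $\bar V_{[\alpha]}=\mb 0$ for a nonempty $\alpha$, the diagonal entry of $-\pdd{P_c}{V_L}(\bar V)$ at $i\in\alpha$ equals $-(\mc I_L^*)_i+\sum_{j\notin\alpha}(Y_{LL})_{ij}\bar V_j$, and this is strictly negative for at least one $i\in\alpha$ (either $(\mc I_L^*)_i>0$ for some $i\in\alpha$, or, since $\mc I_L^*\gneqq\mb 0$ forces $\alpha\neq\boldsymbol n$ in that case, irreducibility of $Y_{LL}$ yields $i\in\alpha$ and $j\notin\alpha$ with $(Y_{LL})_{ij}<0$ and $\bar V_j>0$). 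Since for a Z-matrix $\tau$ is bounded above by the smallest diagonal entry, a whole neighbourhood of such a $\bar V$ is disjoint from $\mc D$; hence $\cl{\mc D}$ stays in the open orthant, $M$ is indeed closed, and the rest of your argument goes through.
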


\begin{theorem}[\mainresult{parametrization of D}\textemdash Theorem~3.12 of Part I]\label{theorem:parametrization of D}
The set $\mc D$ of long-term voltage stable operating points, its closure $\cl{\mc D}$, and its boundary $\partial\mc D$ are parametrized by
\begin{align*}
\mc D &= \set{\varphi(\lambda) + r h(\lambda)\inv \lambda }{\lambda\in\Lambda_1, r>0};\\
\cl{\mc D} &= \set{\varphi(\lambda) + r h(\lambda)\inv \lambda }{\lambda\in\Lambda_1, r\ge 0};\\
\partial\mc D &= \set{\varphi(\lambda)}{\lambda\in\Lambda_1}.
\end{align*}
\end{theorem}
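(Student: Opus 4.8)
The plan is to reduce everything to the M-matrix descriptions supplied by Lemma~\ref{lemma:D m-matrix}, which identify $\mc D$, $\cl{\mc D}$ and $\partial\mc D$ with the positive $V_L$ for which $M(V_L):=-\pdd{P_c}{V_L}(V_L)$ is a nonsingular M-matrix, an M-matrix, and a singular M-matrix, respectively. By \eqref{eqn:jacobian of P_c}, $M(V_L)=[V_L]Y_{LL}-[Y_{LL}(V_L^*-V_L)]$, so its off-diagonal entries are $(V_L)_i(Y_{LL})_{ij}$; since $Y_{LL}$ is a symmetric, irreducible, nonsingular M-matrix (Part~I) and $V_L>\mb 0$, the matrix $M(V_L)$ is an irreducible Z-matrix. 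Everything then hinges on one identity: using $\mc I_L^*=Y_{LL}V_L^*$ from \eqref{eqn:open-circuit voltages} and the symmetry of $Y_{LL}$, a direct computation gives, for all $V_L>\mb 0$ and $\lambda\in\RR^n$,
\begin{align}\label{eqn:plan key identity}
M(V_L)[\lambda]V_L = [V_L]\bigl(2h(\lambda)V_L-[\lambda]\mc I_L^*\bigr),\quad M(V_L)\T\lambda = 2h(\lambda)V_L-[\lambda]\mc I_L^*.
\end{align}
I would also record the positivity facts used repeatedly: for $\lambda\in\Lambda_1$ the diagonal $\lambda_i(Y_{LL})_{ii}$ of $h(\lambda)$ forces $\lambda>\mb 0$, whence $h(\lambda)$ is an irreducible symmetric positive-definite Z-matrix with $h(\lambda)\inv>0$ entrywise; combined with $\mc I_L^*\gneqq\mb 0$ (Lemma~\ref{lemma:positive open-circuit voltages}) this gives $\varphi(\lambda)>\mb 0$ and $h(\lambda)\inv\lambda>\mb 0$, so each parametrized point $\varphi(\lambda)+rh(\lambda)\inv\lambda$, $r\ge 0$, is indeed a positive voltage vector.

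For the inclusions ``$\supseteq$'' I substitute $V_L=\varphi(\lambda)+rh(\lambda)\inv\lambda$ into the first relation of \eqref{eqn:plan key identity}. Because $2h(\lambda)\varphi(\lambda)=[\lambda]\mc I_L^*$ by the definition \eqref{eqn:definition of phi} of $\varphi$, the bracket collapses to $2r\lambda$ and one obtains $M(V_L)[\lambda]V_L=2r[V_L]\lambda$. For $r>0$ the vector $[\lambda]V_L>\mb 0$ is mapped by the Z-matrix $M(V_L)$ to the strictly positive vector $2r[V_L]\lambda$, so $M(V_L)$ is a nonsingular M-matrix and $V_L\in\mc D$; for $r=0$ the same vector lies in the kernel, so $M(V_L)$ is a singular M-matrix and $V_L\in\partial\mc D$. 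This yields ``$\supseteq$'' for $\mc D$ and $\partial\mc D$, and hence for $\cl{\mc D}$ with $r\ge 0$.

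For ``$\subseteq$'' I would invert this correspondence through the second relation in \eqref{eqn:plan key identity}. Given $V_L\in\cl{\mc D}$, the matrix $M(V_L)\T$ is an irreducible M-matrix, so Perron--Frobenius provides a strictly positive eigenvector $\lambda>\mb 0$ whose eigenvalue $2r\ge 0$ is the one of smallest real part; this eigenvalue vanishes exactly when $M(V_L)$ is singular. Normalizing $\|\lambda\|_1=1$ and feeding $M(V_L)\T\lambda=2r\lambda$ into \eqref{eqn:plan key identity} gives $2h(\lambda)V_L=[\lambda]\mc I_L^*+2r\lambda$. Provided $h(\lambda)$ is positive definite, this rearranges to $V_L=\varphi(\lambda)+rh(\lambda)\inv\lambda$, with $r>0$ when $V_L\in\mc D$ and $r=0$ when $V_L\in\partial\mc D$, which is exactly the required form.

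The main obstacle is the positive-definiteness of $h(\lambda)$ just assumed, and this is where $\mc I_L^*\gneqq\mb 0$ and irreducibility are essential. Since $\lambda>\mb 0$, $h(\lambda)$ is an irreducible symmetric Z-matrix, and $2h(\lambda)V_L=[\lambda]\mc I_L^*+2r\lambda\ge\mb 0$ with $V_L>\mb 0$ already makes $h(\lambda)$ a symmetric M-matrix, hence positive semidefinite. If $r>0$ the right-hand side is strictly positive, which upgrades $h(\lambda)$ to a nonsingular M-matrix, \ie\ positive definite. If $r=0$ I would exclude singularity by an orthogonality argument: an irreducible singular $h(\lambda)$ would have a strictly positive kernel vector $z>\mb 0$, giving $0=z\T h(\lambda)V_L=\tfrac12 z\T[\lambda]\mc I_L^*$, which is impossible because $z>\mb 0$, $\lambda>\mb 0$ and $\mc I_L^*\gneqq\mb 0$ force $z\T[\lambda]\mc I_L^*>0$. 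Hence $h(\lambda)$ is positive definite in both cases, so $\lambda\in\Lambda_1$, completing the reverse inclusions and all three identities. The only external inputs are standard facts about M-matrices and Perron--Frobenius theory (if a Z-matrix $A$ satisfies $Ax\ge\mb 0$, resp.\ $Ax>\mb 0$, for some $x>\mb 0$, then $A$ is an M-matrix, resp.\ a nonsingular M-matrix; an irreducible singular M-matrix has a strictly positive kernel vector), which can be cited from \cite{fiedler1986special}.
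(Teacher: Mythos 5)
Your proof is correct, and it follows what is evidently the paper's own route: the identity $-\pdd{P_c}{V_L}(V_L)\T\lambda = 2h(\lambda)V_L - [\lambda]\mc I_L^*$ is exactly the left-eigenvector counterpart of the computation carried out in Lemma~\ref{lemma:jacobian of P_c M-matrix}, whose proof the paper states is analogous to that of Theorem~3.12 of Part~I, so the intended argument is precisely this reduction to the Perron root/vector of the (transposed) Jacobian via Lemma~\ref{lemma:D m-matrix}. Your handling of the one genuinely delicate point\textemdash positive definiteness of $h(\lambda)$ when the Perron root vanishes, via the strictly positive kernel vector of an irreducible singular M-matrix and $z\T[\lambda]\mc I_L^*>0$\textemdash is sound and complete.
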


Part~I shows that the boundary of $\mc F$ and the boundary of $\mc D$ are in one-to-one correspondence, and that this result extends to a one-to-one correspondence between $\mc F$ and $\cl{\mc D}$.
\begin{theorem}[\mainresult{one-to-one boundary}\textemdash Corollary~\PIref{corollary:one-to-one boundary}]\label{theorem:one-to-one boundary}
For each $\tilde P_c$ on the boundary of $\mc F$ there exist a unique $\tilde V_L\in\RR^n$ %
that satisfies \eqref{eqn:dc power flow equation}. All such $\tilde V_L$ satisfy $\tilde V_L>\mb 0$ and form the boundary of $\mc D$. 
This implies that there is a one-to-one correspondence between $\partial \mc D$ and $\partial \mc F$.
\end{theorem}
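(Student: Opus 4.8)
The plan is to base everything on a single completing-the-square identity. For any $\lambda\in\Lambda$ and any $V_L\in\RR^n$ I would first show
\[
\lambda\T P_c(V_L) = \|\varphi(\lambda)\|_{h(\lambda)}^2 - \|V_L - \varphi(\lambda)\|_{h(\lambda)}^2 .
\]
This follows directly from \eqref{eqn:definition of f}: writing $\lambda\T[V_L]=V_L\T[\lambda]$, using $Y_{LL}V_L^* = \mc I_L^*$ to get $V_L\T[\lambda]Y_{LL}V_L^* = V_L\T[\lambda]\mc I_L^* = 2V_L\T h(\lambda)\varphi(\lambda)$ via \eqref{eqn:definition of phi}, and using the symmetry of $Y_{LL}$ to replace the quadratic term $V_L\T[\lambda]Y_{LL}V_L$ by $V_L\T h(\lambda)V_L = \|V_L\|_{h(\lambda)}^2$; completing the square yields the identity. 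Since $h(\lambda)$ is positive definite for $\lambda\in\Lambda$, the last term is nonnegative and vanishes exactly when $V_L=\varphi(\lambda)$. Hence $\lambda\T P_c(V_L)\le\|\varphi(\lambda)\|_{h(\lambda)}^2$ for every $V_L\in\RR^n$, with equality if and only if $V_L=\varphi(\lambda)$. This re-derives $\mc F\subset H_\lambda$ and, crucially, pins down where each supporting hyperplane is touched.

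Granting a direction $\lambda\in\Lambda_1$ whose supporting hyperplane passes through a given $\tilde P_c\in\partial\mc F$, i.e.\ $\lambda\T\tilde P_c=\|\varphi(\lambda)\|_{h(\lambda)}^2$, uniqueness and positivity of the operating point become immediate. Indeed, if $\tilde V_L\in\RR^n$ is \emph{any} solution of \eqref{eqn:dc power flow equation} for $\tilde P_c$, then $\lambda\T P_c(\tilde V_L)=\lambda\T\tilde P_c=\|\varphi(\lambda)\|_{h(\lambda)}^2$, so the identity forces $\|\tilde V_L-\varphi(\lambda)\|_{h(\lambda)}^2=0$ and hence $\tilde V_L=\varphi(\lambda)$. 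This is therefore the unique real solution (positivity is not assumed in the argument), and since $\varphi(\lambda)\in\partial\mc D$ by Theorem~\ref{theorem:parametrization of D}, Lemma~\ref{lemma:D m-matrix} gives $\tilde V_L=\varphi(\lambda)>\mb 0$.

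The main obstacle is exactly the extraction of this supporting direction $\lambda\in\Lambda_1$ at a boundary point. One route invokes Theorem~\ref{theorem:convexity of F}: as $\mc F$ is closed, convex, has nonempty interior, and equals $\bigcap_{\lambda\in\Lambda_1}H_\lambda$, a boundary point must saturate at least one defining half-space, so it lies on some $\partial H_\lambda$; making this attainment rigorous is delicate, since positive definiteness of $h(\lambda)$ is an open condition and $\Lambda_1$ is not compact. A sharper route uses the boundary case of Theorem~\ref{theorem:necessary and sufficient condition}: $\tilde P_c\in\partial\mc F=\mc F\setminus\inter{\mc F}$ means some positive $\nu$ makes \eqref{eqn:necessary and sufficient condition} positive semi-definite but not positive definite, hence singular. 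When $h(\nu)$ is positive definite, a Schur-complement computation against the $(1,1)$-block turns this singularity into precisely $\nu\T\tilde P_c=\tfrac14([\nu]\mc I_L^*)\T h(\nu)\inv[\nu]\mc I_L^*=\|\varphi(\nu)\|_{h(\nu)}^2$; normalizing $\nu$ to $\Lambda_1$ then supplies the desired $\lambda$. The residual gap in this route, which I expect to be the technical heart of the proof, is to exclude or separately handle the degenerate case where the extracted $\nu$ has $h(\nu)$ only positive semi-definite.

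Finally I would assemble the bijection. The map $\partial\mc F\to\partial\mc D$ sending $\tilde P_c$ to its unique operating point $\varphi(\lambda)$ is well defined and lands in $\partial\mc D=\{\varphi(\lambda):\lambda\in\Lambda_1\}$ by Theorem~\ref{theorem:parametrization of D}. For the inverse, take $V_L=\varphi(\lambda)\in\partial\mc D$; the equality case of the identity gives $\lambda\T P_c(\varphi(\lambda))=\|\varphi(\lambda)\|_{h(\lambda)}^2$, so $P_c(\varphi(\lambda))\in\partial H_\lambda$, while $P_c(\varphi(\lambda))\in\mc F$ because $\varphi(\lambda)>\mb 0$ is a genuine operating point; a point of the convex set $\mc F$ lying on a supporting hyperplane is a boundary point, so $P_c(\varphi(\lambda))\in\partial\mc F$. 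The two maps $V_L\mapsto P_c(V_L)$ and $\tilde P_c\mapsto\varphi(\lambda)$ are mutually inverse, which establishes the claimed one-to-one correspondence between $\partial\mc D$ and $\partial\mc F$.
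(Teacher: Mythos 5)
Your completing-the-square identity is correct and is precisely the engine the authors use (it is Lemma~\PIref{lemma:lambda f}, invoked several times in this paper, e.g.\ in the proofs of Lemma~\ref{lemma:power demand domination} and Theorem~\ref{theorem:nonnegative necessary and sufficient condition}): the computation $\lambda\T P_c(V_L)=\|\varphi(\lambda)\|_{h(\lambda)}^2-\|V_L-\varphi(\lambda)\|_{h(\lambda)}^2$ checks out, and once a $\lambda\in\Lambda_1$ with $\lambda\T\tilde P_c=\|\varphi(\lambda)\|_{h(\lambda)}^2$ is in hand, your deduction that \emph{every} real solution of \eqref{eqn:dc power flow equation} equals $\varphi(\lambda)$, hence is positive and lies in $\partial\mc D$ by Theorem~\ref{theorem:parametrization of D} and Lemma~\ref{lemma:D m-matrix}, is clean and gives uniqueness over all of $\RR^n$ as the statement requires. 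The reverse inclusion $P_c(\partial\mc D)\subset\partial\mc F$ via the equality case and the supporting-hyperplane criterion is also sound. Note that the statement under review is itself imported from Part~I, so Part~II contains no proof to compare against; but the ingredients it cites indicate the intended argument is exactly yours.

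The genuine gap is the one you flag yourself: you never establish that a boundary point of $\mc F$ actually \emph{attains} equality in some $H_\lambda$ with $\lambda\in\Lambda_1$. Neither of your routes closes it. For an intersection of infinitely many half-spaces over a non-compact, non-closed index set, a boundary point need not saturate any single member (compare $\bigcap_{t>0}\{x\le 1/t\}=\{x\le 0\}$, whose boundary point saturates none), so Theorem~\ref{theorem:convexity of F} alone does not suffice; and the Schur-complement route through Theorem~\ref{theorem:necessary and sufficient condition} leaves open exactly the degenerate case where the extracted $\nu$ has $h(\nu)$ singular positive semi-definite, in which $\varphi(\nu)$ is not even defined. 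What fills this hole in Part~I is Theorem~\PIref{theorem:supporting half-spaces of F}, which classifies \emph{all} supporting half-spaces of $\mc F$ as the $H_\lambda$, $\lambda\in\Lambda_1$, each with unique point of support $P_c(\varphi(\lambda))$; combined with the supporting-hyperplane theorem for the closed convex set $\mc F$ this yields the saturated direction you need at every $\tilde P_c\in\partial\mc F$. Without that classification (or an equivalent compactness/attainment argument on $\Lambda_1$), the existence-of-$\lambda$ step, and with it the uniqueness claim at a general boundary point, remains unproved.
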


In Part~I it was shown that Theorem~\ref{theorem:one-to-one boundary} extends to a one-to-one correspondence between $\mc F$ and $\cl{\mc D}$.
This correspondence was made explicit by showing that the operating point $\tilde V_L$ in $\cl{\mc D}$ corresponding to a feasible power demand $\tilde P_c$ in $\mc F$ can be found by solving an initial value problem.
\begin{theorem}[\mainresult{one-to-one correspondence} \textemdash Theorem~\PIref{theorem:one-to-one correspondence}]\label{theorem:one-to-one correspondence}
There is a one-to-one correspondence between the long-term voltage semi-stable operating points $\cl{\mc D}$ and the feasible power demands. \Ie, for each $\tilde P_c\in\mc F$ there exists a unique $\tilde V_L\in\cl{\mc D}$ which satisfies $\tilde P_c = P_c(\tilde V_L)$, implying that $\mc F = P_c(\cl{\mc D})$. More explicitly, $\tilde V_L$ is obtained by solving the initial value problem
\begin{align}\label{eqn:one-to-one correspondence:initial value problem}
\dot \gamma(\theta) = \l(\pdd {P_c} {V_L} (\gamma(\theta))\r)\inv \tilde P_c %
\end{align}
for $\gamma: [0,1]\to \RR^n$ with initial value $\gamma(0)=V_L^*$, where the solution $\gamma$ exists, is unique, and satisfies $\gamma(1)=\tilde V_L$.
\end{theorem}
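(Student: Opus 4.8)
The plan is to prove that the power map $P_c(\cdot)$ of \eqref{eqn:definition of f} restricts to a homeomorphism of $\cl{\mc D}$ onto $\mc F$, and then to recognise the stated initial value problem as the curve that traces the preimage of the segment $\{\theta\tilde P_c:\theta\in[0,1]\}$. The computational backbone is the identity
\begin{align*}
\lambda\T P_c(V_L) = \|\varphi(\lambda)\|_{h(\lambda)}^2 - \|V_L - \varphi(\lambda)\|_{h(\lambda)}^2,\quad \lambda\in\Lambda,
\end{align*}
which follows from \eqref{eqn:definition of f}, \eqref{eqn:definition of h}, \eqref{eqn:definition of phi} after substituting $Y_{LL}V_L^* = \mc I_L^*$ and completing the square. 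It makes the inclusion $P_c(V_L)\in H_\lambda$ transparent, pins the unique point attaining equality in $H_\lambda$ to $V_L=\varphi(\lambda)\in\partial\mc D$, and shows that $\lambda\T P_c(V_L)\to-\infty$ as $\|V_L-\varphi(\lambda)\|_{h(\lambda)}\to\infty$. This coercive behaviour is what will drive the properness argument below.

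First I would treat the interior. By Lemma~\ref{lemma:D m-matrix}, $-\pdd{P_c}{V_L}(V_L)$ is a nonsingular M-matrix on the open set $\mc D$, so the Jacobian is invertible and $P_c$ is a local diffeomorphism there; since $\mc D$ is open, $P_c(\mc D)$ is open, and as $P_c(V_L^*)=\mb 0$ with $-\pdd{P_c}{V_L}(V_L^*)=[V_L^*]Y_{LL}$ a nonsingular M-matrix (so $V_L^*\in\mc D$), we get $\mb 0\in P_c(\mc D)\subset\inter{\mc F}$. I would then upgrade this to a global diffeomorphism $P_c:\mc D\to\inter{\mc F}$ via the Hadamard--Caccioppoli global inversion theorem: $\inter{\mc F}$ is convex, hence connected and simply connected (Theorem~\ref{theorem:convexity of F}); $\mc D$ is connected because the parametrization of Theorem~\ref{theorem:parametrization of D} exhibits it as a continuous image of $\Lambda_1\times(0,\infty)$ with $\Lambda_1$ connected (a pointed convex cone meets the $\ell_1$-sphere in a connected set); and the map is proper into $\inter{\mc F}$. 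For properness I would exclude the three ways a sequence $V_L^{(k)}\in\mc D$ can escape while $P_c(V_L^{(k)})$ stays in a compact $K\subset\inter{\mc F}$: approach to $\partial\mc D$ is ruled out because Theorem~\ref{theorem:one-to-one boundary} forces the image toward $\partial\mc F$, while divergence $\|V_L^{(k)}\|\to\infty$ or approach to the orthant boundary is ruled out by the coercivity identity, which sends $\lambda\T P_c(V_L^{(k)})\to-\infty$ for a suitable supporting direction $\lambda$, again leaving $K$.

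Next I would glue in the boundary. Theorem~\ref{theorem:one-to-one boundary} already supplies the bijection $\partial\mc D\leftrightarrow\partial\mc F$. Since $\mc F$ is closed and convex it splits as $\mc F=\inter{\mc F}\sqcup\partial\mc F$, while $\cl{\mc D}=\mc D\sqcup\partial\mc D$; combining the interior diffeomorphism with the boundary bijection yields a bijection $P_c:\cl{\mc D}\to\mc F$, which is exactly the asserted existence and uniqueness of $\tilde V_L$ and the identity $\mc F=P_c(\cl{\mc D})$. For the initial value problem, note that for $\tilde P_c\in\mc F$ convexity together with $\mb 0\in\inter{\mc F}$ places the open segment $\{\theta\tilde P_c:0\le\theta<1\}$ inside $\inter{\mc F}$; setting $\gamma(\theta):=(P_c|_{\mc D})\inv(\theta\tilde P_c)$ gives a smooth curve on $[0,1)$ with $\gamma(0)=V_L^*$, and differentiating $P_c(\gamma(\theta))=\theta\tilde P_c$ and inverting the Jacobian (nonsingular on $\mc D$) produces precisely $\dot\gamma(\theta)=\l(\pdd{P_c}{V_L}(\gamma(\theta))\r)\inv\tilde P_c$; uniqueness follows from smoothness of this vector field on $\mc D$, and properness forces $\gamma(\theta)$ to converge as $\theta\to1$ to the unique preimage $\tilde V_L\in\cl{\mc D}$, so $\gamma(1)=\tilde V_L$.

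I expect the main obstacle to be twofold and to live in the same place: establishing properness of $P_c|_{\mc D}$ so that the global inversion theorem applies and the continuation cannot stall, and controlling the endpoint $\theta=1$ when $\tilde P_c\in\partial\mc F$. In the latter case $\tilde V_L\in\partial\mc D$, where the Jacobian is a singular M-matrix, so the right-hand side of the initial value problem may blow up as $\theta\to1$; showing that $\gamma$ nonetheless extends continuously to the correct boundary point, rather than diverging or hitting the orthant boundary, is the delicate part, and is where the coercivity identity and the ray structure $\varphi(\lambda)+rh(\lambda)\inv\lambda$ of Theorem~\ref{theorem:parametrization of D} must be used quantitatively.
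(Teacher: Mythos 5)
Your argument is sound and reaches the theorem, but it is organized quite differently from the paper's proof (which lives in Part~I and, as the statement itself signals, is built directly around the continuation along the ray $\theta\tilde P_c$: one shows the initial value problem \eqref{eqn:one-to-one correspondence:initial value problem} has a solution staying in $\cl{\mc D}$ up to $\theta=1$, which simultaneously yields existence, uniqueness and the constructive recipe). You instead prove the global statement first, via the Hadamard--Caccioppoli theorem: local diffeomorphism on $\mc D$ from Lemma~\ref{lemma:D m-matrix}, properness into $\inter{\mc F}$ from the completed-square identity $\lambda\T P_c(V_L)=\|\varphi(\lambda)\|_{h(\lambda)}^2-\|V_L-\varphi(\lambda)\|_{h(\lambda)}^2$ (which is indeed Lemma~\PIref{lemma:lambda f} and is correct), connectedness of $\mc D$ from Theorem~\ref{theorem:parametrization of D} together with convexity of $\Lambda_1$, and simple connectedness of the convex set $\inter{\mc F}$; the boundary is then glued on via Theorem~\ref{theorem:one-to-one boundary}, and the IVP is read off a posteriori by differentiating $P_c(\gamma(\theta))=\theta\tilde P_c$. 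What your route buys is a clean separation of the topological content (covering-map argument) from the analytic content (coercivity), at the price of invoking a global inversion theorem the paper does not need; the paper's continuation argument is more self-contained and directly produces the algorithmic statement about $\gamma$.

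One justification in your properness step is misattributed: the coercivity identity only controls escape to infinity (taking, say, $\lambda=\mb 1$ forces $\mb 1\T P_c(V_L^{(k)})\to-\infty$ when $\|V_L^{(k)}\|\to\infty$), but it says nothing about a bounded sequence in $\mc D$ drifting toward the boundary of the positive orthant, since there the $h(\lambda)$-distance to $\varphi(\lambda)$ stays finite. That case is instead excluded by Lemma~\ref{lemma:D m-matrix} (equivalently Theorem~\ref{theorem:parametrization of D}), which places the topological closure $\cl{\mc D}$ inside the open orthant, so every bounded sequence in $\mc D$ has its limit points in $\mc D\cup\partial\mc D$ and the orthant-boundary alternative never occurs. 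With that citation corrected, and with the endpoint $\theta\to1$ for $\tilde P_c\in\partial\mc F$ handled as you indicate (boundedness of $\gamma$ from the identity with $\lambda=\mb 1$, plus uniqueness of the preimage in $\cl{\mc D}$ already established), the proof is complete.
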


Theorem~\ref{theorem:one-to-one boundary} and Theorem~\ref{theorem:one-to-one correspondence} together imply the following corollary.
\begin{corollary}\label{corollary:interior F one-to-one D}
There is a one-to-one correspondence between the long-term voltage stable operating points $\mc D$ and the feasible power demands under small perturbations $\inter{\mc F}$.
\end{corollary}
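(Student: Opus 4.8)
The plan is to derive Corollary~\ref{corollary:interior F one-to-one D} as a direct consequence of the two one-to-one correspondences already established, namely Theorem~\ref{theorem:one-to-one boundary} (between $\partial\mc D$ and $\partial\mc F$) and Theorem~\ref{theorem:one-to-one correspondence} (between $\cl{\mc D}$ and $\mc F$). The key observation is that $\inter{\mc F}$ is obtained from $\mc F$ by removing $\partial\mc F$, and $\mc D$ is obtained from $\cl{\mc D}$ by removing $\partial\mc D$. Since the full correspondence $P_c : \cl{\mc D}\to\mc F$ is a bijection that restricts to a bijection $\partial\mc D\to\partial\mc F$, removing these matched boundary pieces should leave a bijection between the remaining interior parts. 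So the entire argument amounts to checking that the global bijection maps $\partial\mc D$ exactly onto $\partial\mc F$, and hence maps its complement $\mc D=\cl{\mc D}\setminus\partial\mc D$ onto the complement $\mc F\setminus\partial\mc F=\inter{\mc F}$.

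First I would recall from Theorem~\ref{theorem:one-to-one correspondence} that the map $P_c(\cdot)$ is a bijection from $\cl{\mc D}$ onto $\mc F$. Next I would invoke Theorem~\ref{theorem:one-to-one boundary}, which states that the restriction of this same map to $\partial\mc D$ is a bijection onto $\partial\mc F$; in particular the image of $\partial\mc D$ under $P_c(\cdot)$ is precisely $\partial\mc F$. The central step is then a set-theoretic one: for any bijection $f : X\to Y$ and any subset $A\subset X$ with $f(A)=B$, the restriction of $f$ to $X\setminus A$ is a bijection onto $Y\setminus B$. Applying this with $X=\cl{\mc D}$, $Y=\mc F$, $A=\partial\mc D$, and $B=\partial\mc F$ yields a bijection from $\cl{\mc D}\setminus\partial\mc D$ onto $\mc F\setminus\partial\mc F$.

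To finish I would identify these two difference sets explicitly. Since $\mc D$ is open (being characterized in Lemma~\ref{lemma:D m-matrix} by the nonsingular M-matrix condition, an open condition) with closure $\cl{\mc D}$, we have $\cl{\mc D}\setminus\partial\mc D=\mc D$. On the other side, $\mc F$ is closed by Theorem~\ref{theorem:convexity of F}, so $\mc F\setminus\partial\mc F=\inter{\mc F}$. Combining these identifications with the bijection from the previous step gives the desired one-to-one correspondence between $\mc D$ and $\inter{\mc F}$.

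I do not expect any serious obstacle here, since both heavy lifting ingredients are already proved in Part~I and restated as Theorem~\ref{theorem:one-to-one boundary} and Theorem~\ref{theorem:one-to-one correspondence}. The only point requiring a little care is the set-theoretic bookkeeping, specifically verifying that the global bijection genuinely sends the boundary \emph{onto} the boundary (both directions), so that complements match exactly; this is precisely the content of Theorem~\ref{theorem:one-to-one boundary}, which asserts that \emph{every} boundary operating point arises from a boundary power demand and conversely. Given that, the complement-of-a-matched-subset argument is immediate, and the identifications $\cl{\mc D}\setminus\partial\mc D=\mc D$ and $\mc F\setminus\partial\mc F=\inter{\mc F}$ are routine consequences of $\mc D$ being open and $\mc F$ being closed.
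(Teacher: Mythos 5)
Your proposal is correct and matches the paper's approach: the paper states this corollary as an immediate consequence of Theorem~\ref{theorem:one-to-one boundary} and Theorem~\ref{theorem:one-to-one correspondence}, which is exactly the combination you use. Your additional care in verifying that the matched boundary pieces can be removed (using that $\mc D$ is open and $\mc F$ is closed) is the same implicit bookkeeping the paper relies on.
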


\paragraph{Dissipation-minimizing operating points}
It is desirable that an operating point which satisfies \eqref{eqn:dc power flow equation} minimizes $R(V_L,V_S)$, the total power dissipated in the lines. 
\begin{definition}\label{definition:dissipation-minimizing operating point}
Given $P_c$, an operating point $\tilde V_L$ associated to $\tilde P_c$ is \emph{dissipation-minimizing} if for all operating points $V_L$ associated to $\tilde P_c$ we have $R(\tilde V_L,V_S)\le R(V_L,V_S)$.
\end{definition}
Related to the dissipation-minimizing operating points are the high-voltage solutions.
\begin{definition}\label{definition:high-voltage solution}
An operating point $\tilde V_L$ associated to the power demands $P_c$ is a \emph{high-voltage solution} if $\tilde V_L \ge V_L$ for all $V_L$ associated to $P_c$, and is a \emph{strict high-voltage solution} if $\tilde V_L > V_L$ for all $V_L\neq\tilde V_L$ associated to $P_c$.
\end{definition}

Note that this definition does not guarantee the existence of a (strict) high-voltage solution for a given $P_c$.
For such operating points the following proposition applies.
\begin{corollary}\label{corollary:high-voltage solution point dissipation-minimizing}
If the operating point $\tilde V_L$ associated to the power demands $P_c$ is a high-voltage solution, then $\tilde V_L$ is dissipation-minimizing. Moreover, if $\tilde V_L$ is a strict high-voltage solution, then $\tilde V_L$ is the unique dissipation-minimizing operating point.
\end{corollary}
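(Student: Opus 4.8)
The plan is to show that, once we restrict attention to the operating points associated to a single fixed power demand, the dissipated power $R(V_L,V_S)$ becomes an affine function of $V_L$ whose variable part is $-(\mc I_L^*)\T V_L$. The high-voltage property then forces $R$ to be minimal, and strictness yields uniqueness.

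First I would expand $R(V_L,V_S)=V\T Y V$ using the block partition \eqref{eqn:partition of Y} and the symmetry $Y_{SL}=Y_{LS}\T$, and substitute $Y_{LS}V_S=-\mc I_L^*$ from \eqref{eqn:source-injected currents}, obtaining $R(V_L,V_S)=V_L\T Y_{LL}V_L-2(\mc I_L^*)\T V_L+V_S\T Y_{SS}V_S$. This expression is genuinely quadratic in $V_L$, so the next step is the crucial one: eliminate the quadratic term using the power flow constraint. Left-multiplying \eqref{eqn:dc power flow equation} by $\mb 1\T$ and using $\mb 1\T[V_L]=V_L\T$ together with $Y_{LL}V_L^*=\mc I_L^*$ from \eqref{eqn:open-circuit voltages} gives $V_L\T Y_{LL}V_L=(\mc I_L^*)\T V_L-\mb 1\T P_c$. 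Substituting this back collapses $R$ to the affine form $R(V_L,V_S)=-(\mc I_L^*)\T V_L-\mb 1\T P_c+V_S\T Y_{SS}V_S$, in which only the term $-(\mc I_L^*)\T V_L$ depends on the choice of operating point (the remaining two terms are fixed once $P_c$ and $V_S$ are fixed).

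Consequently, for any two operating points $\tilde V_L$ and $V_L$ associated to the same $P_c$ we have $R(\tilde V_L,V_S)-R(V_L,V_S)=-(\mc I_L^*)\T(\tilde V_L-V_L)$. If $\tilde V_L$ is a high-voltage solution then $\tilde V_L-V_L\ge\mb 0$, and since $\mc I_L^*\gneqq\mb 0$ by Lemma~\ref{lemma:positive open-circuit voltages}, the right-hand side is $\le 0$, so $R(\tilde V_L,V_S)\le R(V_L,V_S)$ for every such $V_L$; hence $\tilde V_L$ is dissipation-minimizing. If moreover $\tilde V_L$ is a strict high-voltage solution, then $\tilde V_L-V_L>\mb 0$ for every $V_L\neq\tilde V_L$, and because $\mc I_L^*$ has at least one strictly positive entry we obtain $(\mc I_L^*)\T(\tilde V_L-V_L)>0$, i.e.\ $R(\tilde V_L,V_S)<R(V_L,V_S)$ strictly; this rules out any other minimizer and establishes uniqueness.

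I expect no serious obstacle. The only genuine subtlety is that $R$ is quadratic in $V_L$ away from the solution set, so the argument must invoke \eqref{eqn:dc power flow equation} to linearize it; and the passage from the weak to the strict conclusion relies essentially on $\mc I_L^*$ being nonzero (not merely nonnegative), which is precisely the content of Lemma~\ref{lemma:positive open-circuit voltages}.
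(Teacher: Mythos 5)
Your proof is correct and complete. The key identity is verified easily: expanding $R(V_L,V_S)=V_L\T Y_{LL}V_L+2V_L\T Y_{LS}V_S+V_S\T Y_{SS}V_S$ (using symmetry of $Y$) and eliminating the quadratic term via $\mb 1\T$ times \eqref{eqn:dc power flow equation} together with $Y_{LL}V_L^*=\mc I_L^*$ indeed yields $R(V_L,V_S)=-(\mc I_L^*)\T V_L-\mb 1\T P_c+V_S\T Y_{SS}V_S$ on the solution set of a fixed $P_c$, so the comparison of dissipations reduces to the sign of $(\mc I_L^*)\T(\tilde V_L-V_L)$, and Lemma~\ref{lemma:positive open-circuit voltages} gives exactly what is needed for both the weak and the strict conclusion. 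The paper states this corollary without reproducing a proof in Part~II; your argument is the natural one and relies only on ingredients already available here.
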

Definitions~\ref{definition:long-term voltage stable} and \ref{definition:long-term voltage semi-stable} describe local properties of an operating point, while Definitions~\ref{definition:dissipation-minimizing operating point} and \ref{definition:high-voltage solution} are global properties concerning all operating points associated to $P_c$.
In Part II of this paper we prove that for each feasible power demand $\tilde P_c$, any operating point associated to $\tilde P_c$ which is either long-term voltage semi-stable, dissipation-minimizing or a high-voltage solution, satisfies all three properties and is unique (\mainresult{desirable operating point}).

\section{Nonnegative feasible power demands}\label{section:nonnegative power demands}
In this section we study the feasibility of nonnegative power demands (\ie, power demands $\tilde P_c$ such that $\tilde P_c\ge \mb 0$).
Recall that in Part~I we consider constant-power loads which could both drain power and inject power.
However, practical applications of DC power grids often deal with constant-power loads that do not inject power into the network, in which case the power demands are nonnegative. %
The goal of this section is to refine the result of Part~I for such power demands.
In particular we show that the necessary and sufficient LMI condition for the feasibility of a vector of power demands $\tilde P_c\in\RR^n$ (Theorem~\ref{theorem:necessary and sufficient condition}) can be refined, leading to a condition which is cheaper to compute. %

This section is structured as follows.
We first identify the operating points corresponding to a nonnegative power demand (Lemma~\ref{lemma:nonnegative power demands}). In addition we present a refinement for the geometric characterization of Theorem~\ref{theorem:convexity of F} (Lemma~\ref{lemma:geometric characterization nonnegative F}), which motivates us to study the boundary of $\mc F$ in more detail.
To study this boundary we deduce an alternative parametrization of $\mc D$ (Theorem~\ref{theorem:alternative parametrization of D}), which is in a sense dual to the parametrization in Theorem~\ref{theorem:parametrization of D}. %
We subsequently give a parametrization of the boundary of $\mc F$ (Theorem~\ref{theorem:parametrization of boundary of F}).
This parametrization gives rise to a parametrization of the boundary of $\mc F$ in the nonnegative orthant (Theorem~\ref{theorem:parametrization nonnegative boundary of F}).
We then reformulate the geometric characterization (Corollary~\ref{corollary:geometric characterization nonnegative F in N}), and refine the necessary and sufficient LMI condition of Theorem~\ref{theorem:necessary and sufficient condition} for nonnegative power demands (Theorem~\ref{theorem:nonnegative necessary and sufficient condition}).

\subsection{Operating points and a geometric characterization for nonnegative power demands}
We are interested in the nonnegative feasible power demands, which are described by the set $\mc F\cap \mc N$, where
\begin{align*}
\mc N := \set{\nu\in\RR^n}{\nu\ge \mb 0}
\end{align*}
denote the nonnegative vectors.
The next lemma characterizes the operating points which correspond to a nonnegative power demand.
\begin{lemma}\label{lemma:nonnegative power demands}
A feasible power demand $\tilde P_c$ is nonnegative (\ie, $\tilde P_c\in\mc F\cap \mc N$) if and only if %
the operating points $\tilde V_L$ associated to $\tilde P_c$ satisfy %
$Y_{LL} \tilde V_L\le Y_{LL}V_L^* = \mc I_L^*$.
\end{lemma}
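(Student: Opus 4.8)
The plan is to prove this equivalence by a direct computation that rests entirely on the strict positivity of the operating point. First I would fix an arbitrary operating point $\tilde V_L$ associated to $\tilde P_c$, so that by \eqref{eqn:definition of f} we have $\tilde P_c = P_c(\tilde V_L) = [\tilde V_L] Y_{LL}(V_L^* - \tilde V_L)$. The crucial observation is that $[\tilde V_L]$ is a diagonal matrix whose diagonal entries are the components of $\tilde V_L$, all of which are strictly positive because every operating point satisfies $\tilde V_L > \mb 0$ by the standing assumption in Definition~\ref{definition:feasible}. Left-multiplication by such a matrix scales each component by a positive number and hence preserves its sign component-by-component.

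Writing the identity above component-wise as $(\tilde P_c)_i = (\tilde V_L)_i \cdot [Y_{LL}(V_L^* - \tilde V_L)]_i$ with $(\tilde V_L)_i > 0$, I would conclude that $(\tilde P_c)_i \ge 0$ holds if and only if $[Y_{LL}(V_L^* - \tilde V_L)]_i \ge 0$, for each index $i$. Collecting these equivalences gives $\tilde P_c \ge \mb 0$ if and only if $Y_{LL}(V_L^* - \tilde V_L) \ge \mb 0$, which rearranges to $Y_{LL} \tilde V_L \le Y_{LL} V_L^*$. Finally I would invoke \eqref{eqn:open-circuit voltages}, which yields $Y_{LL} V_L^* = \mc I_L^*$, to arrive at $Y_{LL} \tilde V_L \le Y_{LL} V_L^* = \mc I_L^*$, as claimed. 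Since the nonnegativity of $\tilde P_c$ is a property of $\tilde P_c$ alone and does not depend on the chosen operating point, the equivalence simultaneously shows that either all operating points associated to $\tilde P_c$ satisfy the inequality (precisely when $\tilde P_c \ge \mb 0$) or none of them do, which justifies the plural phrasing in the statement.

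There is essentially no hard step here. The only point requiring care is the strict positivity $\tilde V_L > \mb 0$: it is exactly what makes left-multiplication by $[\tilde V_L]$ a sign-preserving operation, and thus what allows the diagonal matrix to be ``cancelled'' at the level of the inequalities. Were a component of $\tilde V_L$ permitted to vanish, the corresponding equivalence could fail, so the standing requirement $V_L > \mb 0$ is doing the real work.
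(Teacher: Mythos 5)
Your proof is correct and follows the same route as the paper: both arguments use $\tilde P_c = [\tilde V_L]Y_{LL}(V_L^*-\tilde V_L)$ together with the strict positivity of $\tilde V_L$ to cancel the diagonal factor sign-wise, then apply \eqref{eqn:open-circuit voltages}. Your added remark that the inequality holds for all operating points or for none is a harmless elaboration of the same observation.
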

\begin{proof}
Since operating points are assumed to be positive, we have $\tilde V_L>\mb 0$. Hence
\begin{align*}
\tilde P_c = P_c(\tilde V_L) = [\tilde V_L]Y_{LL}(V_L^* - \tilde V_L) \ge \mb 0
\end{align*}
if and only if $Y_{LL}(V_L^* - \tilde V_L) = \mc I_L^* - Y_{LL}\tilde V_L \ge \mb 0$, where we used \eqref{eqn:open-circuit voltages}.
\end{proof}

Figure~\ref{figure:two nodes voltage domain} illustrates the location of these operating points in the voltage domain.
\begin{figure}%
  \centering
  \includegraphics[width=1\linewidth]{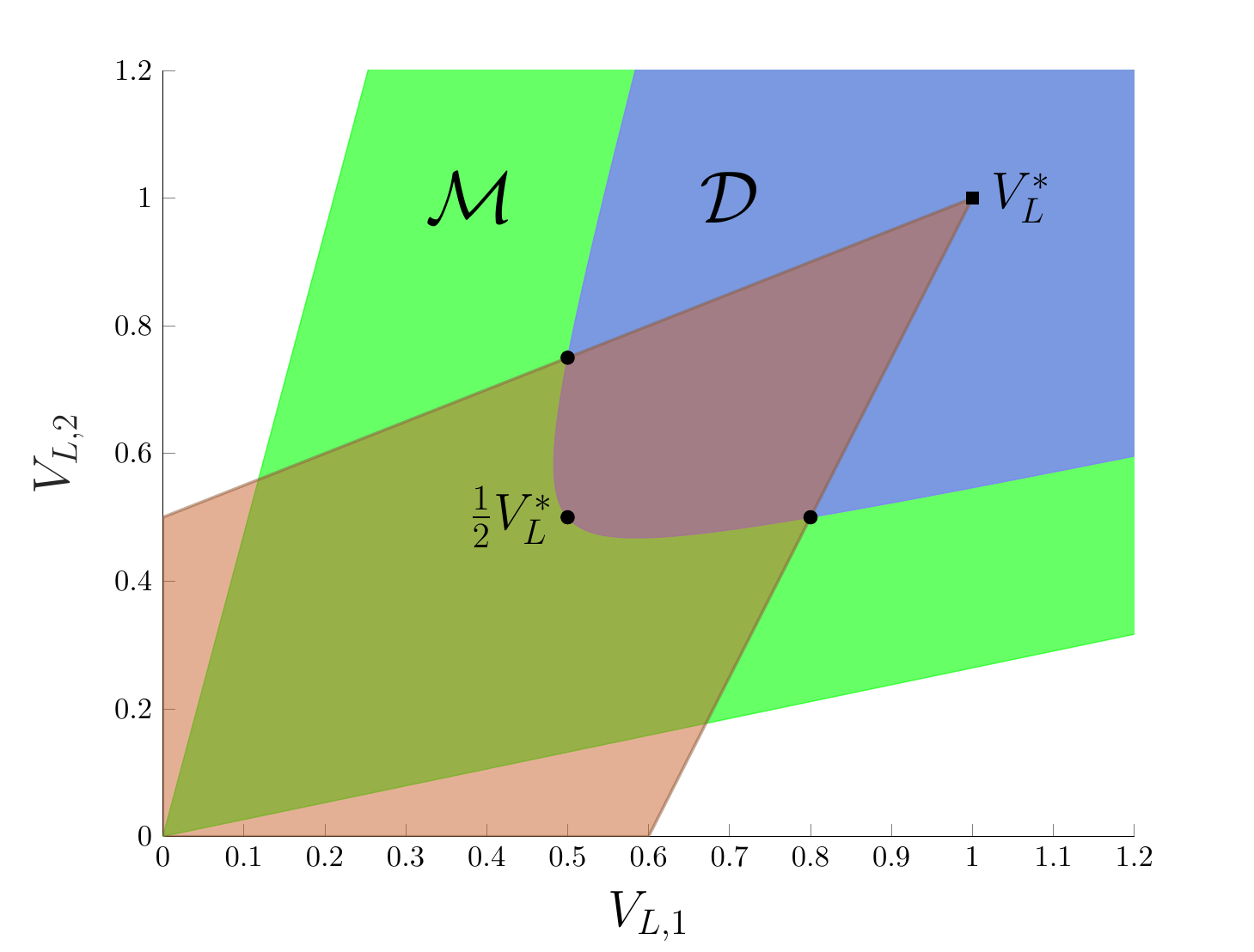}
  \caption{\label{figure:two nodes voltage domain}
A plot of the voltage domain for a power grid with two load nodes. 
The blue area corresponds to the set $\mc D$ of long-term voltage stable operating points.
The brown area indicates the operating points corresponding to a nonnegative power demand.
The green area corresponds to the vectors in $\mc M$, which contains the set $\cl{\mc D}$.
The black operating points correspond to the black power demands in Figure~\ref{figure:polyhedral sufficient condition 2}.}
\end{figure}

Lemma~\ref{lemma:nonnegative power demands} shows that all operating points corresponding to a positive power demand lie in the polyhedral set 
\begin{align}\label{eqn:operating points nonnegative power demand}
\set{\tilde V_L\in\RR^n}{\tilde V_L>\mb 0,~Y_{LL}\tilde V_L\le \mc I_L^*}.
\end{align}
Note that equality holds in $Y_{LL}\tilde V_L\le \mc I_L^*$ if and only if $\tilde V_L=V_L^*$, which corresponds to the power demand $\tilde P_c = \mb 0$.
The next result shows that the vector of open-circuit voltages $V_L^*$ element-wise strictly dominates all operating points corresponding to a nonzero nonnegative power demand. %
\begin{corollary}\label{corollary:domination open-circuit voltages}
Let $\tilde P_c\neq \mb 0$ be a nonnegative feasible power demand, then any operating point $\tilde V_L$ associated to $\tilde P_c$ satisfies $\tilde V_L < V_L^*$. Hence, \eqref{eqn:operating points nonnegative power demand} is bounded.
\end{corollary}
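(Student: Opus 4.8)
The plan is to reduce the statement to the entrywise positivity of $Y_{LL}\inv$. First I would set $w := V_L^* - \tilde V_L$ and rewrite the defining relation \eqref{eqn:definition of f}: since $\tilde V_L$ is an operating point associated to $\tilde P_c$ we have $\tilde P_c = [\tilde V_L]Y_{LL}(V_L^* - \tilde V_L) = [\tilde V_L]Y_{LL}w$, and because operating points are strictly positive the diagonal matrix $[\tilde V_L]$ is invertible, so that $Y_{LL}w = [\tilde V_L]\inv \tilde P_c$. As $\tilde P_c$ is nonnegative and nonzero ($\tilde P_c\gneqq \mb 0$) and $[\tilde V_L]\inv$ is a positive diagonal matrix, the right-hand side is again nonnegative and nonzero, \ie\ $Y_{LL}w \gneqq \mb 0$.

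The key step is then to invert $Y_{LL}$. Recall from Part~I (and from the connectivity of the grid) that $Y_{LL}$ is an irreducible nonsingular M-matrix, so that $Y_{LL}\inv$ is entrywise \emph{strictly} positive. Applying a strictly positive matrix to the nonnegative nonzero vector $[\tilde V_L]\inv \tilde P_c$ yields a strictly positive vector, since each component of $w = Y_{LL}\inv [\tilde V_L]\inv \tilde P_c$ is a sum $\sum_j (Y_{LL}\inv)_{ij}([\tilde V_L]\inv \tilde P_c)_j$ of nonnegative terms, at least one of which is strictly positive. Hence $w = V_L^* - \tilde V_L > \mb 0$, which is exactly $\tilde V_L < V_L^*$.

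For the boundedness claim I would argue analogously but need only the non-strict statement. Any $\tilde V_L$ in the set \eqref{eqn:operating points nonnegative power demand} satisfies $Y_{LL}(V_L^* - \tilde V_L) = \mc I_L^* - Y_{LL}\tilde V_L \ge \mb 0$, so that $V_L^* - \tilde V_L = Y_{LL}\inv(\mc I_L^* - Y_{LL}\tilde V_L) \ge \mb 0$ because $Y_{LL}\inv \ge \mb 0$; together with $\tilde V_L > \mb 0$ this confines the set to the box $\set{V_L}{\mb 0 < V_L \le V_L^*}$, which is bounded.

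The main obstacle is justifying the entrywise \emph{strict} positivity of $Y_{LL}\inv$, which is where irreducibility (network connectivity) is essential; without it one would only obtain $Y_{LL}\inv \ge \mb 0$ and hence the weaker conclusion $\tilde V_L \le V_L^*$, losing the strict domination. The remaining manipulations are routine, and the strictness of the inequality is driven entirely by the hypothesis $\tilde P_c \neq \mb 0$ combined with this positivity.
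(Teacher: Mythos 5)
Your proof is correct and follows essentially the same route as the paper: establish $Y_{LL}(V_L^*-\tilde V_L)\gneqq\mb 0$ and then multiply by the entrywise strictly positive inverse of the irreducible nonsingular M-matrix $Y_{LL}$ (Fiedler, Thm.~5.12). The only cosmetic difference is that you obtain the nonzero-nonnegativity directly from $Y_{LL}(V_L^*-\tilde V_L)=[\tilde V_L]\inv\tilde P_c\gneqq\mb 0$, whereas the paper combines Lemma~\ref{lemma:nonnegative power demands} with the fact that $\tilde V_L\neq V_L^*$ because $V_L^*$ is the operating point of the zero power demand; both are equally valid.
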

\begin{proof}
The matrix $Y_{LL}$ is an irreducible M-matrix, and hence its inverse is positive by \cite[Thm. 5.12]{fiedler1986special}. 
By Lemma~\ref{lemma:nonnegative power demands} we have $Y_{LL}(V_L^* - \tilde V_L) \ge \mb 0$. 
Since $P_c(V_L^*)=\mb 0$ and $\tilde P_c \neq \mb 0$ it follows that $\tilde V_L \neq V_L^*$ and therefore 
$Y_{LL}(V_L^* - \tilde V_L) \gneqq \mb 0$.
Multiplying this inequality by the positive matrix ${Y_{LL}}\inv$ implies that $V_L^* - \tilde V_L > \mb 0$.
\end{proof}

Using Theorem~\ref{theorem:convexity of F} we present a geometric characterization of $\mc F\cap \mc N$.
\begin{lemma}\label{lemma:geometric characterization nonnegative F}
The set $\mc F \cap \mc N$ is closed, convex, bounded, and is the intersection over all $\lambda\in\Lambda_1$ of the half-spaces $H_\lambda$ for which $P(\varphi(\lambda))$ is nonnegative, \ie,
\begin{align*}
\mc F\cap\mc N = \mc N \cap \bigcap_{\lambda\in\Lambda_1:\;P_c(\varphi(\lambda))\ge \mb 0} H_\lambda.
\end{align*}
\end{lemma}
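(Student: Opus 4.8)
The plan is to dispatch the three topological claims quickly and then spend the effort on the half-space identity. Closedness and convexity are immediate: $\mc F$ is closed and convex by Theorem~\ref{theorem:convexity of F}, and $\mc N$ is a closed convex (polyhedral) cone, so $\mc F\cap\mc N$ is closed and convex. For boundedness I would invoke Corollary~\ref{corollary:domination open-circuit voltages}: every operating point $\tilde V_L$ associated to a nonzero $\tilde P_c\in\mc F\cap\mc N$ satisfies $\mb 0<\tilde V_L<V_L^*$, while $\tilde P_c=\mb 0$ corresponds to $\tilde V_L=V_L^*$. Hence all relevant operating points lie in the compact box $B:=\{V:\mb 0\le V\le V_L^*\}$, and since $\tilde P_c=P_c(\tilde V_L)$ with $P_c$ continuous, $\mc F\cap\mc N$ is contained in the compact image $P_c(B)$ and is therefore bounded.

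For the identity, the easy inclusion comes first. By Theorem~\ref{theorem:convexity of F}, $\mc F=\bigcap_{\lambda\in\Lambda_1}H_\lambda$, so
\begin{align*}
\mc F\cap\mc N=\mc N\cap\bigcap_{\lambda\in\Lambda_1}H_\lambda\subset\mc N\cap\bigcap_{\lambda\in\Lambda_1:\,P_c(\varphi(\lambda))\ge\mb 0}H_\lambda,
\end{align*}
because the right-hand side intersects over fewer half-spaces. The whole content is the reverse inclusion, i.e.\ showing that the half-spaces $H_\lambda$ with $P_c(\varphi(\lambda))\not\ge\mb 0$ are redundant once one restricts to $\mc N$.

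For the reverse inclusion I would argue by contradiction along a ray from the origin. First I would record that $\mb 0\in\inter{\mc F}$: at $V_L^*$ the matrix $-\tpdd{P_c}{V_L}(V_L^*)=[V_L^*]Y_{LL}$ is a Z-matrix with nonnegative inverse $Y_{LL}\inv[V_L^*]\inv$, hence a nonsingular M-matrix, so $V_L^*\in\mc D$ by Lemma~\ref{lemma:D m-matrix} and $\mb 0=P_c(V_L^*)\in\inter{\mc F}$ by Corollary~\ref{corollary:interior F one-to-one D}; moreover $\mb 0\in\mc N$. Now take $y\in\mc N$ lying in every $H_\lambda$ with $P_c(\varphi(\lambda))\ge\mb 0$, and suppose $y\notin\mc F$. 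By convexity and $\mb 0\in\mc F$ the set $\{t\in[0,1]:ty\in\mc F\}$ equals $[0,T]$ for some $T\in(0,1)$ (positive because $\mb 0\in\inter{\mc F}$, strictly below $1$ because $y\notin\mc F$ and $\mc F$ is closed), so $z:=Ty\in\partial\mc F$, and $z\in\mc N$ since $z=Ty$ with $T\ge0$ and $y\ge\mb 0$. Applying Theorem~\ref{theorem:one-to-one boundary} and the parametrization of $\partial\mc D$ in Theorem~\ref{theorem:parametrization of D}, the unique operating point of $z$ is $\varphi(\lambda_z)$ for some $\lambda_z\in\Lambda_1$, so $z=P_c(\varphi(\lambda_z))\ge\mb 0$ and $\lambda_z$ is one of the retained indices.

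Finally I would turn the crossing into a violated constraint. The supporting identity $\lambda\T P_c(\varphi(\lambda))=\|\varphi(\lambda)\|_{h(\lambda)}^2$ holds for every $\lambda\in\Lambda_1$ — it follows by a short computation from $[\lambda]\mc I_L^*=2h(\lambda)\varphi(\lambda)$, from $Y_{LL}V_L^*=\mc I_L^*$, and from the symmetry of $Y_{LL}$ — so in particular $\lambda_z\T z=\|\varphi(\lambda_z)\|_{h(\lambda_z)}^2$. Since operating points are positive we have $\varphi(\lambda_z)>\mb 0$, hence $\|\varphi(\lambda_z)\|_{h(\lambda_z)}^2>0$; combined with $z=Ty$ and $T\in(0,1)$ this gives
\begin{align*}
\lambda_z\T y=\tfrac1T\,\lambda_z\T z=\tfrac1T\,\|\varphi(\lambda_z)\|_{h(\lambda_z)}^2>\|\varphi(\lambda_z)\|_{h(\lambda_z)}^2,
\end{align*}
so $y\notin H_{\lambda_z}$, contradicting that $y$ lies in every retained half-space. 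Therefore $y\in\mc F$, which proves the reverse inclusion. The main obstacle is precisely this last mechanism: one must guarantee both that the boundary-crossing point $z$ is itself a nonnegative demand (so its supporting half-space is retained) and that $\|\varphi(\lambda_z)\|_{h(\lambda_z)}^2$ is strictly positive (so the ray factor $1/T>1$ produces a strict violation); both hinge on $\mb 0\in\inter{\mc F}$ and on the strict positivity of operating points.
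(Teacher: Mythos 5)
Your proof is correct, but it reaches the key identity by a genuinely different route than the paper. For the set-theoretic identity the paper argues abstractly: since $\mc F\cap\mc N$ is closed and convex it equals the intersection of its supporting half-spaces, and the Part~I theorem on supporting half-spaces of $\mc F$ (which shows $P_c(\varphi(\lambda))$ is the \emph{unique} point of support of $H_\lambda$) is invoked to conclude that $H_\lambda$ supports $\mc F\cap\mc N$ exactly when $P_c(\varphi(\lambda))\ge\mb 0$. You instead prove the nontrivial inclusion directly by a ray argument: starting from $\mb 0\in\inter{\mc F}$ (correctly justified via $-\tpdd{P_c}{V_L}(V_L^*)=[V_L^*]Y_{LL}$ being a nonsingular M-matrix, Lemma~\ref{lemma:D m-matrix} and Corollary~\ref{corollary:interior F one-to-one D}), you locate the boundary crossing $z=Ty\in\partial\mc F\cap\mc N$, identify its supporting index $\lambda_z$ through Theorems~\ref{theorem:one-to-one boundary} and \ref{theorem:parametrization of D}, and show the retained constraint $H_{\lambda_z}$ is strictly violated at $y$ because $\lambda_z\T y=\tfrac1T\|\varphi(\lambda_z)\|_{h(\lambda_z)}^2$ with $1/T>1$ and the support value strictly positive. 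This is more self-contained and arguably more airtight on the point the paper treats tersely (why the non-retained half-spaces are redundant once one intersects with $\mc N$), at the cost of being longer; the paper's version is shorter because it leans on the Part~I supporting-half-space machinery. Your boundedness argument also differs (compactness of the box $\mb 0\le V_L\le V_L^*$ of operating points via Corollary~\ref{corollary:domination open-circuit voltages} and continuity of $P_c$, versus the paper's one-line observation that $\mc F\cap\mc N\subset H_{\mb 1}\cap\mc N$ with the latter bounded); both are valid.
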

\begin{proof}
The set $\mc F\cap \mc N$ is the intersection of closed convex sets, and is therefore closed and convex.
It follows from \eqref{eqn:Pmax half-space} that $\mc F\cap \mc N\subset H_{\mb 1} \cap \mc N$. The set $\mc F\cap \mc N$ is bounded since $H_{\mb 1} \cap \mc N$ is %
bounded.
It follows from Theorem~\ref{theorem:convexity of F} that
\begin{align*}
\mc F\cap\mc N = \mc N \cap \bigcap_{\lambda\in\Lambda_1} H_\lambda.
\end{align*}
Since $\mc F\cap \mc N$ is closed and convex, it coincides with the intersection of its supporting half-spaces (see Section~\PIref{subsection:convex hull of F}). Theorem~\PIref{theorem:supporting half-spaces of F} identifies all supporting half-spaces of $\mc F$, and in particular shows that $P_c(\varphi(\lambda))$ is the unique point of support associated to the half-space $H_\lambda$. By definition, $H_\lambda$ is also a supporting half-space for $\mc F\cap\mc N$ if and only if $P_c(\varphi(\lambda))\in \mc F\cap \mc N$, which is equivalent to $P_c(\varphi(\lambda))\ge \mb 0$.
\end{proof}

The power demands $P_c(\varphi(\lambda))$ for $\lambda\in\Lambda_1$ describe the boundary of $\mc F$ (see Theorem~\ref{theorem:one-to-one boundary} and Theorem~\ref{theorem:parametrization of D}).
Lemma~\ref{lemma:geometric characterization nonnegative F} characterizes all nonnegative feasible power demands in terms of the boundary in the nonnegative orthant (\ie, $\partial F\cap \mc N$). 
In its current form, this requires the identification of all $\lambda$ such that $\lambda\in\Lambda$ and $P_c(\varphi(\lambda))\ge \mb 0$, which is a nontrivial computational problem.
In the remainder of this section we deduce an alternative parametrization of the boundary of $\mc F$ in the nonnegative orthant. This parametrization leads to a more constructive description of all such $\lambda$.

\subsection{An alternative parametrization of $\mc D$}\label{subsection:alternative parametrization}
In order to parametrize the boundary of $\mc F$ in the nonnegative orthant, we study the set $\mc D$ of long-term voltage stable operating points in more detail.
In Part~I of this paper we have parametrized the set $\mc D$ and its boundary by means of the set $\Lambda_1$ (see Theorem~\ref{theorem:parametrization of D}). In the following we present an alternative parametrization of $\mc D$, which is dual to the parametrization in Theorem~\ref{theorem:parametrization of D}, in the sense that we parametrize $\mc D$ by the (right) Perron vector of $-\pdd {P_c}{V_L}(\tilde V_L)$ instead of its transpose $-\pdd {P_c}{V_L}(\tilde V_L)\T$.

We introduce the following definitions. For a vector $\mu\in\RR^n$ we introduce the notation
\begin{align}\label{eqn:definition of g}
g(\mu) := [\mu]Y_{LL} + [Y_{LL} \mu].
\end{align}
Note that $g(\mu)$ is linear in $\mu$, and that for any vector $v$ we have $g(\mu)v = g(v)\mu$.
By using \eqref{eqn:jacobian of P_c} and \eqref{eqn:open-circuit voltages} we observe that
\begin{align}
\pdd {P_c}{V_L}(\tilde V_L) &= [Y_{LL} V_L^*] - [\tilde V_L]Y_{LL} - [Y_{LL} \tilde V_L] \nolabel\\
&= [\mc I_L^*] - g(\tilde V_L).\label{eqn:rewrite of jacobian of P_c}
\end{align}
Analogous to $\Lambda$ we define the set
\begin{align*}
\mc M &:= \set{\mu}{g(\mu)\text{ is a nonsingular M-matrix}}.%
\end{align*}
Appendix~\ref{appendix:properties of M} lists several properties of the set $\mc M$. 
In \nobreak{particular}, Lemma~\ref{lemma:positivity of M} shows that $\mc M$ is an open cone which lies in the positive orthant, and that $\mc M$ is simply connected.

Recall that Z-matrices, M-matrices and irreducible matrices were defined in Definitions~\ref{definition:Z-matrix}-\ref{definition:irreducible matrix}.
Appendix~\PIref{subsection:matrix theory} lists multiple properties of such matrices.
Recall the following proposition from Part I of this paper.

\begin{proposition}[Proposition~3.1 of Part I]\label{proposition:Z-matrix perron result}
Let $A$ be an irreducible Z-matrix. There is a unique eigenvalue $r$ of $A$ with smallest (\ie, ``most negative'') real part. The eigenvalue $r$, known as the \emph{Perron root}, is real and simple. A corresponding eigenvector $v$, known as a \emph{Perron vector}, is unique up to scaling, and can be chosen such that $v>\mb 0$.
\end{proposition}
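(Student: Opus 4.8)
The plan is to reduce the statement to the classical Perron--Frobenius theorem for irreducible nonnegative matrices by means of a diagonal shift. First I would pick a scalar $s\in\RR$ large enough that $B:=sI-A$ is entry-wise nonnegative. Since $A$ is a Z-matrix, the off-diagonal entries $B_{ij}=-A_{ij}$ for $i\neq j$ are already nonnegative, so it suffices to take $s\ge\max_i A_{ii}$ to also render the diagonal nonnegative. Because irreducibility is a property of the off-diagonal zero pattern alone, and $B$ and $A$ share the same off-diagonal pattern (up to sign), $B$ is an irreducible nonnegative matrix.

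Next I would invoke the Perron--Frobenius theorem for irreducible nonnegative matrices (see, e.g., \cite{fiedler1986special}): the spectral radius $\rho(B)$ is a simple eigenvalue of $B$, it is real and positive, and it admits an eigenvector $v>\mb 0$ that is unique up to scaling. The additional point I need is that $\rho(B)$ is the unique eigenvalue of $B$ of largest real part. This follows from the bound $\operatorname{Re}(\beta)\le|\beta|\le\rho(B)$, valid for every eigenvalue $\beta$ of $B$; the equality $\operatorname{Re}(\beta)=\rho(B)$ then forces $|\beta|=\rho(B)$ together with $\operatorname{Re}(\beta)=|\beta|$, whence $\beta=\rho(B)$.

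Finally I would translate these conclusions back to $A$. The eigenvalues of $A$ are exactly $s-\beta$ as $\beta$ ranges over the eigenvalues of $B$, and an eigenvalue of $A$ has smallest real part precisely when the corresponding eigenvalue of $B$ has largest real part. Hence $r:=s-\rho(B)$ is the unique eigenvalue of $A$ of smallest real part; it is real and simple because $\rho(B)$ is, and since $A$ and $B$ share eigenvectors, the associated eigenvector is the Perron vector $v>\mb 0$, unique up to scaling. The conclusion is independent of the particular $s$ chosen, as a change of $s$ shifts every eigenvalue by the same real constant and leaves the eigenvectors unchanged.

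I do not anticipate a serious obstacle, as the argument is classical. The one point requiring care is the uniqueness of the eigenvalue of largest real part for $B$, which for a merely irreducible (as opposed to primitive) nonnegative matrix is subtler than the corresponding statement about the eigenvalue of largest modulus: the latter need not be unique when $B$ is imprimitive, whereas the real-part statement is, as shown above, unconditional.
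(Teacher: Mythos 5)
Your argument is correct and is essentially the same route the paper takes: Part~II only quotes this as Proposition~3.1 of Part~I, and the underlying proof there is the classical reduction of an irreducible Z-matrix to an irreducible nonnegative matrix via the shift $B = sI - A$, followed by the Perron--Frobenius theorem (as in the cited reference \cite{fiedler1986special}). Your observation that uniqueness of the eigenvalue of largest \emph{real part} survives imprimitivity, via $\operatorname{Re}(\beta)\le|\beta|\le\rho(B)$ with equality forcing $\beta=\rho(B)$, is exactly the point that needs care, and you handle it correctly.
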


The next lemma relates the Perron root and Perron vector of the Jacobian of $P_c$ to the set $\mc M$.
\begin{lemma}\label{lemma:jacobian of P_c M-matrix}
Let $r\in\RR$ and $\mu\in\RR^n$ such that $r\ge \mb 0$ and $\mu>\mb 0$. %
The Jacobian $-\pdd {P_c}{V_L}(\tilde V_L)$ is an irreducible M-matrix with Perron root $r$ and Perron vector $\mu$ %
if and only if $g(\mu)$ is an M-matrix (\ie, $\mu\in\mc M$) and $\tilde V_L$ satisfies
\begin{align}\label{eqn:jacobian of P_c M-matrix}
\tilde V_L = g(\mu)\inv [\mu](\mc I_L^* + r \mb 1).
\end{align}
\end{lemma}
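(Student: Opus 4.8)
The plan is to convert the Perron eigenrelation into the stated formula by exploiting the bilinearity of $g$. Starting from \eqref{eqn:rewrite of jacobian of P_c}, the relation $-\pdd{P_c}{V_L}(\tilde V_L)\mu = r\mu$ reads $(g(\tilde V_L) - [\mc I_L^*])\mu = r\mu$. Using the commutation identity $g(\tilde V_L)\mu = g(\mu)\tilde V_L$ together with $[\mc I_L^*]\mu = [\mu]\mc I_L^*$ and $r\mu = r[\mu]\mb 1$, this rearranges to $g(\mu)\tilde V_L = [\mu](\mc I_L^* + r\mb 1)$. This single identity is the bridge between the two characterizations: whenever $g(\mu)$ is invertible (in particular when $\mu\in\mc M$) it is equivalent to \eqref{eqn:jacobian of P_c M-matrix}. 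Thus the proof reduces to translating the Perron data into this linear identity and matching the nonsingular-M-matrix property of $g(\mu)$ with the M-matrix property of the Jacobian.

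For the direction assuming $\mu\in\mc M$ and \eqref{eqn:jacobian of P_c M-matrix}, I would argue as follows. Since $\mu>\mb 0$ and $g(\mu)$ inherits the off-diagonal sign pattern of $Y_{LL}$, the matrix $g(\mu)$ is an irreducible nonsingular M-matrix, so $g(\mu)\inv>\mb 0$ by \cite[Thm.\ 5.12]{fiedler1986special}. As $\mc I_L^*\gneqq\mb 0$ by Lemma~\ref{lemma:positive open-circuit voltages} and $r\ge 0$, the vector $[\mu](\mc I_L^*+r\mb 1)$ is nonnegative and nonzero, whence $\tilde V_L = g(\mu)\inv[\mu](\mc I_L^*+r\mb 1)>\mb 0$. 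Consequently $-\pdd{P_c}{V_L}(\tilde V_L) = g(\tilde V_L)-[\mc I_L^*]$ is an irreducible Z-matrix, and reversing the computation of the first paragraph gives $-\pdd{P_c}{V_L}(\tilde V_L)\mu = r\mu$ with $\mu>\mb 0$. By Proposition~\ref{proposition:Z-matrix perron result} a positive eigenvector of an irreducible Z-matrix must be the Perron vector, so $\mu$ is the Perron vector and $r$ the Perron root; since $r\ge 0$ the Jacobian is an M-matrix.

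For the converse I would first observe that irreducibility of the Jacobian forces $\tilde V_L>\mb 0$: its off-diagonal entries are $(\tilde V_L)_i (Y_{LL})_{ij}$, which are nonpositive (Z-matrix) and, by irreducibility, nonzero in each row, forcing $(\tilde V_L)_i>0$. The eigenrelation then yields $g(\mu)\tilde V_L = [\mu](\mc I_L^*+r\mb 1)\gneqq\mb 0$, and it remains to prove $g(\mu)$ is a nonsingular M-matrix. Here $g(\mu)$ is an irreducible Z-matrix and the strictly positive vector $\tilde V_L$ satisfies $g(\mu)\tilde V_L\gneqq\mb 0$; writing $g(\mu)=sI-P$ with $P\ge\mb 0$ irreducible and applying the Perron--Frobenius subinvariance theorem gives $\rho(P)<s$, i.e.\ $g(\mu)\in\mc M$, after which \eqref{eqn:jacobian of P_c M-matrix} follows by inverting $g(\mu)$. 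I expect this last step to be the main obstacle, precisely in the boundary case $r=0$: there $g(\mu)\tilde V_L = [\mu]\mc I_L^*$ is merely nonnegative and nonzero rather than strictly positive, so the conclusion genuinely relies on irreducibility and on the strict part of the subinvariance theorem to rule out the singular-M-matrix case (a strictly positive right-hand side, available when $r>0$, would make nonsingularity immediate).
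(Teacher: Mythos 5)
Your proposal is correct and follows essentially the same route as the paper: the same bridge identity $g(\mu)\tilde V_L=[\mu](\mc I_L^*+r\mb 1)$ obtained from $g(\tilde V_L)\mu=g(\mu)\tilde V_L$, and your appeal to the Perron--Frobenius subinvariance theorem in the forward direction is exactly the paper's step of multiplying by the left Perron vector of $g(\mu)$ and using $\mc I_L^*\gneqq\mb 0$ to get strict positivity of the Perron root even when $r=0$. Your explicit unpacking of the reverse direction (positivity of $\tilde V_L$ via $g(\mu)\inv>\mb 0$, then identifying $(r,\mu)$ as the Perron pair because a positive eigenvector of an irreducible Z-matrix must be the Perron vector) is what the paper leaves as ``reversing the steps.''
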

\begin{proof}
($\Rightarrow$): 
The matrix $Y_{LL}$ is an irreducible Z-matrix and $\mu>\mb 0$, and so $g(\mu)$ is an irreducible Z-matrix by Propositions~\ref{P1-proposition:diagonal properties} and \ref{P1-proposition:sum of irreducible Z-matrices} of Part I. We let $s$ and $v>\mb 0$ denote respectively the Perron root and Perron vector of $g(\mu)$. The matrix $-\pdd {P_c}{V_L}(\tilde V_L)$ is an M-matrix, and therefore a Z-matrix. Lemma~\PIref{lemma:jacobian Z-matrix} states that $-\pdd {P_c}{V_L}(\tilde V_L)$ is a Z-matrix if and only if $V_L^*>\mb 0$, and so $\tilde V_L> \mb 0$.
Using the fact that $(r,\mu)$ is an eigenpair to $-\pdd {P_c}{V_L} (\tilde V_L)$ and substituting \eqref{eqn:rewrite of jacobian of P_c}, we observe that 
\begin{align*}
r\mu = - \pdd {P_c}{V_L} (\tilde V_L) \mu = g(\tilde V_L)\mu - [\mc I_L^*]\mu = g(\mu)\tilde V_L - [\mu]\mc I_L^*.
\end{align*}
By rearranging terms it follows that 
\begin{align}\label{eqn:jacobian of P_c M-matrix:1}
[\mu]\mc I_L^* + r\mu = g(\mu)\tilde V_L.
\end{align}
Multiplying \eqref{eqn:jacobian of P_c M-matrix:1} by $v\T$ results in 
\begin{align}\label{eqn:jacobian of P_c M-matrix:2}
v\T ([\mu]\mc I_L^* + r\mu) = v\T g(\mu)\tilde V_L = s v\T \tilde V_L.
\end{align}
Note that $\tilde V_L>\mb 0$, $v>\mb 0$, $\mu>\mb 0$, $r\ge 0$ and $\mc I_L^* \gneqq \mb 0$. It follows that the left hand side of \eqref{eqn:jacobian of P_c M-matrix:2} is positive. Since $v\T \tilde V_L$ is also positive, we deduce that the Perron root $s$ is positive. This means that $g(\mu)$ is a nonsingular M-matrix (\ie, $\mu\in\mc M$), and that \eqref{eqn:jacobian of P_c M-matrix} follows from \eqref{eqn:jacobian of P_c M-matrix:1}.

($\Leftarrow$): If $\mu\in\mc M$, then $\mu>\mb 0$ by Lemma~\ref{lemma:positivity of M}. The rest of the proof follows by reversing the steps of the ``$\Rightarrow$''-part.
\end{proof}

Note that \eqref{eqn:jacobian of P_c M-matrix} is invariant under scaling of $\mu$, and since $\mc M$ is a cone we may normalize $\mu$. 
For this purpose we define 
\begin{align*}
\mc M_1 &:= \mc M \cap \set{\mu}{\|\mu\|_1 = 1} = \mc M \cap \set{\mu}{\mb 1\T \mu = 1}.
\end{align*}

Lemma~\ref{lemma:jacobian of P_c M-matrix} and Lemma~\ref{lemma:D m-matrix} give rise to an alternative parametrization of $\mc D$.
\begin{theorem}[\mainresult{alternative parametrization of D}]\label{theorem:alternative parametrization of D}
The set $\mc D$ of long-term voltage stable operating points, its closure $\cl{\mc D}$, and its boundary $\partial\mc D$ are parametrized by
\begin{align*}
\mc D &= \set{g(\mu)\inv [\mu](\mc I_L^* + r \mb 1) }{\mu\in\mc M_1, r>0};\\
\cl{\mc D} &= \set{g(\mu)\inv [\mu](\mc I_L^* + r \mb 1) }{\mu\in\mc M_1, r\ge 0};\\
\partial\mc D &= \set{g(\mu)\inv [\mu]\mc I_L^* }{\mu\in\mc M_1}.
\end{align*}
Furthermore, the map
\begin{align*}
(\mu,r)\mapsto g(\mu)\inv [\mu](\mc I_L^* + r \mb 1)
\end{align*}
from $\mc M_1 \times \RR_{\ge 0}$ to $\cl{\mc D}$ is a bicontinuous map.
\end{theorem}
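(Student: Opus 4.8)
The three set equalities follow almost immediately from combining Lemma~\ref{lemma:D m-matrix} with Lemma~\ref{lemma:jacobian of P_c M-matrix}. The plan is to read off from Lemma~\ref{lemma:D m-matrix} that $\tilde V_L\in\mc D$ precisely when $-\pdd{P_c}{V_L}(\tilde V_L)$ is a nonsingular M-matrix, $\tilde V_L\in\cl{\mc D}$ when it is an M-matrix, and $\tilde V_L\in\partial\mc D$ when it is a singular M-matrix. Since $Y_{LL}$ is irreducible, $-\pdd{P_c}{V_L}(\tilde V_L)$ is an irreducible Z-matrix (its off-diagonal sign pattern matches $-Y_{LL}$, via \eqref{eqn:rewrite of jacobian of P_c}), so Proposition~\ref{proposition:Z-matrix perron result} gives a well-defined Perron root $r\ge 0$ and a positive Perron vector $\mu$, unique up to scaling. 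By Lemma~\ref{lemma:jacobian of P_c M-matrix} this is equivalent to $\mu\in\mc M$ together with $\tilde V_L = g(\mu)\inv[\mu](\mc I_L^*+r\mb 1)$. Normalizing $\mu$ to lie in $\mc M_1$ (legitimate because \eqref{eqn:jacobian of P_c M-matrix} is scale-invariant in $\mu$ and $\mc M$ is a cone) then yields each of the three set descriptions, with the nonsingular/singular dichotomy translating into $r>0$ versus $r=0$: an M-matrix is nonsingular iff its Perron root is strictly positive.

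\textbf{The bicontinuity claim.} It remains to show that the map
\begin{align*}
\Phi:(\mu,r)\mapsto g(\mu)\inv[\mu](\mc I_L^* + r\mb 1)
\end{align*}
is a homeomorphism from $\mc M_1\times\RR_{\ge 0}$ onto $\cl{\mc D}$. Continuity of $\Phi$ is straightforward: $g(\mu)$ is linear in $\mu$ by \eqref{eqn:definition of g}, matrix inversion is continuous on the set of nonsingular matrices, and $g(\mu)$ is invertible throughout $\mc M_1$ (its elements are nonsingular M-matrices, hence nonsingular). Surjectivity is exactly the set equality for $\cl{\mc D}$ just established. The two remaining ingredients are \emph{injectivity} and \emph{continuity of the inverse}, and the hard part will be injectivity.

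\textbf{Injectivity.} I would argue that the pair $(\mu,r)$ is recovered from $\tilde V_L=\Phi(\mu,r)$ by the spectral data of $-\pdd{P_c}{V_L}(\tilde V_L)$: given $\tilde V_L\in\cl{\mc D}$, form this Jacobian via \eqref{eqn:rewrite of jacobian of P_c}, which is an irreducible Z-matrix determined entirely by $\tilde V_L$. By Proposition~\ref{proposition:Z-matrix perron result} its Perron root $r$ is unique, and its Perron vector is unique up to scaling, hence unique in $\mc M_1$ after $\ell_1$-normalization. By the forward direction of Lemma~\ref{lemma:jacobian of P_c M-matrix}, any preimage $(\mu,r)$ of $\tilde V_L$ must have $\mu$ equal to this normalized Perron vector and $r$ equal to this Perron root. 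Thus $\Phi$ is injective, and moreover the inverse map $\tilde V_L\mapsto(\mu,r)$ is given explicitly by taking the Perron root and normalized Perron vector of the matrix $[\mc I_L^*]-g(\tilde V_L)$.

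\textbf{Continuity of the inverse.} Finally, continuity of $\Phi\inv$ follows because the Perron root and Perron vector depend continuously on the matrix entries, which in turn depend continuously (indeed linearly, via \eqref{eqn:rewrite of jacobian of P_c}) on $\tilde V_L$. For the Perron root this is standard continuity of a simple eigenvalue; for the normalized Perron vector one uses that the Perron root is simple (Proposition~\ref{proposition:Z-matrix perron result}), so the associated eigenprojection varies continuously, and $\ell_1$-normalization of the positive eigenvector is continuous. I expect injectivity to be the only genuinely substantive step, and it is handled cleanly by the uniqueness in Proposition~\ref{proposition:Z-matrix perron result}; the continuity statements are routine once the inverse is identified with the spectral data.
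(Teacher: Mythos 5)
Your proposal is correct and follows exactly the route the paper intends: the paper derives this theorem by combining Lemma~\ref{lemma:D m-matrix} with Lemma~\ref{lemma:jacobian of P_c M-matrix} (it omits the details as analogous to the proof of the dual parametrization in Part~I), and your recovery of $(\mu,r)$ as the normalized Perron data of $[\mc I_L^*]-g(\tilde V_L)$ is the right mechanism for injectivity and continuity of the inverse. The only cosmetic slip is that passing from ``$(\mu,r)$ is a preimage of $\tilde V_L$'' to ``$(\mu,r)$ is the Perron root and Perron vector of the Jacobian'' uses the ``$\Leftarrow$'' direction of Lemma~\ref{lemma:jacobian of P_c M-matrix}, not the forward one.
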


The proof of Theorem~\ref{theorem:alternative parametrization of D} is analogous to the proof of Theorem~\PIref{theorem:parametrization of D}, and is therefore omitted.

To simplify notation, we define for $\mu\in\mc M$ the map
\begin{align}\label{eqn:definition of psi}
\psi(\mu) := g(\mu)\inv [\mu] \mc I_L^*.
\end{align}
Note that Theorem~\ref{theorem:alternative parametrization of D} implies that $\psi(\mc M_1) = \partial \mc D$, which is a parametrization of the boundary of $\mc D$.

Figure~\ref{figure:two nodes voltage domain} illustrates that $\cl{\mc D}$ is in fact a subset of $\mc M_1$, which is shown is Lemma~\ref{lemma:D subset of M}.

Theorem~\ref{theorem:parametrization of D} and Theorem~\ref{theorem:alternative parametrization of D} present two different parametrizations of $\partial \mc D$. The next lemma relates these two parametrizations, and will be instrumental for identifying which $\lambda\in\Lambda$ satisfy $P_c(\varphi(\lambda))\ge \mb 0$ in Lemma~\ref{lemma:geometric characterization nonnegative F}.
\begin{lemma}\label{lemma:duality boundary D}
Let $\tilde V_L\in\partial \mc D$, then there exist 
\begin{enumerate}
\item a unique vector $\lambda\in\Lambda_1$ such that $\tilde V_L = \varphi(\lambda)$;
\item a unique vector $\mu\in\mc M_1$ such that $\tilde V_L = \psi(\mu)$;
\item a positive scalar $c$ such that
\begin{align}\label{eqn:duality boundary D}
[\lambda] \tilde V_L = c \mu.
\end{align}
\end{enumerate}
Consequently, $\mu$ may be expressed in terms of $\lambda$, and vise versa, by
\begin{align}
\mu &= (\lambda\T \varphi(\lambda))\inv [\lambda]\varphi(\lambda) \in \mc M_1\label{eqn:duality boundary D:mu};\\
\lambda &= (\mb 1\T [\psi(\mu)]\inv \mu)\inv [\psi(\mu)]\inv \mu \in \Lambda_1\label{eqn:duality boundary D:lambda}.
\end{align}
\end{lemma}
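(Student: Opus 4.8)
The plan is to realize $\lambda$ and $\mu$ as the normalized left and right Perron vectors of one and the same singular M-matrix, and to connect them through a diagonal similarity between that matrix and its transpose.

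First I would dispatch existence in the first two claims directly from the two parametrizations of $\partial\mc D$: Theorem~\ref{theorem:parametrization of D} supplies some $\lambda\in\Lambda_1$ with $\tilde V_L=\varphi(\lambda)$, and Theorem~\ref{theorem:alternative parametrization of D} supplies some $\mu\in\mc M_1$ with $\tilde V_L=\psi(\mu)$. Writing $A:=-\pdd{P_c}{V_L}(\tilde V_L)=g(\tilde V_L)-[\mc I_L^*]$ via \eqref{eqn:rewrite of jacobian of P_c}, Lemma~\ref{lemma:D m-matrix} makes $A$ a singular M-matrix because $\tilde V_L\in\partial\mc D$; its off-diagonal sign pattern equals that of $Y_{LL}$, so $A$ is irreducible, and Proposition~\ref{proposition:Z-matrix perron result} then guarantees that $\ker A$ and $\ker A\T$ are one-dimensional and each spanned by a positive vector. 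Lemma~\ref{lemma:jacobian of P_c M-matrix} with $r=0$ identifies $\mu>\mb 0$ as a spanning vector of $\ker A$. For $\lambda$, I would rewrite $\tilde V_L=\varphi(\lambda)$ as $2h(\lambda)\tilde V_L=[\lambda]\mc I_L^*$ and, using that $Y_{LL}$ is symmetric together with the identity $[\lambda]x=[x]\lambda$, recast this as $A\T\lambda=\mb 0$, so that $\lambda$ spans $\ker A\T$.

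The crux is the diagonal similarity $A=[\tilde V_L]\,A\T\,[\tilde V_L]\inv$, which holds because $g(\tilde V_L)=[\tilde V_L]Y_{LL}+[Y_{LL}\tilde V_L]$ and conjugation by $[\tilde V_L]$ interchanges $[\tilde V_L]Y_{LL}$ with $Y_{LL}[\tilde V_L]$ while fixing the diagonal terms. Hence $A\mu=\mb 0$ yields $A\T\bigl([\tilde V_L]\inv\mu\bigr)=\mb 0$, so $[\tilde V_L]\inv\mu\in\ker A\T=\mathrm{span}(\lambda)$; equivalently $[\lambda]\tilde V_L=[\tilde V_L]\lambda=c\,\mu$ for a scalar $c$, which is exactly the relation \eqref{eqn:duality boundary D} asserted in the third claim. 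Uniqueness in the first two claims then comes for free: any other admissible $\lambda'$ spans $\ker A\T$, hence $\lambda'=\pm\lambda$, and since $h(-\lambda)=-h(\lambda)$ fails to be positive definite the sign and the normalization $\|\lambda\|_1=1$ pin down $\lambda'=\lambda$; the vector $\mu$ is fixed similarly by spanning $\ker A$, positivity, and $\mb 1\T\mu=1$.

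To get $c>0$, I would apply $\mb 1\T$ to $[\lambda]\tilde V_L=c\,\mu$ to obtain $c=\lambda\T\tilde V_L$ (using $\mb 1\T\mu=1$), and pair $2h(\lambda)\tilde V_L=[\lambda]\mc I_L^*$ with $\tilde V_L$ to get $2\|\tilde V_L\|_{h(\lambda)}^2=\tilde V_L\T[\lambda]\mc I_L^*=c\,(\mc I_L^*)\T\mu$; the left-hand side is positive since $h(\lambda)$ is positive definite, and $(\mc I_L^*)\T\mu>0$ by Lemma~\ref{lemma:positive open-circuit voltages} and $\mu>\mb 0$, forcing $c>0$ (and incidentally $\lambda>\mb 0$). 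Finally \eqref{eqn:duality boundary D:mu} and \eqref{eqn:duality boundary D:lambda} follow by substituting $\tilde V_L=\varphi(\lambda)$, respectively $\tilde V_L=\psi(\mu)$, into $[\lambda]\tilde V_L=c\,\mu$ and eliminating $c$ through the normalizations $\mb 1\T\mu=1$ and $\|\lambda\|_1=\mb 1\T\lambda=1$ (legitimate as $\lambda>\mb 0$). I expect the only real friction to be the index bookkeeping that recasts $\tilde V_L=\varphi(\lambda)$ as $A\T\lambda=\mb 0$ and the verification of the similarity; once $A=[\tilde V_L]A\T[\tilde V_L]\inv$ is established, both the relation \eqref{eqn:duality boundary D} and the sign of $c$ are essentially automatic.
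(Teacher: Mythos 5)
Your proposal is correct and follows essentially the same route as the paper: both identify $\mu$ and $\lambda$ as the (right) Perron vectors of the singular irreducible M-matrix $-\pdd{P_c}{V_L}(\tilde V_L)$ and of its transpose, and both hinge on the diagonal similarity $-\pdd{P_c}{V_L}(\tilde V_L)[\tilde V_L]=-[\tilde V_L]\pdd{P_c}{V_L}(\tilde V_L)\T$ to transport one kernel to the other and obtain \eqref{eqn:duality boundary D}. The only differences are cosmetic: you re-derive in place a few facts the paper cites from Part~I (that $\lambda$ spans $\ker\pdd{P_c}{V_L}(\tilde V_L)\T$, and the positivity of $c$, for which $c=\lambda\T\tilde V_L>0$ already follows from $\lambda>\mb 0$ and $\tilde V_L>\mb 0$).
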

\begin{proof}
The existence and uniqueness of $\lambda$ and $\mu$ follows respectively from Theorem~\ref{theorem:parametrization of D} and Theorem~\ref{theorem:alternative parametrization of D}.
Since $Y_{LL}$ is symmetric we have
\begin{align}
-\pdd {P_c}{V_L}(\tilde V_L) [\tilde V_L] &= [\tilde V_L] Y_{LL} [\tilde V_L] + [\tilde V_L][Y_{LL}(\tilde V_L - V_L^*)] \nonumber\\
&= - [\tilde V_L] \pdd {P_c}{V_L}(\tilde V_L)\T.\label{eqn:duality boundary D:2}
\end{align}
Note that $-\pdd {P_c}{V_L}$ and its transpose are singular M-matrices by Lemma~\ref{lemma:D m-matrix}, and are irreducible by Lemma~\PIref{lemma:jacobian Z-matrix} since $\tilde V_L>\mb 0$.
Proposition~\PIref{proposition:perron root M-matrix} states that the kernels of $-\pdd {P_c}{V_L}$ and its transpose are spanned by any of their respective Perron vectors.
Hence, if $\lambda\in\Lambda_1$ is such that $\tilde V_L = \varphi(\lambda)$, then $\lambda$ in a Perron vector of $-\pdd {P_c}{V_L}(\tilde V_L)\T$ by Lemma~\PIref{lemma:characterization pddf M-matrix}. We deduce from \eqref{eqn:duality boundary D:2} that
\begin{align*}
\mb 0 = -[\tilde V_L]\pdd {P_c}{V_L}(\tilde V_L)\T\lambda = -\pdd {P_c}{V_L}(\tilde V_L) [\tilde V_L]\lambda.
\end{align*}
It follows that $[\tilde V_L]\lambda$ spans in the kernel of $-\pdd {P_c}{V_L}(\tilde V_L)$. Lemma~\ref{lemma:jacobian of P_c M-matrix} implies that \eqref{eqn:duality boundary D} holds for some scalar $c$. Since $\tilde V_L>\mb 0$, $\lambda>\mb 0$ and $\mu>\mb 0$ we have $c>\mb 0$. Moreover, since $\mu\T \mb 1 = 1$, multiplying \eqref{eqn:duality boundary D} by $\mb 1\T$ yields $c = \lambda\T \tilde V_L = \lambda\T \varphi(\Lambda)$. By taking $c$ to the other side of \eqref{eqn:duality boundary D} we obtain \eqref{eqn:duality boundary D:mu}.
Similarly, since $\lambda\T \mb 1 = 1$, multiplying \eqref{eqn:duality boundary D} by $\mb 1\T [\tilde V_L]\inv$ yields $1 = \mb 1\T [\tilde V_L]\inv \mu c = \mb 1\T [\psi(\mu)]\inv \mu c$. By multiplying \eqref{eqn:duality boundary D} by $[\psi(\mu)]\inv$ we obtain \eqref{eqn:duality boundary D:lambda}.
\end{proof}

Lemma~\ref{lemma:duality boundary D}, and in particular \eqref{eqn:duality boundary D}, establishes a duality between the two parametrizations of $\partial \mc D$. Note that \eqref{eqn:duality boundary D:mu} and \eqref{eqn:duality boundary D:lambda} describe their correspondence.

\subsection{Two parametrizations of the boundary of $\mc F$}\label{subsection:boundary of F}
We continue by studying parametrizations of the boundary of $\mc F$.
Theorem~\ref{theorem:one-to-one boundary} states that $\partial \mc D$ is in one-to-one correspondence with $\partial \mc F$. Since $\partial \mc D$ is parametrized both by $\varphi(\lambda)$ for $\lambda\in\Lambda_1$ (Theorem~\ref{theorem:parametrization of D}) and by $\psi(\mu)$ for $\mu\in\mc M_1$ (Theorem~\ref{theorem:alternative parametrization of D}), it follows that $\partial \mc F$ can be parametrized as
\begin{align*}
\partial \mc F = \set{P_c(\varphi(\lambda))}{\lambda\in\Lambda_1}= \set{P_c(\psi(\mu))}{\mu\in\mc M_1}.
\end{align*}
The next theorem gives an alternative formulation for both of these parametrizations.

\begin{theorem}[\mainresult{boundary of F}]\label{theorem:parametrization of boundary of F}
Let $\tilde P_c\in\partial F$, then there exist unique vectors $\tilde V_L\in \partial D$ and $\mu\in\mc M_1$ such that $\tilde P_c = P_c(\tilde V_L)$ and $\tilde V_L = \psi(\mu)$.
These vectors satisfy
\begin{align}
\tilde P_c &= [\tilde V_L]^2 [\mu]\inv Y_{LL} \mu.\label{eqn:parametrization of boundary of F:mu}
\end{align}
This implies that the boundary of $\mc F$ is parametrized by
\begin{align}
\partial \mc F =\set{[\psi(\mu)]^2 [\mu]\inv Y_{LL} \mu}{\mu\in\mc M_1}.\label{eqn:parametrization of boundary of F:mu 2}
\end{align}
\end{theorem}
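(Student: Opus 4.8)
The plan is to assemble the two existence-and-uniqueness statements already available and then carry out a short diagonal-matrix computation to produce the formula \eqref{eqn:parametrization of boundary of F:mu}, from which the parametrization \eqref{eqn:parametrization of boundary of F:mu 2} is immediate.

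First I would pin down $\tilde V_L$ and $\mu$. Since $\tilde P_c\in\partial\mc F$, Theorem~\ref{theorem:one-to-one boundary} provides a unique $\tilde V_L\in\partial\mc D$ with $\tilde P_c = P_c(\tilde V_L)$. Because $\partial\mc D = \psi(\mc M_1)$ by Theorem~\ref{theorem:alternative parametrization of D} (recall $\psi(\mu) = g(\mu)\inv[\mu]\mc I_L^*$ from \eqref{eqn:definition of psi}), there is a unique $\mu\in\mc M_1$ with $\tilde V_L = \psi(\mu)$. This settles the existence and uniqueness of both vectors. Note also that $\mu>\mb 0$ by Lemma~\ref{lemma:positivity of M}, so $[\mu]\inv$ is well defined.

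The key identity comes from rewriting $\tilde V_L = \psi(\mu)$ as $g(\mu)\tilde V_L = [\mu]\mc I_L^*$. Expanding $g(\mu) = [\mu]Y_{LL} + [Y_{LL}\mu]$ from \eqref{eqn:definition of g} and using the elementary fact that $[a]b = [b]a$ for vectors $a,b$ (so that $[Y_{LL}\mu]\tilde V_L = [\tilde V_L]Y_{LL}\mu$) gives $[\mu]Y_{LL}\tilde V_L + [\tilde V_L]Y_{LL}\mu = [\mu]\mc I_L^*$. Multiplying through by $[\mu]\inv$ and invoking $Y_{LL}V_L^* = \mc I_L^*$ from \eqref{eqn:open-circuit voltages}, I obtain $Y_{LL}(V_L^* - \tilde V_L) = [\mu]\inv[\tilde V_L]Y_{LL}\mu$. (This identity is exactly the statement that $\mu$ is a Perron vector of $-\pdd{P_c}{V_L}(\tilde V_L)$ with Perron root zero, \ie\ Lemma~\ref{lemma:jacobian of P_c M-matrix} with $r=0$; one could start from there instead.)

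Finally I would substitute this into $P_c(\tilde V_L) = [\tilde V_L]Y_{LL}(V_L^* - \tilde V_L)$ from \eqref{eqn:definition of f}. Since all factors involved are diagonal matrices they commute, and $[\tilde V_L]\,[\mu]\inv[\tilde V_L] = [\tilde V_L]^2[\mu]\inv$, yielding $\tilde P_c = P_c(\tilde V_L) = [\tilde V_L]^2[\mu]\inv Y_{LL}\mu$, which is \eqref{eqn:parametrization of boundary of F:mu}. Replacing $\tilde V_L$ by $\psi(\mu)$ and letting $\mu$ range over $\mc M_1$ then delivers \eqref{eqn:parametrization of boundary of F:mu 2}, because $\partial\mc F$ is in one-to-one correspondence with $\partial\mc D = \psi(\mc M_1)$. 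The computation is entirely elementary; the only point requiring care — hardly an obstacle — is tracking which diagonal operators commute and recognizing that $g(\mu)\tilde V_L = [\mu]\mc I_L^*$ encodes the zero-Perron-root condition on the boundary $\partial\mc D$.
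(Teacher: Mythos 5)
Your proposal is correct, and the overall scaffolding (uniqueness of $\tilde V_L$ via Theorem~\ref{theorem:one-to-one boundary}, uniqueness of $\mu$ via Theorem~\ref{theorem:alternative parametrization of D}, then a computation landing on $[\tilde V_L]Y_{LL}(V_L^*-\tilde V_L)=[\tilde V_L]^2[\mu]\inv Y_{LL}\mu$) matches the paper. The computational core, however, is carried out differently, and your version is the more elementary of the two. The paper works forward from the closed form $\psi(\mu)=g(\mu)\inv[\mu]\mc I_L^*$, rewrites it as $V_L^*-([\mu]Y_{LL}+[Y_{LL}\mu])\inv[Y_{LL}\mu]V_L^*$, and then needs the commutation identity $A(A+B)\inv B=B(A+B)\inv A$ (its equation \eqref{eqn:parametrization of boundary of F:4}, with $A=[\mu]Y_{LL}$, $B=[Y_{LL}\mu]$) to arrive at the key relation $[\mu]Y_{LL}(V_L^*-\psi(\mu))=[\psi(\mu)]Y_{LL}\mu$. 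You instead clear the inverse by writing $g(\mu)\tilde V_L=[\mu]\mc I_L^*$, expand $g(\mu)$, and use only the trivial fact $[Y_{LL}\mu]\tilde V_L=[\tilde V_L]Y_{LL}\mu$ together with $\mc I_L^*=Y_{LL}V_L^*$ to get the same relation in one line; this sidesteps the resolvent-type identity entirely. Your parenthetical observation that $g(\mu)\tilde V_L=[\mu]\mc I_L^*$ is precisely the $r=0$ case of Lemma~\ref{lemma:jacobian of P_c M-matrix} is also accurate and gives a cleaner conceptual reading (zero Perron root on $\partial\mc D$) than the paper's algebra. Both routes are valid; yours buys a shorter derivation at no cost, while the paper's has the minor advantage of producing the intermediate identity \eqref{eqn:parametrization of boundary of F:3} in a form stated purely in terms of $\psi(\mu)$ without reference to the defining linear system.
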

\begin{proof}
The existence and uniqueness of $\tilde V_L$ and $\mu$ follows respectively from Theorem~\ref{theorem:one-to-one boundary} and Theorem~\ref{theorem:alternative parametrization of D}.
By \eqref{eqn:definition of psi}, \eqref{eqn:definition of g} and \eqref{eqn:open-circuit voltages} we have 
\begin{align}
\psi(\mu) &= g(\mu)\inv[\mu]\mc I_L^*\nolabel\\
&= ([\mu]Y_{LL} + [Y_{LL}\mu])\inv [\mu] Y_{LL} V_L^* \label{theorem:parametrization of boundary of F:1}\\
&= V_L^* - ([\mu]Y_{LL} + [Y_{LL}\mu])\inv [Y_{LL}\mu] V_L^*.\nolabel
\end{align}
We deduce that
\begin{multline}\label{eqn:parametrization of boundary of F:2}
[\mu]Y_{LL}(V_L^* - \psi(\mu)) \\= [\mu]Y_{LL} ([\mu]Y_{LL} + [Y_{LL}\mu])\inv [Y_{LL}\mu] V_L^*.
\end{multline}
Observe that for any two square matrices $A,B$ such that $A+B$ is nonsingular we have the identity\footnote{This identity may be verified by adding $A(A+B)\inv A$ to both sides of the equation and simplifying.}
\begin{align}\label{eqn:parametrization of boundary of F:4}
A(A+B)\inv B = B(A+B)\inv A.
\end{align}
Using \eqref{eqn:parametrization of boundary of F:4} with $A=[\mu]Y_{LL}$ and $B = [Y_{LL}\mu]$ in \eqref{eqn:parametrization of boundary of F:2} %
yields
\begin{multline}\label{eqn:parametrization of boundary of F:3}
[\mu]Y_{LL}(V_L^* - \psi(\mu)) \\= [Y_{LL}\mu]([\mu]Y_{LL} + [Y_{LL}\mu])\inv  [\mu] Y_{LL} V_L^*\\=[Y_{LL}\mu]\psi(\mu) = [\psi(\mu)]Y_{LL}\mu,
\end{multline}
where we substituted \eqref{theorem:parametrization of boundary of F:1}.
By \eqref{eqn:parametrization of boundary of F:3} it follows that
\begin{multline*}
P_c (\psi(\mu)) = [\psi(\mu)] Y_{LL}(V_L^* - \psi(\mu)) \\= [\psi(\mu)][\mu]\inv [\psi(\mu)]Y_{LL}\mu = [\psi(\mu)]^2[\mu]\inv Y_{LL}\mu,
\end{multline*}
which proves \eqref{eqn:parametrization of boundary of F:mu 2}. Since $\tilde P_c = P_c(\psi(\mu))$ and $\tilde V_L = \psi(\mu)$ we have
$\tilde P_c =  [\tilde V_L]^2[\mu]\inv Y_{LL}\tilde V_L$,
which proves \eqref{eqn:parametrization of boundary of F:mu}.
\end{proof}

The duality of Lemma~\ref{lemma:duality boundary D} implies the following corollary.
\begin{corollary}[\mainresult{boundary of F}]\label{corollary:alternative boundary of F}
Let $\tilde P_c\in\partial F$, then there exists unique vectors $\tilde V_L\in \partial D$ and $\lambda\in\Lambda_1$ such that $\tilde P_c = P_c(\tilde V_L)$ and $\tilde V_L = \varphi(\lambda)$.
These vectors satisfy
\begin{align}
\tilde P_c &= [\varphi(\lambda)] [\lambda]\inv Y_{LL} [\lambda]\varphi(\lambda).\label{eqn:parametrization of boundary of F:lambda}
\end{align}
This implies that the boundary of $\mc F$ is parametrized by
\begin{align}
\partial \mc F =\set{[\varphi(\lambda)] [\lambda]\inv Y_{LL} [\lambda]\varphi(\lambda)}{\lambda\in\Lambda_1}.\label{eqn:parametrization of boundary of F:lambda 2}
\end{align}
\end{corollary}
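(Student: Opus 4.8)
The plan is to derive Corollary~\ref{corollary:alternative boundary of F} directly from Theorem~\ref{theorem:parametrization of boundary of F} by translating its $\mu$-based parametrization into a $\lambda$-based one, using the duality established in Lemma~\ref{lemma:duality boundary D}. The existence and uniqueness of $\tilde V_L\in\partial\mc D$ with $\tilde P_c = P_c(\tilde V_L)$ comes from Theorem~\ref{theorem:one-to-one boundary}, and the existence and uniqueness of $\lambda\in\Lambda_1$ with $\tilde V_L = \varphi(\lambda)$ comes from Theorem~\ref{theorem:parametrization of D}. What remains is to convert the formula \eqref{eqn:parametrization of boundary of F:mu} into \eqref{eqn:parametrization of boundary of F:lambda}.

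First I would invoke Theorem~\ref{theorem:parametrization of boundary of F} to write $\tilde P_c = [\tilde V_L]^2[\mu]\inv Y_{LL}\mu$, where $\mu\in\mc M_1$ is the unique vector satisfying $\tilde V_L = \psi(\mu)$. Lemma~\ref{lemma:duality boundary D} then provides a positive scalar $c$ with $[\lambda]\tilde V_L = c\mu$, equivalently $\mu = c\inv[\lambda]\tilde V_L$ and $[\mu]\inv = c[\lambda]\inv[\tilde V_L]\inv$. Substituting both of these into the $\mu$-parametrization yields
\[
\tilde P_c = [\tilde V_L]^2\, c[\lambda]\inv[\tilde V_L]\inv\, Y_{LL}\, c\inv[\lambda]\tilde V_L .
\]
The scalars $c$ and $c\inv$ cancel, and since $[\tilde V_L]^2$, $[\tilde V_L]\inv$, and $[\lambda]\inv$ are diagonal they commute freely and collapse to $[\tilde V_L][\lambda]\inv$ on the left of $Y_{LL}$. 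Using $\tilde V_L = \varphi(\lambda)$ both as the rightmost vector and inside the outer diagonal factor produces precisely $\tilde P_c = [\varphi(\lambda)][\lambda]\inv Y_{LL}[\lambda]\varphi(\lambda)$, which is \eqref{eqn:parametrization of boundary of F:lambda}. The global parametrization \eqref{eqn:parametrization of boundary of F:lambda 2} then follows immediately, since $\partial\mc D = \set{\varphi(\lambda)}{\lambda\in\Lambda_1}$ by Theorem~\ref{theorem:parametrization of D} and $\partial\mc D$ is in one-to-one correspondence with $\partial\mc F$ under $P_c$ by Theorem~\ref{theorem:one-to-one boundary}.

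The computation is routine once the duality relation $[\lambda]\tilde V_L = c\mu$ is in hand, so I do not expect a genuine obstacle. The only points requiring care are to keep the non-diagonal factor $Y_{LL}$ in its correct position while commuting the diagonal matrices past one another, and to confirm that the unknown constant $c$ truly cancels rather than surviving in the final expression. Because $c$ enters once through $[\mu]\inv$ and once through $\mu$, this cancellation is automatic, leaving only bookkeeping of the diagonal factors.
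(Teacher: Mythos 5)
Your proposal is correct and is exactly the route the paper intends: the paper gives no explicit proof, stating only that the corollary follows from the duality of Lemma~\ref{lemma:duality boundary D} applied to Theorem~\ref{theorem:parametrization of boundary of F}, and your substitution $\mu = c\inv[\lambda]\tilde V_L$, $[\mu]\inv = c[\lambda]\inv[\tilde V_L]\inv$ into \eqref{eqn:parametrization of boundary of F:mu}, with the cancellation of $c$ and the commuting of diagonal factors, is precisely the computation being left to the reader. The uniqueness and parametrization claims are sourced from the same results (Theorems~\ref{theorem:one-to-one boundary} and \ref{theorem:parametrization of D}) the paper relies on.
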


\subsection{The boundary of $\mc F$ in the nonnegative orthant}

Theorem~\ref{theorem:parametrization of boundary of F} gives an explicit relation between the boundary of $\mc F$ and the vectors $\mu\in\mc M_1$.
The next lemma characterizes all $\mu\in\mc M_1$ for which the corresponding power demand in $\partial \mc F$ lies in the nonnegative orthant.
\begin{lemma}\label{lemma:nonnegative boundary of F}
Given $\tilde P_c\in \partial \mc F$, let $\tilde V_L \in\partial \mc D$ and $\mu\in\mc M_1$ be the unique vectors so that $\tilde P_c = P_c(\tilde V_L)$ and $\tilde V_L = \psi(\mu)$, then $\tilde P_c\in\mc N$ if and only if $Y_{LL}\mu\in\mc N$. Consequently, the boundary of $\mc F$ in the nonnegative orthant is parametrized by
\begin{align*}
\partial \mc F \cap \mc N = \set{P_c(\psi(\mu))}{\mu\in\mc M_1, Y_{LL}\mu\in \mc N}.
\end{align*}
\end{lemma}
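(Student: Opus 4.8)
The plan is to read the equivalence directly off the parametrization already established in Theorem~\ref{theorem:parametrization of boundary of F}, which writes $\tilde P_c$ as a componentwise positive rescaling of the vector $Y_{LL}\mu$. Concretely, equation \eqref{eqn:parametrization of boundary of F:mu} gives
\begin{align*}
\tilde P_c = [\tilde V_L]^2 [\mu]\inv Y_{LL}\mu,
\end{align*}
where $[\tilde V_L]^2[\mu]\inv$ is a diagonal matrix whose $i$-th entry equals $(\tilde V_L)_i^2/\mu_i$. The existence and uniqueness of $\tilde V_L\in\partial\mc D$ and $\mu\in\mc M_1$ with $\tilde P_c=P_c(\tilde V_L)$ and $\tilde V_L=\psi(\mu)$ are supplied by Theorem~\ref{theorem:one-to-one boundary} and Theorem~\ref{theorem:alternative parametrization of D}, so I would simply invoke them at the outset.

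The only substantive point is to verify that this diagonal scaling is \emph{strictly} positive, so that it preserves the sign of each component. For this I would argue that $\tilde V_L>\mb 0$, which holds because $\tilde V_L\in\partial\mc D$ and $\partial\mc D\subset\{V_L>\mb 0\}$ by Lemma~\ref{lemma:D m-matrix}; hence each $(\tilde V_L)_i^2>0$. Likewise $\mu>\mb 0$, since $\mu\in\mc M_1\subset\mc M$ and $\mc M$ lies in the positive orthant by Lemma~\ref{lemma:positivity of M}; hence each $1/\mu_i>0$. Therefore every diagonal entry $(\tilde V_L)_i^2/\mu_i$ is strictly positive, and comparing the two sides of \eqref{eqn:parametrization of boundary of F:mu} component by component yields that $(\tilde P_c)_i$ has the same sign as $(Y_{LL}\mu)_i$ for each $i$. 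In particular $\tilde P_c\ge\mb 0$ if and only if $Y_{LL}\mu\ge\mb 0$, which is exactly the claim $\tilde P_c\in\mc N\iff Y_{LL}\mu\in\mc N$.

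Finally, for the displayed parametrization of $\partial\mc F\cap\mc N$, I would intersect the parametrization $\partial\mc F=\set{P_c(\psi(\mu))}{\mu\in\mc M_1}$ from Theorem~\ref{theorem:parametrization of boundary of F} with the just-proved membership criterion $Y_{LL}\mu\in\mc N$, which immediately gives $\partial\mc F\cap\mc N=\set{P_c(\psi(\mu))}{\mu\in\mc M_1,\ Y_{LL}\mu\in\mc N}$. I do not anticipate a genuine obstacle here: the entire content is packaged in \eqref{eqn:parametrization of boundary of F:mu}, and the argument reduces to confirming the positivity of the diagonal factor, which is immediate from $\tilde V_L\in\partial\mc D$ and $\mu\in\mc M_1$.
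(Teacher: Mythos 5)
Your proposal is correct and follows essentially the same route as the paper's proof: both invoke the existence and uniqueness from Theorems~\ref{theorem:one-to-one boundary} and \ref{theorem:alternative parametrization of D}/\ref{theorem:parametrization of boundary of F}, observe that $\tilde V_L>\mb 0$ and $\mu>\mb 0$ so the diagonal factor in \eqref{eqn:parametrization of boundary of F:mu} is positive, and read off the sign equivalence componentwise. No issues.
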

\begin{proof}
The existence and uniqueness of $\tilde V_L$ and $\mu$ follow respectively from Theorem~\ref{theorem:one-to-one boundary} and Theorem~\ref{theorem:parametrization of boundary of F}.
Note that $\tilde V_L>\mb 0$, and $\mu >\mb 0$ by Lemma~\ref{lemma:positivity of M}. Hence, it follows from \eqref{eqn:parametrization of boundary of F:mu} that $\tilde P_c\ge \mb 0$ if and only if $Y_{LL}\mu\ge \mb 0$.
The parametrization follows directly from Theorem~\ref{theorem:parametrization of boundary of F}.
\end{proof}

Lemma~\ref{lemma:nonnegative boundary of F} shows that any power demand $\tilde P_c$ in $\partial \mc F \cap \mc N$ is uniquely associated to the vector $Y_{LL}\mu$ in $\mc N$.
Conversely, we now show that any nonzero vector $\nu$ in $\mc N$ is, up to scaling of $\nu$, is uniquely associated to a power demand in $\partial \mc F \cap \mc N$. We require the following lemma.

\begin{lemma}\label{lemma:nonnegative cone in M}
For each nonzero vector $\nu\in \mc N$ we have ${Y_{LL}}\inv\nu\in\mc M$.
\end{lemma}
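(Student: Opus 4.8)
The plan is to set $\mu := Y_{LL}\inv\nu$ and verify the two defining properties of membership in $\mc M$, namely that $g(\mu)$ is a Z-matrix and that it is a nonsingular M-matrix. First I would record the relevant properties of $Y_{LL}$: it is a symmetric, irreducible, nonsingular M-matrix, so by \cite[Thm. 5.12]{fiedler1986special} its inverse $Y_{LL}\inv$ is entrywise positive; since $\nu\gneqq\mb 0$ this yields $\mu = Y_{LL}\inv\nu>\mb 0$. Being a symmetric nonsingular M-matrix, $Y_{LL}$ is moreover positive definite. Using $Y_{LL}\mu=\nu$ in the definition \eqref{eqn:definition of g}, I would rewrite
\begin{align*}
g(\mu) = [\mu]Y_{LL} + [Y_{LL}\mu] = [\mu]Y_{LL} + [\nu],
\end{align*}
so that $g(\mu)$ differs from $[\mu]Y_{LL}$ only by the nonnegative diagonal matrix $[\nu]$.

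Next I would show that $[\mu]Y_{LL}$ is itself a nonsingular M-matrix. Since $\mu>\mb 0$ and $Y_{LL}$ is a Z-matrix, each off-diagonal entry of $[\mu]Y_{LL}$ equals $\mu_i (Y_{LL})_{ij}\le 0$, so $[\mu]Y_{LL}$ is a Z-matrix. Conjugating by $[\mu]^{1/2}$ gives
\begin{align*}
[\mu]^{-1/2}\,[\mu]Y_{LL}\,[\mu]^{1/2} = [\mu]^{1/2}Y_{LL}[\mu]^{1/2},
\end{align*}
which is symmetric and positive definite because $Y_{LL}$ is. Hence $[\mu]Y_{LL}$ is similar to a positive definite matrix, so all of its eigenvalues are positive; a Z-matrix all of whose eigenvalues have positive real part is a nonsingular M-matrix, so $[\mu]Y_{LL}$ is one.

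Finally I would add back the diagonal term. The matrix $g(\mu)=[\mu]Y_{LL}+[\nu]$ is again a Z-matrix (the diagonal correction $[\nu]\ge\mb 0$ does not affect off-diagonal entries), and it dominates $[\mu]Y_{LL}$ entrywise. A Z-matrix that dominates a nonsingular M-matrix entrywise is itself a nonsingular M-matrix; equivalently, choosing $x>\mb 0$ with $[\mu]Y_{LL}\,x>\mb 0$ gives $g(\mu)\,x\ge[\mu]Y_{LL}\,x>\mb 0$, which is the standard positive-test-vector certificate. Therefore $g(\mu)$ is a nonsingular M-matrix, i.e.\ $\mu\in\mc M$.

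I expect the only delicate points to be the two invoked M-matrix facts—that a symmetric nonsingular M-matrix is positive definite, and that a Z-matrix entrywise dominating a nonsingular M-matrix is again a nonsingular M-matrix. Neither is hard, and both are catalogued among the M-matrix properties in \cite{fiedler1986special} and in the matrix-theory appendix of Part~I, so the argument reduces to the short similarity computation above together with the rewriting $g(\mu)=[\mu]Y_{LL}+[\nu]$.
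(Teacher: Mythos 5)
Your proof is correct and follows essentially the same route as the paper: rewrite $g({Y_{LL}}\inv\nu)=[{Y_{LL}}\inv\nu]Y_{LL}+[\nu]$, use positivity of ${Y_{LL}}\inv$ to get ${Y_{LL}}\inv\nu>\mb 0$, conclude that $[{Y_{LL}}\inv\nu]Y_{LL}$ is a nonsingular M-matrix, and observe that adding the nonnegative diagonal $[\nu]$ preserves this. The only difference is that the paper cites two propositions from the matrix-theory appendix of Part~I for the last two steps, whereas you prove them inline (via the congruence $[\mu]^{1/2}Y_{LL}[\mu]^{1/2}$ and the positive-test-vector certificate), both of which are valid.
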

\begin{proof}
It suffices to show that $g({Y_{LL}}\inv \nu)$ is a nonsingular M-matrix. Note that 
\begin{align*}
g({Y_{LL}}\inv \nu) = [{Y_{LL}}\inv \nu]Y_{LL} + [\nu].
\end{align*}
The matrix $Y_{LL}$ is a nonsingular irreducible M-matrix, and its inverse is a positive matrix by \cite[Thm. 5.12]{fiedler1986special}.
Since ${\nu\gneqq \mb 0}$ it follows that ${Y_{LL}}\inv \nu>\mb 0$. Hence $[{Y_{LL}}\inv \nu]Y_{LL}$ is a nonsingular M-matrix by Proposition~\ref{P1-proposition:diagonal properties}:\PIref{proposition:diagonal properties:M-matrix multiplication}. Since $\nu\gneqq \mb 0$, Proposition~\ref{P1-proposition:diagonal properties}:\PIref{proposition:diagonal properties:irreducible M-matrix addition} implies that $[{Y_{LL}}\inv \nu]Y_{LL} + [\nu]$ is a nonsingular M-matrix.
\end{proof}

We normalize the nonzero vectors in $\mc N$ by
\begin{align}\label{eqn:definition of N_1}
\begin{aligned}
\mc N_1 &:= \mc N \cap \set{\nu}{\|\nu\|_1=1} \\
&\;= \set{\nu\in\RR^n}{\nu\ge\mb 0,~ \mb 1\T \nu=1}.
\end{aligned}
\end{align}
We remark that $\mc N_1$ is known as the standard $n-1$-simplex.

Lemma~\ref{lemma:nonnegative boundary of F} and Lemma~\ref{lemma:nonnegative cone in M} suggest that each $\nu \in \mc N_1$ is uniquely associated to a vector $\mu\in\mc M_1$ for which the associated power demand $P_c(\psi(\mu))$ is nonnegative. 
Since there is a one-to-one correspondence between $\mc M_1$ and $\Lambda_1$ by Lemma~\ref{lemma:duality boundary D}, this would mean that there is a one-to-one correspondence between $\mc N_1$, and the vectors $\lambda\in\Lambda_1$ for which the associated power demand $P_c(\varphi(\lambda))$ is nonnegative.
To this end we define for nonzero $\nu\in\mc N$ the map
\begin{align}\label{eqn:definition of chi}
\chi(\nu) &:= \l[\psi({Y_{LL}}\inv \nu)\r]\inv {Y_{LL}}\inv \nu\\
&\;= \l[[{Y_{LL}}\inv \nu]\inv g\l({Y_{LL}}\inv \nu\r)\inv [{Y_{LL}}\inv \nu]\mc I_L^*\r]\inv \mb 1.\nonumber
\end{align}
Since $Y_{LL}$ is symmetric we have for all $\mu>\mb 0$ that
\begin{align}\label{eqn:property of g}
[\mu]\inv g(\mu)[\mu] = (Y_{LL} + [\mu]\inv [Y_{LL}\mu])[\mu] = g(\mu)\T
\end{align}
by using \eqref{eqn:definition of g}, and hence $\chi(\nu)$ can also be written as
\begin{align}\label{eqn:definition of chi 2}
\chi(\nu) = \l[g\l({Y_{LL}}\inv \nu\r)\Tinv \mc I_L^*\r]\inv \mb 1.
\end{align}

The following theorem establishes a one-to-one correspondece between the set $\mc N_1$ and the sets $\partial \mc F$, $\partial \mc D$, $\mc M_1$ and $\Lambda_1$ for which their associated power demands are nonnegative. In addition, we present a parametrization of the boundary of $\mc F$ restricted to the nonnegative orthant, in terms of $\mc N_1$.

\begin{theorem}[\mainresult{nonnegative power demands}\textbf{a}]\label{theorem:parametrization nonnegative boundary of F}
There is a one-to-one correspondence between the following sets:
\begin{enumerate}[i)]
\item The nonnegative feasible power demands $\tilde P_c$ on the boundary of $\mc F$ (\ie, $\tilde P_c\in \partial \mc F \cap \mc N$);\label{theorem:parametrization nonnegative boundary of F:1}
\item The operating points $\tilde V_L$ on the boundary of $\mc D$ such that $Y_{LL}\tilde V_L \le \mc I_L^*$;\label{theorem:parametrization nonnegative boundary of F:2}
\item The vectors $\mu\in\mc M_1$ such that $Y_{LL}\mu \in\mc N$;\label{theorem:parametrization nonnegative boundary of F:3}
\item The vectors $\lambda \in \Lambda_1$ such that $Y_{LL}[\lambda]\varphi(\lambda)\in\mc N$;\label{theorem:parametrization nonnegative boundary of F:4}
\item The vectors $\nu\in \mc N_1$.\label{theorem:parametrization nonnegative boundary of F:5}
\end{enumerate}
These correspondences satisfy the equations
\begin{align*}
\begin{aligned}
\tilde P_c &= P_c(\tilde V_L);&\\
\tilde V_L &= \psi(\mu) = \psi({Y_{LL}}\inv \nu)\\&= \varphi(\lambda)= \varphi(\chi(\nu));
\end{aligned}\begin{aligned}
\mu &\propto [\lambda]\varphi(\lambda) \propto {Y_{LL}}\inv \nu;\\
\lambda &\propto [\psi(\mu)]\inv \mu \propto \chi(\nu);\\
\nu &\propto Y_{LL}[\lambda]\varphi(\lambda) \propto {Y_{LL}}\mu;
\end{aligned}
\end{align*}
where by $\propto$ we mean that equality holds up to a positive scaling factor. 
In particular, $\chi$ is a one-to-one correspondence between $\mc N_1$ and $\Lambda_1$, up to scaling.
Moreover, the boundary of $\mc F$ in the nonnegative orthant is parametrized by
\begin{align*}
\partial \mc F\cap \mc N = \set{P_c(\psi({Y_{LL}}\inv\nu))}{\nu\in\mc N_1},%
\end{align*}
and the corresponding operating points are parametrized by
\begin{align*}
\set{\tilde V_L\in\partial \mc D}{P_c(\tilde V_L)\ge \mb 0} = \set{\psi({Y_{LL}}\inv\nu))}{\nu\in\mc N_1}.%
\end{align*}
\end{theorem}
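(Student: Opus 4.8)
The plan is to build the five-way correspondence in two stages. First I would show that sets i)--iv) are merely different encodings of one and the same subset of $\partial\mc D$, obtained by restricting the triple correspondence $\partial\mc F\leftrightarrow\partial\mc D\leftrightarrow\Lambda_1$ (and $\leftrightarrow\mc M_1$) already supplied by Theorems~\ref{theorem:one-to-one boundary}, \ref{theorem:parametrization of D} and \ref{theorem:alternative parametrization of D}. Second, I would prove the genuinely new correspondence with v) through the linear map $Y_{LL}$ and the map $\chi$. Throughout I will use that $\varphi$ and $\psi$ are homogeneous of degree zero in their argument, which is immediate from the linearity of $h$ and $g$ together with \eqref{eqn:definition of phi} and \eqref{eqn:definition of psi}; consequently only the rays spanned by $\lambda$, $\mu$ and $\nu$ matter and the $\|\cdot\|_1=1$ normalizations are harmless.

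For the equivalence of i)--iv) I would argue as follows. The bijection $\tilde V_L\mapsto P_c(\tilde V_L)$ of Theorem~\ref{theorem:one-to-one boundary} identifies $\partial\mc D$ with $\partial\mc F$, and by Lemma~\ref{lemma:nonnegative power demands} the condition $\tilde P_c\in\mc N$ is exactly $Y_{LL}\tilde V_L\le\mc I_L^*$; this gives i)$\leftrightarrow$ii). Composing with $\psi:\mc M_1\to\partial\mc D$ from Theorem~\ref{theorem:alternative parametrization of D}, Lemma~\ref{lemma:nonnegative boundary of F} translates $\tilde P_c\ge\mb 0$ into $Y_{LL}\mu\ge\mb 0$, giving ii)$\leftrightarrow$iii). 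Composing instead with $\varphi:\Lambda_1\to\partial\mc D$ from Theorem~\ref{theorem:parametrization of D} and using the identity $\tilde P_c=[\varphi(\lambda)][\lambda]\inv Y_{LL}[\lambda]\varphi(\lambda)$ of Corollary~\ref{corollary:alternative boundary of F}, the positive diagonal prefactor $[\varphi(\lambda)][\lambda]\inv$ (recall $\varphi(\lambda)>\mb 0$ and that the Perron vector satisfies $\lambda>\mb 0$) shows $\tilde P_c\ge\mb 0\iff Y_{LL}[\lambda]\varphi(\lambda)\ge\mb 0$, giving ii)$\leftrightarrow$iv). The direct link iii)$\leftrightarrow$iv) is then the duality of Lemma~\ref{lemma:duality boundary D}, whose formulas \eqref{eqn:duality boundary D:mu} and \eqref{eqn:duality boundary D:lambda} already read $\mu\propto[\lambda]\varphi(\lambda)$ and $\lambda\propto[\psi(\mu)]\inv\mu$.

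The heart of the proof is the correspondence iii)$\leftrightarrow$v), for which I would exhibit the mutually inverse (up to scaling) maps $\nu\mapsto {Y_{LL}}\inv\nu$ and $\mu\mapsto Y_{LL}\mu$. For any nonzero $\nu\in\mc N$, Lemma~\ref{lemma:nonnegative cone in M} gives ${Y_{LL}}\inv\nu\in\mc M$, and after normalization this lands in $\mc M_1$ while $Y_{LL}({Y_{LL}}\inv\nu)=\nu\ge\mb 0$, so the side condition $Y_{LL}\mu\in\mc N$ of iii) holds. Conversely, if $\mu\in\mc M_1$ with $Y_{LL}\mu\in\mc N$, then $\mu>\mb 0$ by Lemma~\ref{lemma:positivity of M} and, since $Y_{LL}$ is nonsingular, $Y_{LL}\mu\gneqq\mb 0$, so its $\|\cdot\|_1$-normalization lies in $\mc N_1$. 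As $Y_{LL}$ and ${Y_{LL}}\inv$ are inverse linear maps preserving rays, these assignments are inverse to one another, establishing iii)$\leftrightarrow$v) with $\mu\propto{Y_{LL}}\inv\nu$ and $\nu\propto Y_{LL}\mu$. Feeding $\mu\propto{Y_{LL}}\inv\nu$ through the scale-invariant $\psi$ yields $\tilde V_L=\psi(\mu)=\psi({Y_{LL}}\inv\nu)$; inserting the same relation into $\lambda\propto[\psi(\mu)]\inv\mu$ and comparing with \eqref{eqn:definition of chi} gives $\chi(\nu)=[\psi({Y_{LL}}\inv\nu)]\inv{Y_{LL}}\inv\nu\propto[\psi(\mu)]\inv\mu\propto\lambda$, so that by scale-invariance of $\varphi$ we get $\varphi(\chi(\nu))=\varphi(\lambda)=\tilde V_L$. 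This verifies the remaining displayed equalities, and the two final parametrizations of $\partial\mc F\cap\mc N$ and of the associated operating points follow at once from $\tilde V_L=\psi({Y_{LL}}\inv\nu)$ and $\tilde P_c=P_c(\tilde V_L)$ as $\nu$ ranges over $\mc N_1$.

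The main obstacle I anticipate is not any single deep step but the careful bookkeeping of the positive scaling factors, so that the normalizations onto $\mc M_1$, $\Lambda_1$ and $\mc N_1$ remain mutually consistent across all five sets and the $\propto$ relations compose correctly. The one substantive ingredient is Lemma~\ref{lemma:nonnegative cone in M}, which guarantees that ${Y_{LL}}\inv$ sends the nonnegative cone into $\mc M$ and thereby makes the bridge from the intrinsic data of $\partial\mc F\cap\mc N$ to the explicit simplex $\mc N_1$ possible.
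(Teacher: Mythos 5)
Your proposal is correct and follows essentially the same route as the paper: the same decomposition into pairwise correspondences, driven by Theorem~\ref{theorem:one-to-one boundary}, Lemmas~\ref{lemma:nonnegative power demands}, \ref{lemma:nonnegative boundary of F}, \ref{lemma:duality boundary D} and \ref{lemma:nonnegative cone in M}, with the link to $\mc N_1$ supplied by the ray-preserving maps $Y_{LL}$ and ${Y_{LL}}\inv$. The only cosmetic differences are that you obtain iv) via the identity of Corollary~\ref{corollary:alternative boundary of F} rather than by substituting the duality relation into iii), and that you read off $\lambda\propto\chi(\nu)$ directly from \eqref{eqn:definition of chi} instead of passing through \eqref{eqn:definition of chi 2}; both are equivalent to the paper's steps.
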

\begin{proof}
(\ref{theorem:parametrization nonnegative boundary of F:1} $\leftrightarrow$ \ref{theorem:parametrization nonnegative boundary of F:2}): The map $P_c$ from $\partial \mc D$ to $\partial \mc F$ is a one-to-one by Theorem~\ref{theorem:one-to-one boundary}.
Lemma~\ref{lemma:nonnegative power demands} therefore implies that the map $P_c$ from \ref{theorem:parametrization nonnegative boundary of F:1}) and \ref{theorem:parametrization nonnegative boundary of F:2}) is one-to-one.

(\ref{theorem:parametrization nonnegative boundary of F:1} $\leftrightarrow$ \ref{theorem:parametrization nonnegative boundary of F:3}): The map $\psi$ from $\mc M_1$ to $\partial \mc D$ is one-to-one by Theorem~\ref{theorem:alternative parametrization of D}, and hence $P_c\circ \psi$ from $\mc M_1$ to $\partial \mc F$ is one-to-one.
Lemma~\ref{lemma:nonnegative boundary of F} therefore implies that the map $P_c\circ \psi$ from \ref{theorem:parametrization nonnegative boundary of F:1}) to \ref{theorem:parametrization nonnegative boundary of F:3}) is one-to-one.

(\ref{theorem:parametrization nonnegative boundary of F:3} $\leftrightarrow$ \ref{theorem:parametrization nonnegative boundary of F:4}): 
Lemma~\ref{lemma:duality boundary D} establishes that $\mc M_1$ and $\Lambda_1$ are in one-to-one correspondence, and that $\tilde V_L = \psi(\mu) = \varphi(\lambda)$. Note that \eqref{eqn:duality boundary D:mu} and \eqref{eqn:duality boundary D:lambda} imply that $\mu \propto [\lambda]\varphi(\lambda)$ and $\lambda \propto [\psi(\mu)]\inv \mu$.
Substituting \eqref{eqn:duality boundary D} in \ref{theorem:parametrization nonnegative boundary of F:3}) results in \ref{theorem:parametrization nonnegative boundary of F:4}) and are therefore equivalent.

(\ref{theorem:parametrization nonnegative boundary of F:5} $\leftrightarrow$ \ref{theorem:parametrization nonnegative boundary of F:3}):
Lemma~\ref{lemma:nonnegative cone in M} shows that the map $v\mapsto(\mb 1\T {Y_{LL}}\inv \nu)\inv {Y_{LL}}\inv \nu$ is a map $\mc N_1$ to $\mc M_1$. This map is injective since ${Y_{LL}}\inv$ is nonsingular, and is therefore one-to-one on its image, which is exactly the set \ref{theorem:parametrization nonnegative boundary of F:3}). This shows that $\mu\propto {Y_{LL}}\inv\nu$ and $\nu\propto Y_{LL}\mu$.

Since $\mu \propto [\lambda]\varphi(\lambda)$ and $\nu\propto Y_{LL}\mu$, it follows that $\nu\propto Y_{LL}[\lambda]\varphi(\lambda)$. 
Due to \eqref{eqn:definition of psi} and \eqref{eqn:property of g} we have
\begin{align*}
[\mu]\inv \psi(\mu) = [\mu]\inv g(\mu)\inv [\mu]\mc I_L^* = g(\mu)\invT \mc I_L^*.
\end{align*}
Since $\lambda \propto [\psi(\mu)]\inv \mu$, it follows that $\lambda\propto [g(\mu)\invT \mc I_L^*]\inv \mb 1$.
Because $\mu\propto {Y_{LL}}\inv \nu$, we deduce that $\lambda \propto \chi(\nu)$ by \eqref{eqn:definition of chi 2}.
Thus, the map $\chi$ from $\mc N_1$ to $\Lambda$ is one-to-one, up to scaling.

Finally, the parametrizations follow directly from (\ref{theorem:parametrization nonnegative boundary of F:1} $\leftrightarrow$ \ref{theorem:parametrization nonnegative boundary of F:3}) and (\ref{theorem:parametrization nonnegative boundary of F:5} $\leftrightarrow$ \ref{theorem:parametrization nonnegative boundary of F:3}).
\end{proof}
\begin{remark}\label{remark:computation of boundary}
From a computation standpoint, the parametrization of $\partial\mc F\cap \mc N$ in Theorem~\ref{theorem:parametrization nonnegative boundary of F} is cheaper to compute than the parametrizations of $\partial\mc F$ in Theorem~\ref{theorem:parametrization of boundary of F} or Corollary~\ref{corollary:alternative boundary of F}.
Indeed, to compute the set $\partial \mc F$ we require to identify either $\mc M_1$ or $\Lambda_1$ by Theorem~\ref{theorem:parametrization of boundary of F} or Corollary~\ref{corollary:alternative boundary of F}, respectively, which both are sets that are (in essence) described by the eigenvalues of $n\times n$ matrices.
In contrast, the parametrization of $\partial\mc F\cap \mc N$ in Theorem~\ref{theorem:parametrization nonnegative boundary of F} is in terms of the set $\mc N_1$, which is merely %
an $n-1$-simplex and requires no additional computation.
\end{remark}

\subsection{Refined results for nonnegative power demands}
We conclude this section by presenting a refinement of Theorem~\ref{theorem:convexity of F} and Theorem~\ref{theorem:necessary and sufficient condition} for nonnegative power demands.
This is obtained by applying Theorem~\ref{theorem:parametrization nonnegative boundary of F} to Lemma~\ref{lemma:geometric characterization nonnegative F}.

Theorem~\ref{theorem:parametrization nonnegative boundary of F} states that the map $\chi$ is a one-to-one correspondence between the set $\nu\in\mc N_1$ and vectors $\lambda\in\Lambda_1$ for which the associated power demand $P_c(\varphi(\lambda))$ is nonnegative. More specifically, we have
\begin{multline}\label{eqn:parametrization of Lambda nonnegative powers}
\set{\lambda\in\Lambda_1}{P_c(\varphi(\lambda)) \ge \mb 0} \\= \set{(\mb 1\T \chi(\nu))\inv \chi(\nu)}{\nu\in\mc N_1}\subset \Lambda_1. 
\end{multline}
By substituting this result in Lemma~\ref{lemma:geometric characterization nonnegative F} we obtain a geometric characterization of $\mc F$ in terms of $\mc N_1$.
\begin{corollary}\label{corollary:geometric characterization nonnegative F in N}
The set $\mc F \cap \mc N$ is the intersection over all $\nu\in\mc N_1$ of the half-spaces $H_{\chi(\nu)}$, \ie,
\begin{align*}
\mc F\cap\mc N = \mc N \cap \bigcap_{\nu\in\mc N_1} H_{\chi(\nu)}.
\end{align*}
\end{corollary}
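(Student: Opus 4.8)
The plan is to derive this characterization from Lemma~\ref{lemma:geometric characterization nonnegative F} by nothing more than a re-indexing of the intersection, so that the substantive work has already been done upstream. That lemma expresses $\mc F\cap\mc N$ as $\mc N$ intersected with the half-spaces $H_\lambda$ ranging over $\set{\lambda\in\Lambda_1}{P_c(\varphi(\lambda))\ge\mb 0}$, and \eqref{eqn:parametrization of Lambda nonnegative powers} identifies this index set with $\set{(\mb 1\T\chi(\nu))\inv\chi(\nu)}{\nu\in\mc N_1}$. Substituting the latter description gives
\begin{align*}
\mc F\cap\mc N = \mc N \cap \bigcap_{\nu\in\mc N_1} H_{(\mb 1\T\chi(\nu))\inv\chi(\nu)},
\end{align*}
so it remains only to replace the normalized vector $(\mb 1\T\chi(\nu))\inv\chi(\nu)$ by $\chi(\nu)$ inside the half-space.

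The key step I would establish first is that $H_\lambda$ is invariant under positive scaling of $\lambda$. For $c>0$, definition \eqref{eqn:definition of h} gives $h(c\lambda)=c\,h(\lambda)$, whence \eqref{eqn:definition of phi} yields $\varphi(c\lambda)=\tfrac12(c\,h(\lambda))\inv[c\lambda]\mc I_L^* = \varphi(\lambda)$, and then \eqref{eqn:definition of norm} gives $\|\varphi(c\lambda)\|_{h(c\lambda)}^2 = c\,\|\varphi(\lambda)\|_{h(\lambda)}^2$. Hence the inequality $(c\lambda)\T y \le \|\varphi(c\lambda)\|_{h(c\lambda)}^2$ defining $H_{c\lambda}$ reads $c\,\lambda\T y \le c\,\|\varphi(\lambda)\|_{h(\lambda)}^2$, which upon dividing by $c$ is precisely the inequality defining $H_\lambda$; thus $H_{c\lambda}=H_\lambda$. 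Since $h(c\lambda)=c\,h(\lambda)$ is positive definite exactly when $h(\lambda)$ is, the set $\Lambda$ is a cone and $H_{\chi(\nu)}$ is well-defined, as $\chi(\nu)$ is a positive multiple of a vector in $\Lambda_1$.

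Finally I would apply this invariance with the scalar $c=(\mb 1\T\chi(\nu))\inv$, which is positive because $\chi(\nu)$ is a positive multiple of a positive $\lambda\in\Lambda_1$ (Theorem~\ref{theorem:parametrization nonnegative boundary of F}). This gives $H_{(\mb 1\T\chi(\nu))\inv\chi(\nu)}=H_{\chi(\nu)}$ for every $\nu\in\mc N_1$, and substituting into the displayed intersection yields the claim. The only nonroutine ingredient is the scale-invariance of $H_\lambda$, and even this amounts to tracking the homogeneity degrees of $h$, $\varphi$ and $\|\cdot\|_{h(\lambda)}^2$; I anticipate no genuine obstacle, since Lemma~\ref{lemma:geometric characterization nonnegative F} together with \eqref{eqn:parametrization of Lambda nonnegative powers} already carry the geometric content.
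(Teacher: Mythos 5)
Your proposal is correct and follows the paper's own proof exactly: the paper likewise obtains the corollary by substituting \eqref{eqn:parametrization of Lambda nonnegative powers} into Lemma~\ref{lemma:geometric characterization nonnegative F} and invoking the invariance of $H_\lambda$ under positive scaling of $\lambda$. You merely spell out the homogeneity computation ($h(c\lambda)=c\,h(\lambda)$, $\varphi(c\lambda)=\varphi(\lambda)$, $\|\varphi(c\lambda)\|_{h(c\lambda)}^2=c\|\varphi(\lambda)\|_{h(\lambda)}^2$) that the paper leaves implicit, which is a harmless elaboration.
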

\begin{proof}
The statement follows from substituting \eqref{eqn:parametrization of Lambda nonnegative powers} in Lemma~\ref{lemma:geometric characterization nonnegative F}, and by noting the half-spaces $H_\lambda$ are invariant under scaling of $\lambda$.
\end{proof}

We may now present a necessary and sufficient condition for a vector of nonnegative power demands to be feasible. This condition can be regarded as a refinement of Theorem~\ref{theorem:necessary and sufficient condition} for nonnegative power demands, and is obtained from Corollary~\ref{corollary:geometric characterization nonnegative F in N} by rewriting the half-spaces $H_{\chi(\nu)}$.
\begin{theorem}[\mainresult{nonnegative power demands}\textbf{b}]\label{theorem:nonnegative necessary and sufficient condition}
Let $\tilde P_c$ be a nonnegative power demand (\ie, $\tilde P_c\in \mc N$), then $\tilde P_c$ is feasible (\ie, $\tilde P_c \in \mc F\cap \mc N$) if and only if
\begin{align}
\chi(\nu)\T \tilde P_c \le %
\tfrac 1 2 \nu\T V_L^*\label{eqn:nonnegative necessary and sufficient condition}
\end{align}
for all $\nu\in\mc N_1$, where $\chi(\nu)$ was defined in \eqref{eqn:definition of chi}, where $V_L^*$ are the open-circuit voltages \eqref{eqn:open-circuit voltages}, and where $\mc N_1$ is the standard $n-1$-simplex \eqref{eqn:definition of N_1}. More explicitly, \eqref{eqn:nonnegative necessary and sufficient condition} is equivalent to
\begin{multline*}
\mb 1\T \l[\l([{Y_{LL}}\inv \nu]+ {Y_{LL}} \inv[\nu]\r)\inv V_L^*\r]\inv \tilde P_c %
\le %
\tfrac 1 2 \nu\T V_L^*.
\end{multline*}
Similarly, $\tilde P_c$ is feasible under small perturbation (\ie, $\tilde P_c \in \inter{\mc F}\cap \mc N$) if and only if the inequality in \eqref{eqn:nonnegative necessary and sufficient condition} holds strictly for all $\nu\in\mc N_1$.
\end{theorem}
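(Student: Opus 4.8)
The plan is to derive the stated necessary and sufficient condition directly from the geometric characterization in Corollary~\ref{corollary:geometric characterization nonnegative F in N}, which already asserts that $\mc F\cap\mc N = \mc N \cap \bigcap_{\nu\in\mc N_1} H_{\chi(\nu)}$. Since $\tilde P_c\in\mc N$ is assumed, membership of $\tilde P_c$ in $\mc F\cap\mc N$ reduces to the condition that $\tilde P_c\in H_{\chi(\nu)}$ for every $\nu\in\mc N_1$. The whole task is therefore to unpack the defining inequality of the half-space $H_{\chi(\nu)}$ and show that it simplifies to \eqref{eqn:nonnegative necessary and sufficient condition}.

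\textbf{Key steps.}
By the definition \eqref{eqn:definition of H}, the condition $\tilde P_c\in H_{\chi(\nu)}$ reads $\chi(\nu)\T \tilde P_c \le \|\varphi(\chi(\nu))\|_{h(\chi(\nu))}^2$, so the left-hand side already matches \eqref{eqn:nonnegative necessary and sufficient condition} once one recalls that the half-spaces $H_\lambda$ are invariant under positive scaling of $\lambda$ (so we may use $\chi(\nu)$ in place of its normalization in $\Lambda_1$). The heart of the proof is thus to evaluate the right-hand side $\|\varphi(\lambda)\|_{h(\lambda)}^2$ at $\lambda = \chi(\nu)$ and show it equals $\tfrac12 \nu\T V_L^*$. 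First I would use the definitions \eqref{eqn:definition of norm} and \eqref{eqn:definition of phi} to write
\begin{align*}
\|\varphi(\lambda)\|_{h(\lambda)}^2 = \varphi(\lambda)\T h(\lambda)\varphi(\lambda) = \tfrac14 (\mc I_L^*)\T[\lambda]h(\lambda)\inv[\lambda]\mc I_L^*,
\end{align*}
exploiting the symmetry of $h(\lambda)\inv$. Next I would invoke the duality of Lemma~\ref{lemma:duality boundary D}, in particular the relation \eqref{eqn:duality boundary D} $[\lambda]\tilde V_L = c\mu$ together with $\tilde V_L = \varphi(\lambda) = \psi(\mu)$, to trade the $\lambda$-expression for the dual $\mu$-expression; under $\lambda=\chi(\nu)$ the correspondence of Theorem~\ref{theorem:parametrization nonnegative boundary of F} gives $\mu\propto Y_{LL}\inv\nu$. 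Using $h(\lambda)=\tfrac12([\lambda]Y_{LL}+Y_{LL}[\lambda])$ and the identity $g(\mu)v=g(v)\mu$ from just after \eqref{eqn:definition of g}, one transforms the quadratic form into an expression in $\mu$ and $\mc I_L^* = Y_{LL}V_L^*$, which I expect to collapse to $\tfrac12\mu\T\mc I_L^* = \tfrac12 (Y_{LL}\mu)\T V_L^* \propto \tfrac12\nu\T V_L^*$, the scaling factors matching because both sides were normalized consistently. Finally, the explicit rewriting at the end follows by substituting the second form \eqref{eqn:definition of chi 2} of $\chi(\nu)$ and the identity \eqref{eqn:property of g}; I would spell out that $g(Y_{LL}\inv\nu) = [Y_{LL}\inv\nu]Y_{LL} + [\nu]$ as in the proof of Lemma~\ref{lemma:nonnegative cone in M}, so that $g(Y_{LL}\inv\nu)\T\inv\mc I_L^* = ([Y_{LL}\inv\nu]+Y_{LL}\inv[\nu])\inv V_L^*$ after factoring out $Y_{LL}$.

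\textbf{Main obstacle.}
The hard part will be the bookkeeping of the positive scaling factors when passing between $\nu\in\mc N_1$, the unnormalized $\chi(\nu)$, and the normalized vector in $\Lambda_1$: since $H_\lambda$ is scale-invariant in $\lambda$ but the right-hand side $\|\varphi(\lambda)\|_{h(\lambda)}^2$ is \emph{not} (it scales quadratically in $\varphi$ and $\varphi(\lambda)$ is itself homogeneous in $\lambda$), one must verify that the particular representative $\chi(\nu)$ is chosen so that the right-hand side lands exactly on $\tfrac12\nu\T V_L^*$ rather than a scalar multiple of it. I expect this to hinge on the precise normalization built into the definition \eqref{eqn:definition of chi} of $\chi$, namely the factor $[\,\cdot\,]\inv\mb 1$, and on carefully tracking the constant $c=\lambda\T\varphi(\lambda)$ from Lemma~\ref{lemma:duality boundary D}. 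Once the scaling is pinned down, the strict-inequality (interior) case follows immediately by the same argument applied to the strict versions of the supporting half-spaces, together with Corollary~\ref{corollary:interior F one-to-one D} identifying $\inter{\mc F}$ with the complement of the boundary.
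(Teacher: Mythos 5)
Your proposal follows essentially the same route as the paper's proof: reduce via Corollary~\ref{corollary:geometric characterization nonnegative F in N} to membership in the half-spaces $H_{\chi(\nu)}$ (using their invariance under positive scaling of $\lambda$), and evaluate $\|\varphi(\chi(\nu))\|_{h(\chi(\nu))}^2$ using $h(\lambda)\varphi(\lambda)=\tfrac 1 2 [\lambda]\mc I_L^*$, the normalization built into the definition of $\chi$, and the identity $\varphi(\chi(\nu))=\psi({Y_{LL}}\inv\nu)$ from Theorem~\ref{theorem:parametrization nonnegative boundary of F}, which collapses the right-hand side to exactly $\tfrac 1 2 \nu\T V_L^*$ as you anticipate. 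The one point to tighten is the strict-inequality case: rather than Corollary~\ref{corollary:interior F one-to-one D}, the paper invokes the Part~I fact that equality in $\lambda\T\tilde P_c\le\|\varphi(\lambda)\|_{h(\lambda)}^2$ holds precisely when $\tilde P_c=P_c(\varphi(\lambda))$, together with the parametrization of $\partial\mc F\cap\mc N$ in Theorem~\ref{theorem:parametrization nonnegative boundary of F}, to conclude that strict inequality for all $\nu\in\mc N_1$ characterizes $\inter{\mc F}\cap\mc N$.
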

\begin{proof}
Corollary~\ref{corollary:geometric characterization nonnegative F in N} implies that $\tilde P_c\in \mc F \cap \mc N$ if and only if $\tilde P_c\in\mc N$ and $\tilde P_c\in H_{\chi(\nu)}$ for all $\nu\in\mc N_1$.
By definition of $H_\lambda$, the latter is equivalent to
\begin{align}\label{eqn:nonnegative necessary and sufficient condition:1}
\chi(\nu)\T \tilde P_c \le \|\varphi(\chi(\nu))\|_{h(\chi(\nu))}^2
\end{align}
for all $\nu\in\mc N_1$
We continue by rewriting the right-hand side of \eqref{eqn:nonnegative necessary and sufficient condition:1}. Note that
\begin{align}
\|\varphi(\chi(\nu))\|_{h(\chi(\nu))}^2 &=\varphi(\chi(\nu))\T h(\chi(\nu))\varphi(\chi(\nu))\nolabel\\
&=\tfrac 1 2 \varphi(\chi(\nu))\T [\chi(\nu)]\mc I_L^*,\label{eqn:nonnegative necessary and sufficient condition:2}
\end{align}
where we substituted \eqref{eqn:definition of norm} and \eqref{eqn:definition of phi}.
By substituting \eqref{eqn:definition of chi} in \eqref{eqn:nonnegative necessary and sufficient condition:2} it follows that the right-hand side of \eqref{eqn:nonnegative necessary and sufficient condition:1} equals
\begin{align}\label{eqn:nonnegative necessary and sufficient condition:4}
\tfrac 1 2 \varphi(\chi(\nu))\T [\psi({Y_{LL}}\inv \nu)]\inv [{Y_{LL}}\inv\nu]\mc I_L^*.
\end{align}
Theorem~\ref{theorem:parametrization nonnegative boundary of F} states that $\varphi(\chi(\nu)) = \psi({Y_{LL}}\inv \nu)$, and hence from \eqref{eqn:nonnegative necessary and sufficient condition:4} we deduce that the right-hand side of \eqref{eqn:nonnegative necessary and sufficient condition:1} equals
\begin{align*}
\tfrac 1 2 \mb 1\T [{Y_{LL}}\inv\nu]\mc I_L^* = \tfrac 1 2 \nu\T {Y_{LL}}\inv \mc I_L^* = \tfrac 1 2 \nu\T V_L^*.
\end{align*}
where we used \eqref{eqn:open-circuit voltages}.
The left-hand side of \eqref{eqn:nonnegative necessary and sufficient condition} can be rewritten by observing in \eqref{eqn:definition of chi 2} that
\begin{align*}
g\l({Y_{LL}}\inv \nu\r)\Tinv \mc I_L^* &= (Y_{LL}[{Y_{LL}}\inv \nu] + [\nu])\inv \mc I_L^*\\
&= ([{Y_{LL}}\inv \nu] + {Y_{LL}}\inv[\nu])\inv {Y_{LL}}\inv\mc I_L^*\\
&= ([{Y_{LL}}\inv \nu] + {Y_{LL}}\inv[\nu])\inv V_L^*,
\end{align*}
where we used \eqref{eqn:definition of g} and \eqref{eqn:open-circuit voltages}.

Lemma~\PIref{lemma:lambda f} states that we have equality in \eqref{eqn:nonnegative necessary and sufficient condition:1} if and only if $\tilde P_c = P_c(\varphi(\chi(\nu)))$. Theorem~\ref{theorem:parametrization nonnegative boundary of F} implies that $\tilde P_c\in\partial \mc F\cap \mc N$ if and only if there exists $\nu\in\mc N_1$ such that $\tilde P_c = P_c(\varphi(\chi(\nu)))$. Hence, $\tilde P_c\not\in\partial\mc F\cap \mc N$ if and only if equality in \eqref{eqn:nonnegative necessary and sufficient condition} does not hold for all $\nu\in\mc N_1$. Thus, $\tilde P_c\in\inter{\mc F}\cap \mc N$ if and only if the inequality in \eqref{eqn:nonnegative necessary and sufficient condition} holds strictly for all $\nu\in\mc N_1$. 
\end{proof}

Similar to Remark~\ref{remark:computation of boundary}, we note that the necessary and sufficient condition for feasibility of nonnegative power demands in Theorem~\ref{theorem:nonnegative necessary and sufficient condition} is cheaper to compute than the LMI condition in Theorem~\ref{theorem:necessary and sufficient condition}.
Indeed, Theorem~\ref{theorem:necessary and sufficient condition} relies on the computation of the definiteness of an $n+1\times n+1$-matrix. Similar to the proof of Theorem~\ref{theorem:necessary and sufficient condition}, it can be shown  by means of the Haynsworth inertia additivity formula that this computation is equivalent to identifying set $\Lambda_1$ and verifying that the inequality
\begin{align}\label{eqn:haynsworth component}
\lambda\T \tilde P_c \le \tfrac 1 4 (\mc I_L^*)\T[\lambda] h(\lambda)\inv [\lambda]\mc I_L^* = \|\varphi(\lambda)\|_{h(\lambda)}^2
\end{align}
holds for all $\lambda\in\Lambda_1$. In contrast, Theorem~\ref{theorem:nonnegative necessary and sufficient condition} proves that for a nonnegative power demand it is sufficient to consider only vectors $\lambda$ in the subset of $\Lambda_1$ described by \eqref{eqn:parametrization of Lambda nonnegative powers}, which is parametrized by the simplex $\mc N_1$.
Since $\chi(\nu)\in\Lambda_1$ for all $\nu\in\mc N_1$, it is not necessary to compute $\Lambda_1$ to decide the feasibility of nonnegative power demands.
Alternatively we note that, since the set $\Lambda_1$ is convex by Lemma~\PIref{lemma:Lambda convex}, it is sufficient to verify \eqref{eqn:haynsworth component} for all $\lambda$ in the convex hull of the set \eqref{eqn:parametrization of Lambda nonnegative powers} in order to decide if a nonnegative power demand is feasible.

\begin{remark}
Similar results for positive power demand are obtained by taking $\mc N = \set{\nu\in\RR^n}{\nu>\mb 0}$ throughout this section. In particular, analogous to Theorem~\ref{theorem:nonnegative necessary and sufficient condition}, it can be shown that a vector of positive power demands $\tilde P_c>\mb 0$ is feasible if and only if \eqref{eqn:nonnegative necessary and sufficient condition} holds for all $\nu>\mb 0$, and similar for feasibility under small perturbation.
\end{remark}

\section{Sufficient conditions for power flow feasibility}\label{section:sufficient conditions}
In the remainder of this paper we return to the case where power demands are not restricted to the nonnegative orthant.
In this section we prove two novel sufficient conditions for the feasibility of a vector of power demands,
which generalize the sufficient conditions found in \cite{bolognani2015existence} and \cite{simpson2016voltage}. In addition we show how the conditions in \cite{bolognani2015existence} and \cite{simpson2016voltage} are recovered from the conditions proposed in this section.

The benefit of these sufficient conditions for power flow feasibility
over a necessary and sufficient condition such as Theorem~\ref{theorem:necessary and sufficient condition} is that they are cheaper to compute, and may therefore be more applicable in practical applications.
However, since these sufficient condition are not necessary, they %
cannot guarantee that a power demand is not feasible.

This section is structured as follows. First we show that each feasible vector of power demands gives rise to a sufficient condition for power flow feasibility (Lemma~\ref{lemma:power demand domination}), and derive a sufficient condition from Theorem~\ref{theorem:nonnegative necessary and sufficient condition} (Corollary~\ref{corollary:sufficient condition nonnegative}). 
Next, we propose a weaker sufficient condition (Theorem~\ref{theorem:generalization simpson-porco condition}), which generalizes the condition in \cite{simpson2016voltage} and identifies for which vectors the latter condition is tight. 
Finally we show that Theorem~\ref{theorem:generalization simpson-porco condition} generalizes the sufficient condition in \cite{bolognani2015existence}, and argue why the latter condition is not tight in general (Lemma~\ref{lemma:bolognani boundary}).

\subsection{Sufficient conditions by element-wise domination}
\begin{lemma}[\mainresult{power demand domination}]\label{lemma:power demand domination}
Let $\tilde P_c$ be a feasible power demand (\ie, $\tilde P_c \in \mc F$). If a power demand $\hat P_c$ satisfies $\hat P_c\lneqq\tilde P_c$, then $\hat P_c$ is feasible under small perturbation (\ie, $\hat P_c \in\inter{\mc F}$).
\end{lemma}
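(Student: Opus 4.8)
The plan is to argue by contradiction using the interior version of the LMI characterisation in Theorem~\ref{theorem:necessary and sufficient condition}. Up to the harmless scalar factor $2$ appearing there, that result says $\hat P_c\in\inter{\mc F}$ is equivalent to the nonexistence of a positive vector $\nu>\mb 0$ for which
\begin{align*}
M(\nu,P):=\begin{pmatrix} h(\nu) & \tfrac12[\nu]\mc I_L^* \\ \tfrac12([\nu]\mc I_L^*)\T & \nu\T P\end{pmatrix},\qquad P=\hat P_c,
\end{align*}
is positive semi-definite. So I would assume, for contradiction, that such a $\nu>\mb 0$ exists, and aim to exhibit a positive $\nu$ for which $M(\nu,\tilde P_c)\succ\mb 0$, contradicting the feasibility of $\tilde P_c$.

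The comparison is immediate at the matrix level: $M(\nu,\tilde P_c)$ is obtained from $M(\nu,\hat P_c)$ by adding the scalar $\nu\T(\tilde P_c-\hat P_c)$ to the bottom-right entry, and since $\hat P_c\lneqq\tilde P_c$ while $\nu>\mb 0$, this scalar is strictly positive. Writing a trial vector in terms of its load-block $x$ and its last coordinate $t$, the quadratic form of $M(\nu,\tilde P_c)$ equals that of $M(\nu,\hat P_c)$ plus $\nu\T(\tilde P_c-\hat P_c)\,t^2$; the first summand is nonnegative, so the total is strictly positive whenever $t\neq0$, while for $t=0$ it reduces to $x\T h(\nu)x$. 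Hence everything hinges on upgrading the free conclusion $h(\nu)\succeq\mb 0$ (the leading block of a positive semi-definite matrix) to strict positive definiteness $h(\nu)\succ\mb 0$.

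Establishing $h(\nu)\succ\mb 0$ is the step I expect to be the main obstacle, and it is where the structure of the problem enters. Since $h(\nu)=\tfrac12([\nu]Y_{LL}+Y_{LL}[\nu])$ has off-diagonal entries $\tfrac12(\nu_i+\nu_j)(Y_{LL})_{ij}\le0$ which vanish exactly where those of $Y_{LL}$ do, it is a symmetric Z-matrix sharing the irreducible sparsity pattern of $Y_{LL}$; being moreover positive semi-definite, it is an irreducible symmetric M-matrix. If it were singular, its kernel would be spanned by a strictly positive Perron vector $w>\mb 0$ (Proposition~\ref{proposition:Z-matrix perron result}), and positive semi-definiteness of $M(\nu,\hat P_c)$ would force its off-diagonal block $\tfrac12[\nu]\mc I_L^*$ to lie in the range of $h(\nu)$, hence to be orthogonal to $w$. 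But $\mc I_L^*\gneqq\mb 0$ (Lemma~\ref{lemma:positive open-circuit voltages}) together with $\nu>\mb 0$ gives $[\nu]\mc I_L^*\gneqq\mb 0$, which cannot be orthogonal to a strictly positive vector. This contradiction rules out singularity, so $h(\nu)\succ\mb 0$.

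Combining the two previous paragraphs, $M(\nu,\tilde P_c)\succ\mb 0$ for the positive vector $\nu$, contradicting the feasibility of $\tilde P_c$ via Theorem~\ref{theorem:necessary and sufficient condition}; hence no positive $\nu$ makes $M(\nu,\hat P_c)$ positive semi-definite, and $\hat P_c\in\inter{\mc F}$. I would avoid the alternative geometric route through the half-space description $\mc F=\bigcap_{\lambda\in\Lambda_1}H_\lambda$ of Theorem~\ref{theorem:convexity of F}: there the defining vectors $\lambda\in\Lambda_1$ need not be nonnegative, so $\lambda\T(\tilde P_c-\hat P_c)$ carries no definite sign, whereas the LMI formulation restricts attention to strictly positive $\nu$, which is precisely what makes the domination hypothesis bite.
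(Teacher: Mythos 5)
Your proof is correct, but it takes a different route from the paper's. The paper argues directly with the half-space description of Theorem~\ref{theorem:convexity of F}: since every $\lambda\in\Lambda$ is strictly positive (Lemma~\PIref{lemma:Lambda positive} of Part~I, which follows because the diagonal entries $\lambda_i(Y_{LL})_{ii}$ of $h(\lambda)$ must be positive), $\hat P_c\lneqq\tilde P_c$ gives $\lambda\T\hat P_c<\lambda\T\tilde P_c\le\|\varphi(\lambda)\|_{h(\lambda)}^2$ for all $\lambda\in\Lambda$, whence $\hat P_c\in\mc F$; strictness plus the fact that $\partial\mc F=\set{P_c(\varphi(\lambda))}{\lambda\in\Lambda_1}$ and that equality characterizes the point of support then excludes $\hat P_c\in\partial\mc F$. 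You instead work at the level of the LMI of Theorem~\ref{theorem:necessary and sufficient condition}, adding the positive scalar $\nu\T(\tilde P_c-\hat P_c)$ to the corner entry and upgrading $h(\nu)\succeq\mb 0$ to $h(\nu)\succ\mb 0$ via the range condition for positive semi-definite block matrices and the Perron vector of the irreducible symmetric Z-matrix $h(\nu)$; this is valid and essentially re-derives, in one special case, the Schur-complement link between the LMI and the half-spaces that the paper mentions after Theorem~\ref{theorem:nonnegative necessary and sufficient condition}. The paper's route is shorter because the positivity of $\Lambda$ is already available as a lemma; your route is self-contained modulo Theorem~\ref{theorem:necessary and sufficient condition} but must re-establish, in effect, that the relevant multipliers are positive. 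One factual correction: your closing claim that the vectors $\lambda\in\Lambda_1$ ``need not be nonnegative'' is wrong---positive definiteness of $h(\lambda)$ forces $\lambda>\mb 0$, and this is precisely what makes the paper's geometric argument work.
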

\begin{proof}
Since $\tilde P_c \in \mc F$ we have by Theorem~\ref{theorem:convexity of F} that %
\begin{align*}
\lambda\T \tilde P_c \le \|\varphi(\lambda)\|_{h(\lambda)}^2
\end{align*}
for all $\lambda\in\Lambda$, where we used \eqref{eqn:definition of H}.
Note that $\lambda>\mb 0$ for $\lambda\in\Lambda$ by Lemma~\PIref{lemma:Lambda positive}.
Since $\hat P_c\lneqq\tilde P_c$ we have 
\begin{align}\label{eqn:power demand domination:1}
\lambda\T \hat P_c < \lambda\T \tilde P_c \le \|\varphi(\lambda)\|_{h(\lambda)}^2,
\end{align}
for all $\lambda\in\Lambda$. Hence, $\hat P_c\in\mc F$ by Theorem~\ref{theorem:convexity of F} and \eqref{eqn:definition of H}. Since the inequality in \eqref{eqn:power demand domination:1} is strict, Lemma~\PIref{eqn:lambda f inequality} implies that $\hat P_c\neq P_c(\varphi(\lambda))$ for all $\lambda\in\Lambda$, and therefore $\hat P_c \not\in\partial\mc F$ by Theorem~\ref{theorem:one-to-one boundary} and Theorem~\ref{theorem:parametrization of D}. 
\end{proof}

Lemma~\ref{lemma:power demand domination} shows that any feasible power demand gives rise to a sufficient condition for power flow feasibility. In particular, note that the power demand $\mb 0 = P_c(V_L^*)$ is feasible under small perturbation. Lemma~\ref{lemma:power demand domination} therefore implies the following corollary.
\begin{corollary}\label{corollary:nonnegative power demands are feasible}
Any nonpositive power demand is feasible under small perturbation.
\end{corollary}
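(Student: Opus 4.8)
The plan is to realize the corollary as a single application of Lemma~\ref{lemma:power demand domination}, using the maximizing feasible power demand as the dominating reference. By Lemma~\ref{lemma:maximizing power demand}, the vector $P_{\mathrm{max}} = \tfrac14 [V_L^*]\mc I_L^* \gneqq \mb 0$ is feasible, so it is an element of $\mc F$ whose entries are nonnegative and not all zero. This is the only ingredient I would need beyond the domination lemma itself.

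Now let $\hat P_c \le \mb 0$ be an arbitrary nonpositive power demand. First I would record that $\hat P_c \le \mb 0 \le P_{\mathrm{max}}$, hence $\hat P_c \le P_{\mathrm{max}}$. To upgrade this to the strict relation $\hat P_c \lneqq P_{\mathrm{max}}$ demanded by the hypothesis of Lemma~\ref{lemma:power demand domination}, I would note that $P_{\mathrm{max}} \gneqq \mb 0$ forces at least one index $j$ with $(P_{\mathrm{max}})_j > 0$, whereas $(\hat P_c)_j \le 0$; therefore $\hat P_c \ne P_{\mathrm{max}}$, and consequently $\hat P_c \lneqq P_{\mathrm{max}}$. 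Applying Lemma~\ref{lemma:power demand domination} with $\tilde P_c = P_{\mathrm{max}}$ then yields $\hat P_c \in \inter{\mc F}$, i.e., $\hat P_c$ is feasible under small perturbation. Since $\hat P_c$ was an arbitrary nonpositive demand, this completes the argument.

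An alternative route, matching the remark preceding the statement, is to first observe that $\mb 0 = P_c(V_L^*)$ lies in $\inter{\mc F}$---itself an instance of the $P_{\mathrm{max}}$-domination argument, as $\mb 0 \lneqq P_{\mathrm{max}}$---and then dispatch every strictly nonpositive $\hat P_c \lneqq \mb 0$ via Lemma~\ref{lemma:power demand domination} with $\tilde P_c = \mb 0$, handling $\hat P_c = \mb 0$ as the base case. I would favour the single-step version above, since it treats all nonpositive $\hat P_c$, including $\mb 0$, uniformly and avoids a separate case distinction.

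I do not anticipate a genuine obstacle here: the entire content is carried by Lemma~\ref{lemma:power demand domination}, and the only verification is the elementary strict-inequality bookkeeping $\hat P_c \lneqq P_{\mathrm{max}}$, which rests solely on $P_{\mathrm{max}} \gneqq \mb 0$ from Lemma~\ref{lemma:maximizing power demand}. The one point I would state with care is the distinction between $\le$ and $\lneqq$: the conclusion is the stronger membership $\inter{\mc F}$ rather than merely $\mc F$, and this relies on the dominating inequality being strict in the $\lneqq$ sense, which is precisely what $P_{\mathrm{max}} \gneqq \mb 0$ guarantees against a nonpositive $\hat P_c$.
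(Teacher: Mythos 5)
Your proof is correct and follows essentially the same route as the paper: a single application of Lemma~\ref{lemma:power demand domination} to a dominating feasible reference demand. The paper takes $\mb 0 = P_c(V_L^*)\in\inter{\mc F}$ as that reference (leaving the case $\hat P_c=\mb 0$ implicit), whereas your choice of $P_{\mathrm{max}}\gneqq\mb 0$ treats all nonpositive demands, including $\mb 0$, uniformly---a minor but clean variation.
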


We remark that a vector of nonpositive power demands corresponds to a case in which none of the power loads drain power from the grid and therefore behave as sources. Intuitively it is clear that such a vector of power demands is feasible. Consequently, some of the sources may act as loads and drain the power that is not dissipated in the lines. %

Recall that Theorem~\ref{theorem:nonnegative necessary and sufficient condition} gives a necessary and sufficient condition for the feasibility of a nonnegative power demand.
Lemma~\ref{lemma:power demand domination} allows us to extend Theorem~\ref{theorem:nonnegative necessary and sufficient condition} to a sufficient condition for vectors of power demands which have negative entries.
We define $\mathbf{max}(a,b)\in\RR^n$ as the vector obtained by taking the element-wise maximum of $a,b\in\RR^n$, \ie,
\begin{align*}
\mathbf{max}(a,b)_i := \max(a_i,b_i).
\end{align*}
Note for $\tilde P_c\in \RR^n$ that $\mathbf{max}(\tilde P_c,\mb 0)$ is nonnegative, and that $\tilde P_c\le \mathbf{max}(\tilde P_c,\mb 0)$.
Hence, Theorem~\ref{theorem:nonnegative necessary and sufficient condition} and Lemma~\ref{lemma:power demand domination} directly imply the following sufficient condition for the feasibility of a vector of power demands.
\begin{corollary}[\mainresult{sufficient conditions}]\label{corollary:sufficient condition nonnegative}
A vector of power demands $\tilde P_c\in \RR^n$ is feasible (\ie, $\tilde P_c \in \mc F$) if 
\begin{align*}
\chi(\nu)\T \mathbf{max}(\tilde P_c,\mb 0) \le
\tfrac 1 2 \nu\T V_L^*
\end{align*}
for all $\nu\in\mc N_1$.
\end{corollary}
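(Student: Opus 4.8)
The plan is to reduce the general statement to the nonnegative case already settled by Theorem~\ref{theorem:nonnegative necessary and sufficient condition}, and then transfer feasibility downward along the element-wise order by invoking Lemma~\ref{lemma:power demand domination}. To set this up I would write $Q := \mathbf{max}(\tilde P_c,\mb 0)$. By construction $Q\ge\mb 0$, so $Q\in\mc N$, and moreover $\tilde P_c\le Q$ holds element-wise.

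First I would observe that the hypothesis $\chi(\nu)\T\mathbf{max}(\tilde P_c,\mb 0)\le\tfrac 1 2\nu\T V_L^*$ for all $\nu\in\mc N_1$ is precisely condition \eqref{eqn:nonnegative necessary and sufficient condition} of Theorem~\ref{theorem:nonnegative necessary and sufficient condition} evaluated at the nonnegative vector $Q$. Since $Q\in\mc N$, that theorem applies directly and yields $Q\in\mc F\cap\mc N$; in particular $Q$ is feasible.

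Next I would transfer this feasibility to $\tilde P_c$ itself. If $\tilde P_c=Q$, then $\tilde P_c\in\mc F$ immediately. Otherwise $\tilde P_c\lneqq Q$, and since $Q\in\mc F$, Lemma~\ref{lemma:power demand domination} gives $\tilde P_c\in\inter{\mc F}\subset\mc F$. In either case $\tilde P_c\in\mc F$, which is exactly the claim.

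There is essentially no analytic obstacle in this argument; it is a direct combination of the two quoted results. The only point that requires a little care is the case distinction in the final step, because Lemma~\ref{lemma:power demand domination} assumes the strict element-wise inequality $\hat P_c\lneqq\tilde P_c$ and so does not formally cover the equality case $\tilde P_c=Q$ (which occurs exactly when $\tilde P_c$ is already nonnegative). That case causes no trouble, since then $\tilde P_c=Q\in\mc F$ by the preceding paragraph, but it is worth flagging explicitly rather than applying the lemma blindly.
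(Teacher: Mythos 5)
Your proof is correct and follows exactly the paper's route: the paper likewise applies Theorem~\ref{theorem:nonnegative necessary and sufficient condition} to the nonnegative vector $\mathbf{max}(\tilde P_c,\mb 0)$ and then invokes Lemma~\ref{lemma:power demand domination} via $\tilde P_c\le\mathbf{max}(\tilde P_c,\mb 0)$. Your explicit handling of the equality case $\tilde P_c=\mathbf{max}(\tilde P_c,\mb 0)$, which the lemma's strict hypothesis does not formally cover, is a small but welcome refinement of the paper's one-line derivation.
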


Note that Corollary~\ref{corollary:sufficient condition nonnegative} is necessary and sufficient for nonnegative power demands by Theorem~\ref{theorem:nonnegative necessary and sufficient condition}.

\begin{figure}[t]
\centering
\hspace*{-22pt}
\includegraphics[scale=.6]{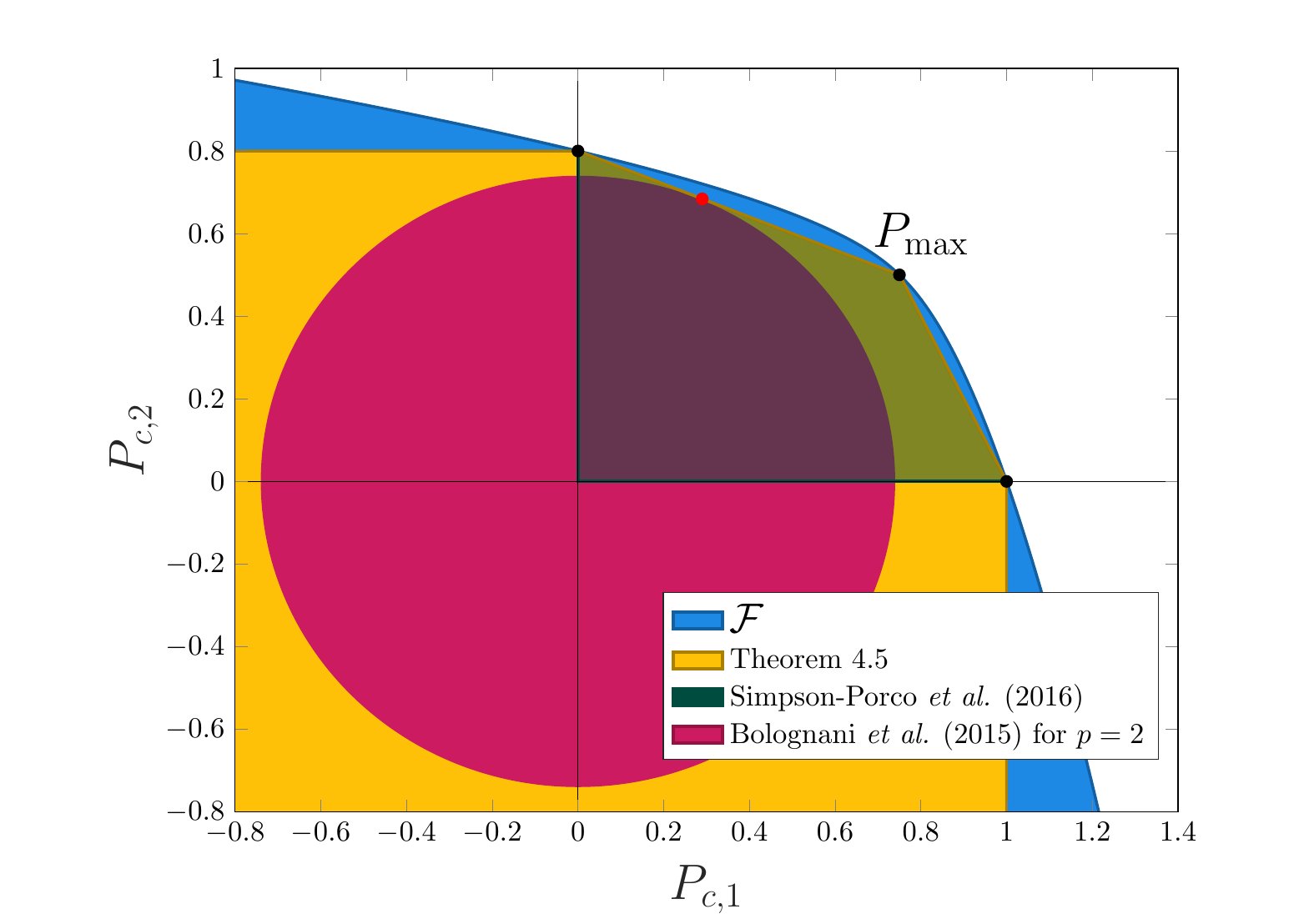}
\caption{\label{figure:polyhedral sufficient condition 2}
A depiction of the set $\mc F$ of feasible power demands for a power grid with two loads. %
The yellow area corresponds to the sufficient condition in Theorem~\ref{theorem:generalization simpson-porco condition}.
The green shaded area corresponds to the %
set described by the sufficient condition in \cite{simpson2016voltage} (see Proposition~\ref{proposition:simpson-porco condition}), and does not include the yellow boundary. The black points are the power demands for which the condition in \cite{simpson2016voltage} is tight, and correspond to the black operating points  in Figure~\ref{figure:two nodes voltage domain}. The red area corresponds to the sufficient condition in \cite{bolognani2015existence} (see Proposition~\ref{proposition:bolognani condition}). The red point indicates a point of intersection of the boundary of the condition in \cite{bolognani2015existence} with either the boundary of the condition in \cite{simpson2016voltage}, or the boundary of the condition in Theorem~\ref{theorem:generalization simpson-porco condition}.}
\end{figure}
\subsection{A generalization of the sufficient condition of Simpson-Porco \textit{et al.} (2016)}%
We proceed by studying known sufficient conditions in the literature and comparing them to Corollary~\ref{corollary:sufficient condition nonnegative}.
The paper \cite{simpson2016voltage} studies the decoupled reactive power flow equations for lossless AC power grids with constant power loads.
The analysis and results in \cite{simpson2016voltage} translate naturally to DC power grids.
In \cite{simpson2016voltage} a sufficient condition for the feasibility of a vector of constant power demands is proposed, which we state in the context of DC power grids.
\begin{proposition}[{\cite[Supplementary Theorem 1]{simpson2016voltage}}]\label{proposition:simpson-porco condition}
Let $\tilde P_c$ be a nonnegative vector of power demands (\ie, $\tilde P_c\in\mc N$), then $\tilde P_c$ is feasible under small perturbation (\ie, $\tilde P_c\in\inter{\mc F}$) if 
\begin{align}
\|(\tfrac 1 4  [V_L^*]Y_{LL}[V_L^*])\inv \tilde P_c \|_\infty < 1.\label{eqn:simpson-porco condition}
\end{align}
This sufficient condition for feasibility is tight since we have 
\begin{align*}
\|(\tfrac 1 4 [V_L^*]Y_{LL}[V_L^*])\inv P_{\mathrm{max}} \|_\infty = 1,
\end{align*}
where $P_{\mathrm{max}}\in\partial\mc F$ is the maximizing power demand defined in Lemma~\ref{lemma:maximizing power demand}, and lies on the boundary of $\mc F$.
\end{proposition}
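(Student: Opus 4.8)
The plan is to construct an explicit operating point for $\tilde P_c$ via a change of variables that separates the quadratic structure of the power-flow map, and then to promote feasibility to feasibility under small perturbation by domination. Write $W := \tfrac14 [V_L^*] Y_{LL} [V_L^*]$. Since $Y_{LL}$ is a symmetric irreducible nonsingular M-matrix and $[V_L^*]$ is a positive diagonal, $W$ is again a symmetric irreducible nonsingular M-matrix, so $W\inv > \mb 0$ by \cite[Thm. 5.12]{fiedler1986special}. Parametrising a candidate operating point as $V_L = [V_L^*](\mb 1 - u)$ and substituting into \eqref{eqn:definition of f}, using $V_L^* - V_L = [V_L^*] u$ and that diagonal matrices commute, gives $P_c(V_L) = 4[\mb 1 - u] W u$. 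Hence $V_L > \mb 0$ solves \eqref{eqn:dc power flow equation} for $\tilde P_c$ precisely when $u < \mb 1$ satisfies the fixed-point equation $u = F(u) := \tfrac14 W\inv [\mb 1 - u]\inv \tilde P_c$.

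Since $W\inv \ge \mb 0$ and $\tilde P_c \ge \mb 0$, the map $F$ is nonnegative and monotone nondecreasing on the order interval $\{\,\mb 0 \le u < \mb 1\,\}$, because raising $u$ raises the diagonal entries of $[\mb 1 - u]\inv$. The crux is to exhibit a supersolution bounded away from $\mb 1$. Put $\zeta := \|W\inv \tilde P_c\|_\infty < 1$ (the hypothesis); as $W\inv \tilde P_c \ge \mb 0$ we have $\mb 0 \le W\inv \tilde P_c \le \zeta\,\mb 1$. Let $\beta := \tfrac12\big(1 - \sqrt{1-\zeta}\,\big) \in [0,\tfrac12)$, the smaller root of $4\beta(1-\beta) = \zeta$, and set $\bar u := \beta\,\mb 1$. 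Then $F(\bar u) = \tfrac{1}{4(1-\beta)} W\inv \tilde P_c$, whose entries are at most $\tfrac{\zeta}{4(1-\beta)} = \beta$, so $F(\bar u) \le \bar u$. By monotonicity $F$ maps the compact box $[\mb 0, \bar u]$ into itself, so Brouwer's theorem — or, equivalently, the monotone iteration $u_0 = \mb 0$, $u_{k+1} = F(u_k)$ — produces a fixed point $u^\star \in [\mb 0, \bar u]$. As $u^\star \le \bar u < \mb 1$, the vector $V_L = [V_L^*](\mb 1 - u^\star) > \mb 0$ is an operating point, whence $\tilde P_c \in \mc F$.

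For the sharper conclusion $\tilde P_c \in \inter{\mc F}$ I would exploit that the hypothesis is open: for sufficiently small $\epsilon > 0$ the demand $\hat P_c := \tilde P_c + \epsilon\,\mb 1$ is still nonnegative and still satisfies $\|W\inv \hat P_c\|_\infty < 1$ by continuity, so $\hat P_c \in \mc F$ by the construction above. Since $\tilde P_c \lneqq \hat P_c$, Lemma~\ref{lemma:power demand domination} yields $\tilde P_c \in \inter{\mc F}$. The tightness statement is then immediate: from $P_{\mathrm{max}} = \tfrac14 [V_L^*]\mc I_L^* = \tfrac14 [V_L^*] Y_{LL} V_L^* = W \mb 1$ we get $W\inv P_{\mathrm{max}} = \mb 1$, so $\|W\inv P_{\mathrm{max}}\|_\infty = 1$, while $P_{\mathrm{max}} \in \partial\mc F$ because by Lemma~\ref{lemma:maximizing power demand} it maximises the linear functional $\mb 1\T(\cdot)$ over the closed convex set $\mc F$ and so cannot be interior.

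I expect the main obstacle to be the supersolution estimate $F(\bar u) \le \bar u$: pinning down the scalar $\beta$ and verifying the bound is exactly where the constant $1$ in $\|W\inv \tilde P_c\|_\infty < 1$ enters, and it is the $n$-dimensional shadow of the discriminant condition for the scalar quadratic $4u(1-u) = \zeta$. Everything else — positivity of $W\inv$, monotonicity of $F$, the perturbation step, and the tightness computation — is routine given the M-matrix facts the paper already invokes.
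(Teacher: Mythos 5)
Your proof is correct, but it takes a genuinely different route from the paper's. The paper does not prove Proposition~\ref{proposition:simpson-porco condition} directly: it first establishes the stronger Theorem~\ref{theorem:generalization simpson-porco condition} (via Weyl's theorem applied to the polytope $\{\tilde P_c \ge \mb 0,\ \hat Y_{LL}\inv \tilde P_c \le \tfrac14\mb 1\}$, identification of its vertices with maximizing power demands of Kron-reduced grids, and the convexity of $\mc F$), and then derives the proposition in two lines as a special case. You instead give a direct, self-contained fixed-point construction of an operating point, which is in spirit the argument of the original reference \cite{simpson2016voltage} that the paper explicitly declines to reproduce. Your computations check out: the substitution $V_L=[V_L^*](\mb 1-u)$ does give $P_c(V_L)=4[\mb 1-u]Wu$, the map $F$ is monotone on $\{\mb 0\le u<\mb 1\}$ because $W\inv>\mb 0$ and $\tilde P_c\ge\mb 0$, the supersolution bound $F(\beta\mb 1)\le\beta\mb 1$ with $4\beta(1-\beta)=\zeta$ is exactly right, and the promotion to $\inter{\mc F}$ via perturbation and Lemma~\ref{lemma:power demand domination} is a legitimate application of that lemma (with the feasible dominating vector $\hat P_c$ and the strictly dominated $\tilde P_c$). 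The trade-off is clear: your argument is elementary and constructive --- it produces an explicit monotone iteration converging to an operating point together with the a priori bound $V_L\ge(1-\beta)V_L^*$ --- but it proves only the proposition and verifies tightness only at $P_{\mathrm{max}}$, whereas the paper's detour through Theorem~\ref{theorem:generalization simpson-porco condition} buys the full list of boundary points \eqref{eqn:generalization simpson-porco condition powers} at which the condition is tight, at the cost of relying on the convexity machinery of Part~I. One cosmetic remark: Brouwer's theorem and the monotone iteration are not ``equivalent'' in general, but both do apply here, so nothing is lost.
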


Proposition~\ref{proposition:simpson-porco condition} applies only to nonnegative power demands and is not necessary in general. It is therefore weaker than Theorem~\ref{theorem:nonnegative necessary and sufficient condition} and Corollary~\ref{corollary:sufficient condition nonnegative}.
The proof of Proposition~\ref{proposition:simpson-porco condition} in \cite{simpson2016voltage} relies on a fixed point argument.
The following result generalizes Proposition~\ref{proposition:simpson-porco condition}, 
and identifies
all power demands for which the condition \eqref{eqn:simpson-porco condition} is tight (\ie, the power demands on the boundary of $\mc F$ so that equality in \eqref{eqn:simpson-porco condition} holds).

\begin{theorem}[\mainresult{sufficient conditions}]\label{theorem:generalization simpson-porco condition}
A vector of power demands $\tilde P_c$ is feasible (\ie, $\tilde P_c\in\mc F$) if 
\begin{align}\label{eqn:generalization simpson-porco condition}
(\tfrac 1 4 [V_L^*]Y_{LL}[V_L^*])\inv \mathbf{max}(\tilde P_c,\mb 0) \le \mb 1,
\end{align}
and feasible under small perturbation (\ie, $\tilde P_c\not\in\inter{\mc F}$) if $\tilde P_c$ is not of the form
\begin{align}\label{eqn:generalization simpson-porco condition powers}
\begin{aligned}
(\tilde P_c)_{[\alpha]} &= \tfrac 1 4 [(V_L^*)_{[\alpha]}]((Y_{LL} / (Y_{LL})_{[\alpha\comp,\alpha\comp]})(V_L^*)_{[\alpha]}; \\
(\tilde P_c)_{[\alpha\comp]} &=\mb 0
\end{aligned}
\end{align}
for all nonempty $\alpha\subset \boldsymbol n$.
\end{theorem}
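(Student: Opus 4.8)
The plan is to reduce to the nonnegative case and to read \eqref{eqn:generalization simpson-porco condition} as membership in a polytope whose vertices are exactly the points \eqref{eqn:generalization simpson-porco condition powers}. Write $\hat P_c:=\mathbf{max}(\tilde P_c,\mb 0)\ge\mb 0$ and $W:=\tfrac 1 4[V_L^*]Y_{LL}[V_L^*]$, so that \eqref{eqn:generalization simpson-porco condition} reads $\hat P_c\in S$, where $S:=\{P : P\ge\mb 0,\ W\inv P\le\mb 1\}$. Since $Y_{LL}$ is a symmetric M-matrix and $[V_L^*]>\mb 0$, the matrix $W$ is a symmetric M-matrix with $W\inv>\mb 0$, and $W\mb 1=\tfrac 1 4[V_L^*]\mc I_L^*=P_{\mathrm{max}}$ by \eqref{eqn:max total power demand}. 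Because $\tilde P_c\le\hat P_c$, once $\hat P_c\in\mc F$ is known it follows that $\tilde P_c\in\mc F$ trivially when $\tilde P_c=\hat P_c$, and $\tilde P_c\in\inter{\mc F}$ by Lemma~\ref{lemma:power demand domination} when $\tilde P_c\lneqq\hat P_c$ (that is, when $\tilde P_c$ has a negative entry).

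For feasibility I would show $S\subset\mc F$ by a scaling and closedness argument. Fix any $P\in S$. For $t\in[0,1)$ the vector $tP$ is nonnegative and $\mb 0\le W\inv(tP)=t\,W\inv P\le t\mb 1$, so $\|W\inv(tP)\|_\infty\le t<1$, and Proposition~\ref{proposition:simpson-porco condition} gives $tP\in\inter{\mc F}$. Letting $t\uparrow 1$ and using that $\mc F$ is closed (Theorem~\ref{theorem:convexity of F}) yields $P\in\mc F$; as $P\in S$ was arbitrary, $S\subset\mc F$, and in particular $\hat P_c\in\mc F$. This proves the feasibility assertion. (If one prefers not to invoke Proposition~\ref{proposition:simpson-porco condition}, the same inclusion follows by checking that each vertex \eqref{eqn:generalization simpson-porco condition powers} is feasible: Kron-reducing the zero-demand nodes $\alpha\comp$ produces the load block $Y_{LL}/(Y_{LL})_{[\alpha\comp,\alpha\comp]}$ with open-circuit voltages $(V_L^*)_{[\alpha]}$, whose maximizing power demand, by Lemma~\ref{lemma:maximizing power demand}, is precisely \eqref{eqn:generalization simpson-porco condition powers}, so $S\subset\mc F$ follows from convexity of $\mc F$.)

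Next I identify the vertices of $S$. Under the linear isomorphism $P=Wx$ the set $S$ is the image of $\{x : Wx\ge\mb 0,\ x\le\mb 1\}$, so it suffices to describe the vertices of the latter. Choosing $\alpha\subset\boldsymbol n$ and imposing the complementary active constraints $x_{[\alpha]}=\mb 1$ and $(Wx)_{[\alpha\comp]}=\mb 0$ determines $x$, and back in $P$-coordinates gives $P_{[\alpha\comp]}=\mb 0$ together with $P_{[\alpha]}=(W/W_{[\alpha\comp,\alpha\comp]})\mb 1$. Since $W$ is the congruence of $Y_{LL}$ by the positive diagonal $[V_L^*]$, its Schur complement factors as $W/W_{[\alpha\comp,\alpha\comp]}=\tfrac 1 4[(V_L^*)_{[\alpha]}](Y_{LL}/(Y_{LL})_{[\alpha\comp,\alpha\comp]})[(V_L^*)_{[\alpha]}]$, whence $P_{[\alpha]}=\tfrac 1 4[(V_L^*)_{[\alpha]}](Y_{LL}/(Y_{LL})_{[\alpha\comp,\alpha\comp]})(V_L^*)_{[\alpha]}$, exactly \eqref{eqn:generalization simpson-porco condition powers}, with $\alpha=\emptyset$ giving the vertex $\mb 0$. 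The M-matrix sign pattern of $W$ is what guarantees that every vertex arises from such a complementary choice, so the vertices of $S$ are precisely $\mb 0$ and the points \eqref{eqn:generalization simpson-porco condition powers} over nonempty $\alpha$.

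Finally, the interior assertion follows from strict convexity of $\mc F$. By Theorem~\PIref{theorem:supporting half-spaces of F} each supporting hyperplane of $\mc F$ touches $\mc F$ in a single point, so $\mc F$ is strictly convex. If a nonnegative $\hat P_c\in S$ were to lie on $\partial\mc F$, a supporting hyperplane of $\mc F$ at $\hat P_c$ would meet $\mc F$, and hence its subset $S$, only at $\hat P_c$, forcing $\hat P_c$ to be a vertex of $S$; thus $\hat P_c$ would be $\mb 0$ or of the form \eqref{eqn:generalization simpson-porco condition powers}. Since $\mb 0\in\inter{\mc F}$ by Corollary~\ref{corollary:nonnegative power demands are feasible}, any $\tilde P_c=\hat P_c\in S$ not of the form \eqref{eqn:generalization simpson-porco condition powers} cannot lie on $\partial\mc F$ and therefore lies in $\inter{\mc F}$; a $\tilde P_c$ with a negative entry is never of that (nonnegative) form and was already placed in $\inter{\mc F}$ by domination. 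The main obstacle is the vertex enumeration: the Schur-complement computation is routine, but ruling out non-complementary vertices of $S$ genuinely uses the M-matrix structure of $W$, and the interior statement must lean on strict convexity of $\mc F$ (Theorem~\PIref{theorem:supporting half-spaces of F}) rather than on convexity alone.
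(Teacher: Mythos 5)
Your proposal is essentially correct, but it reaches the conclusion by a genuinely different route for the feasibility half, and it leaves the one genuinely delicate step as an assertion. The paper's proof never invokes Proposition~\ref{proposition:simpson-porco condition}: it applies Weyl's theorem directly to the polytope $S$ in the $P$-coordinates, identifies the generating points $P_c^\alpha$ via the block-inverse formula, proves each $P_c^\alpha$ with $\alpha\neq\emptyset$ lies on $\partial\mc F$ by Kron-reducing the nodes in $\alpha\comp$ and recognizing \eqref{eqn:generalization simpson-porco condition powers} as the maximizing power demand of the reduced grid (Lemma~\ref{lemma:maximizing power demand}), and then gets $S\subset\mc F$ from convexity (Theorem~\ref{theorem:convexity of F}) and the interior statement from the fact that $\partial\mc F$ contains no line segment (Theorem~\PIref{theorem:supporting half-spaces of F}). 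Your primary feasibility argument (scale by $t<1$, apply Proposition~\ref{proposition:simpson-porco condition}, let $t\uparrow 1$ and use closedness of $\mc F$) is logically sound only if Proposition~\ref{proposition:simpson-porco condition} is taken as an external input with its own fixed-point proof from the literature; within this paper that proposition is \emph{derived from} the present theorem, so your primary route would be circular in the paper's own architecture. Your parenthetical alternative \textemdash\ verifying feasibility of each vertex by Kron reduction and then invoking convexity \textemdash\ is exactly the paper's argument and is the one you should promote to the main line. Your treatment of the interior assertion via exposed points (a supporting hyperplane of $\mc F$ meets $\mc F$, hence $S$, in a single point, so any $\hat P_c\in S\cap\partial\mc F$ is a vertex of $S$) is a clean mirror image of the paper's no-line-segment argument and is fine. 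The remaining soft spot is the vertex enumeration: you assert that the M-matrix structure of $W$ rules out non-complementary vertices but do not prove it, and this is precisely where the content lies (e.g.\ one must exclude vertices at which $x_i=1$ and $(Wx)_i=0$ are simultaneously active for some $i$, which requires the sign pattern of $W$ and $W\mb 1=P_{\mathrm{max}}\gneqq\mb 0$). The paper discharges this by organizing the argument through Weyl's theorem and arguing that the only normals completing the span of the active constraints indexed by $\alpha$ are $e_i$ for $i\in\alpha\comp$; to make your version complete you would need to supply the analogous complementarity argument rather than assert it.
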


The proof of Theorem~\ref{theorem:generalization simpson-porco condition} is found in Appendix~\ref{appendix:proof of sufficient condition}.
Note that Theorem~\ref{theorem:generalization simpson-porco condition} is weaker than Corollary~\ref{corollary:sufficient condition nonnegative}, but is cheaper to compute.
Proposition~\ref{proposition:simpson-porco condition} is recovered from Theorem~\ref{theorem:generalization simpson-porco condition} as follows.
\begin{proofof}{Proposition~\ref{proposition:simpson-porco condition}}
Let $\tilde P_c\in \mc N$ which implies that $\tilde P_c = \mathbf{max}(\tilde P_c,\mb 0)$. Let $\tilde P_c$ satisfy \eqref{eqn:simpson-porco condition}, which is therefore equivalent to
\begin{align}\label{eqn:simpson-porco condition:1}
- \mb 1 < (\tfrac 1 4  [V_L^*]Y_{LL}[V_L^*])\inv \mathbf{max}(\tilde P_c,\mb 0) < \mb 1.
\end{align}
It follows from Theorem~\ref{theorem:generalization simpson-porco condition} that $\tilde P_c\in\mc F$.
The latter inequality in \eqref{eqn:simpson-porco condition:1} is strict, and therefore $\tilde P_c$ lies in the interior of the set described by \eqref{eqn:generalization simpson-porco condition}, and hence $\tilde P_c\in\inter{\mc F}$.
\end{proofof}

Theorem~\ref{theorem:generalization simpson-porco condition} states that the power demands described by \eqref{eqn:generalization simpson-porco condition powers} are the only power demands which satisfy \eqref{eqn:generalization simpson-porco condition} and lie on the boundary of $\mc F$. The condition in \eqref{eqn:simpson-porco condition} is therefore tight for such power demands. 
\begin{remark}
Note that if $\alpha=\boldsymbol n$ in \eqref{eqn:generalization simpson-porco condition powers} we obtain the maximizing power, since 
\begin{align*}
\tfrac 1 4[V_L^*]Y_{LL}[V_L^*]\mb 1 = \tfrac 1 4[V_L^*]\mc I_L^* = P_{\text{max}},
\end{align*}
by \eqref{eqn:open-circuit voltages} and \eqref{eqn:max total power demand}.
The proof of Theorem~\ref{theorem:generalization simpson-porco condition} shows that the the power demands described by \eqref{eqn:generalization simpson-porco condition powers} correspond to the maximizing power demands of all power grids obtained by Kron-reduction (see, \eg, \cite{dorfler2012kron}). The power flow of such power grids is equivalent to power flow of the full power grid, with the additional restriction that the currents at the loads indexed by $\alpha$ vanish (\ie, $(\mc I_L)_{[\alpha]}=\mb 0$).
\end{remark}

\subsection{On the sufficient condition of Bolognani \& Zampieri (2015)}%
We conclude this section by showing that Theorem~\ref{theorem:generalization simpson-porco condition} also generalizes the sufficient condition in \cite{bolognani2015existence}.
The paper \cite{bolognani2015existence} studies the power flow equation of an AC power grid with constant power loads and a single source node. The analysis and results in \cite{bolognani2015existence} translate naturally to DC power grids with a single source node. 
The next lemma show that the results in \cite{bolognani2015existence} apply to DC power grids with multiple sources as well, which allows us to compare Theorem~\ref{theorem:generalization simpson-porco condition} and \cite{bolognani2015existence}.
\begin{lemma}\label{lemma:single source}
Let $\mc P$ denote a DC power grid with constant power loads with $n$ loads and $m$ sources as described in Section~\ref{section:summary of part 1}. Let $\hat{\mc P}$ denote the DC power grid with $n$ loads and a single source, of which the Kirchhoff matrix satisfies
\begin{align}\label{eqn:single source}
\hat Y = \begin{pmatrix}
\hat Y_{LL} & \hat Y_{LS} \\ \hat Y_{SL} & \hat Y_{SS}
\end{pmatrix} = \begin{pmatrix}
[V_L^*]Y_{LL}[V_L^*] & - [V_L^*]\mc I_L^* \\ - (\mc I_L^*)\T [V_L^*] & (V_L^*) \T \mc I_L^*
\end{pmatrix}.%
\end{align}
and where the source voltage equals $\hat V_S=1$.
The feasibility of the power flow equations of $\mc P$ and $\hat{\mc P}$ is equivalent.
\end{lemma}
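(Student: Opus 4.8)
The plan is to prove the stronger statement that $\mc P$ and $\hat{\mc P}$ realize the \emph{same} set of feasible power demands, by exhibiting an explicit diagonal change of voltage variables. Since the power flow equation \eqref{eqn:dc power flow equation} depends on the sources only through $V_L^*$ and $\mc I_L^* = Y_{LL}V_L^*$, the first step is to compute these quantities for $\hat{\mc P}$. From \eqref{eqn:source-injected currents} with $\hat V_S = 1$ and the block structure in \eqref{eqn:single source}, the source-injected currents of $\hat{\mc P}$ are $\hat{\mc I}_L^* = -\hat Y_{LS}\hat V_S = [V_L^*]\mc I_L^*$, and by \eqref{eqn:open-circuit voltages} its open-circuit voltages are
\[
\hat V_L^* = \hat Y_{LL}\inv \hat{\mc I}_L^* = ([V_L^*]Y_{LL}[V_L^*])\inv [V_L^*]\mc I_L^* = [V_L^*]\inv Y_{LL}\inv \mc I_L^* = \mb 1,
\]
where I used $V_L^* = Y_{LL}\inv \mc I_L^*$ and that diagonal matrices commute. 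The clean identity $\hat V_L^* = \mb 1$ is what drives the rest of the proof.

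Second, I would verify that $\hat{\mc P}$ is an admissible DC power grid of the kind described in Section~\ref{section:summary of part 1}, so that the single-source analysis of \cite{bolognani2015existence} may legitimately be invoked. Because $V_L^* > \mb 0$ (Lemma~\ref{lemma:positive open-circuit voltages}) and $Y_{LL}$ is a Z-matrix, the off-diagonal entries of $\hat Y_{LL} = [V_L^*]Y_{LL}[V_L^*]$ retain the sign of those of $Y_{LL}$ and stay nonpositive, while $\hat Y_{LS} = -[V_L^*]\mc I_L^* \le \mb 0$. A short computation using $Y_{LL}V_L^* = \mc I_L^*$ shows $\hat Y\mb 1 = \mb 0$, so $\hat Y$ is a genuine Kirchhoff matrix; together with $\hat V_L^* = \mb 1 > \mb 0$ and $\hat{\mc I}_L^* = [V_L^*]\mc I_L^* \gneqq \mb 0$ this confirms the standing assumptions hold for $\hat{\mc P}$.

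The heart of the argument is the substitution $V_L = [V_L^*]\hat V_L$, i.e. $\hat V_L = [V_L^*]\inv V_L$, which is a bijection of the positive orthant onto itself since $V_L^* > \mb 0$. Inserting this into the power-demand map \eqref{eqn:definition of f} for $\hat{\mc P}$ and using repeatedly that diagonal matrices commute (so that $[\hat V_L] = [V_L^*]\inv[V_L]$ and $[V_L^*](\mb 1 - \hat V_L) = V_L^* - V_L$), one obtains
\[
[\hat V_L]\hat Y_{LL}(\hat V_L^* - \hat V_L) = [V_L^*]\inv[V_L][V_L^*]Y_{LL}(V_L^* - V_L) = [V_L]Y_{LL}(V_L^* - V_L),
\]
which is exactly $P_c(V_L)$ for $\mc P$. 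Hence the demand associated to $\hat V_L$ in $\hat{\mc P}$ equals the demand associated to $V_L = [V_L^*]\hat V_L$ in $\mc P$; since the substitution is a bijection of positive voltage vectors, the two grids admit identical sets of feasible power demands, which establishes the equivalence.

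I expect the only delicate points to be bookkeeping rather than conceptual: keeping the diagonal-matrix identities straight (in particular $[[V_L^*]\inv V_L] = [V_L^*]\inv [V_L]$ and the cancellation $[V_L^*]\inv[V_L][V_L^*] = [V_L]$), and carefully checking in the second step that $\hat{\mc P}$ is a bona fide grid, which is precisely what makes it legitimate to transfer the single-source result of \cite{bolognani2015existence} to $\mc P$.
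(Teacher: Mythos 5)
Your proposal is correct and follows essentially the same route as the paper's proof: verify that $\hat Y$ is a genuine Kirchhoff matrix, then use the diagonal change of variables $V_L = [V_L^*]\hat V_L$ (a bijection of the positive orthant) to show the two grids induce the same power-demand map. Your extra explicit computation of $\hat V_L^* = \mb 1$ is a harmless reorganization of the same algebra.
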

\begin{proof}
We first verify that $\hat Y$ is indeed a Kirchhoff matrix.
Note that $[V_L^*]Y_{LL}[V_L^*]\mb 1 = [V_L^*]\mc I_L^*$ by \eqref{eqn:open-circuit voltages}, and so $\hat Y\mb 1 = \mb 0$. Also, since $Y_{LL}$ is an irreducible Z-matrix, and $V_L^*>\mb 0$ and $\mc I_L^*\gneqq \mb 0$, $\hat Y$ is also an irreducible Z-matrix, and therefore a Kirchhoff matrix.
The powers injected at the loads in power grid $\hat{\mc P}$ satisfy
\begin{align*}
\hat P_L(\hat V_L) &= [\hat V_L](\hat Y_{LL} \hat V_L + \hat Y_{LS} \hat V_S) \\
&= [\hat V_L]([V_L^*]Y_{LL}[V_L^*] \hat V_L - [V_L^*]\mc I_L^*) \\
&= [\hat V_L] [V_L^*] (Y_{LL} [V_L^*] \hat V_L - Y_{LL} V_L^*) 
= P_L([V_L^*]\hat V_L).
\end{align*}
where we used \eqref{eqn:open-circuit voltages} and \eqref{eqn:power at the nodes}. We therefore have $\hat P_L(\hat V_L) = P_L(V_L)$ by taking $V_L = [V_L^*]\hat V_L$. Hence, given $P_c\in\RR^n$, we have that $\hat P_L(\hat V_L) = P_c$ is feasible for some $\hat V_L>\mb 0$ if and only if $P_L(V_L) = P_c$ is feasible for some $V_L>\mb 0$.
\end{proof}

We continue by formulating the sufficient condition in \cite{bolognani2015existence}.
We follow \cite{bolognani2015existence} and define for $p\in[1,\infty]$ the matrix norm
\begin{align}
\|A\|_p^\star: = \max_j\{\|A_{[j, \boldsymbol n]}\|_p\},\label{eqn:bolognani norm}
\end{align}
where $A_{[j, \boldsymbol n]}$ denotes the $j$-th row of $A$.
The sufficient condition for power flow feasibility of \cite{bolognani2015existence} in the context of DC power grids is given as follows.
\begin{proposition}[{\cite[Thm. 1]{bolognani2015existence}}]\label{proposition:bolognani condition}
Let $\tilde P_c\in\RR^n$ be a vector of power demands, then $\tilde P_c$ is feasible under small perturbation (\ie, $\tilde P_c\in\inter{\mc F}$) if for some $p,q\in[1,\infty]$ such that $\tfrac 1 p + \tfrac 1 q = 1$ we have
\begin{align}
\|(\tfrac 1 4 [V_L^*]Y_{LL}[V_L^*])\inv \|_q^\star ~\| \tilde P_c \|_p < 1.\label{eqn:bolognani condition}
\end{align}
\end{proposition}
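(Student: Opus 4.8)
The plan is to derive this condition from Theorem~\ref{theorem:generalization simpson-porco condition}, which the section preamble advertises as a generalization of \cite{bolognani2015existence}; so it suffices to show that the hypothesis \eqref{eqn:bolognani condition} forces condition \eqref{eqn:generalization simpson-porco condition} to hold, and to hold strictly. Write $W := \tfrac14[V_L^*]Y_{LL}[V_L^*]$. Since $Y_{LL}$ is an irreducible M-matrix its inverse is entrywise positive by \cite[Thm. 5.12]{fiedler1986special}, and because $[V_L^*]$ is a positive diagonal matrix, $W\inv = 4[V_L^*]\inv {Y_{LL}}\inv [V_L^*]\inv$ is entrywise positive as well.

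First I would estimate each coordinate of $W\inv\mathbf{max}(\tilde P_c,\mb 0)$. As $W\inv$ is entrywise positive and $\mathbf{max}(\tilde P_c,\mb 0)\ge\mb 0$, Hölder's inequality applied row-by-row, with the conjugate exponents $p,q$, gives
\begin{align*}
\l(W\inv\mathbf{max}(\tilde P_c,\mb 0)\r)_j \le \|(W\inv)_{[j,\boldsymbol n]}\|_q\,\|\mathbf{max}(\tilde P_c,\mb 0)\|_p \le \|W\inv\|_q^\star\,\|\tilde P_c\|_p,
\end{align*}
where the last step uses the definition \eqref{eqn:bolognani norm} together with $\|\mathbf{max}(\tilde P_c,\mb 0)\|_p\le\|\tilde P_c\|_p$ (valid for every $p\in[1,\infty]$, since truncation decreases each entry in absolute value). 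Under \eqref{eqn:bolognani condition} the right-hand side is strictly below $1$, so $W\inv\mathbf{max}(\tilde P_c,\mb 0)<\mb 1$ element-wise. In particular \eqref{eqn:generalization simpson-porco condition} is satisfied, and Theorem~\ref{theorem:generalization simpson-porco condition} yields $\tilde P_c\in\mc F$.

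To upgrade feasibility to feasibility under small perturbation I would invoke the second clause of Theorem~\ref{theorem:generalization simpson-porco condition}, which requires only that $\tilde P_c$ not be one of the boundary vectors \eqref{eqn:generalization simpson-porco condition powers}. For each nonempty $\alpha\subset\boldsymbol n$ the corresponding boundary vector is nonnegative — its $\alpha$-block equals $\tfrac14[(V_L^*)_{[\alpha]}]\big((\mc I_L^*)_{[\alpha]}-(Y_{LL})_{[\alpha,\alpha\comp]}(Y_{LL})_{[\alpha\comp,\alpha\comp]}\inv(\mc I_L^*)_{[\alpha\comp]}\big)\ge\mb 0$ by the sign pattern of the M-matrix $Y_{LL}$ and $\mc I_L^*\gneqq\mb 0$ — so that $\mathbf{max}(\tilde P_c,\mb 0)=\tilde P_c$; and a block inversion of ${Y_{LL}}\inv$ through the Schur-complement identity shows $(W\inv\tilde P_c)_{[\alpha]}=\mb 1$. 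Hence every boundary vector attains the value $1$ in some coordinate of $W\inv\mathbf{max}(\tilde P_c,\mb 0)$, which contradicts the strict inequality established above; therefore $\tilde P_c$ avoids all the forms \eqref{eqn:generalization simpson-porco condition powers} and lies in $\inter{\mc F}$.

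The main obstacle is this last interior claim: verifying $(W\inv\tilde P_c)_{[\alpha]}=\mb 1$ for the Kron-reduction boundary vectors needs the Schur-complement block structure of ${Y_{LL}}\inv$ and the sign pattern of the reduced Kirchhoff matrix $Y_{LL}/(Y_{LL})_{[\alpha\comp,\alpha\comp]}$, which is exactly the computation carried out in the proof of Theorem~\ref{theorem:generalization simpson-porco condition} and summarized in Lemma~\ref{lemma:bolognani boundary}; by contrast the feasibility part is a one-line Hölder estimate. An alternative route sidesteps Theorem~\ref{theorem:generalization simpson-porco condition} altogether: Lemma~\ref{lemma:single source} reduces $\mc P$ to the single-source grid $\hat{\mc P}$, whose open-circuit voltages are $\hat V_L^*=\mb 1$ and whose matrix $\tfrac14[\hat V_L^*]\hat Y_{LL}[\hat V_L^*]$ equals $W$, so the single-source form of \cite[Thm. 1]{bolognani2015existence} applies directly. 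I would present the self-contained derivation above as the primary argument and cite this reduction as corroboration.
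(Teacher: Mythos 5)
Your proof is correct, and the core estimate is the same as the paper's: a row-by-row H\"older bound that converts \eqref{eqn:bolognani condition} into the element-wise Simpson--Porco-type inequality for a nonnegative surrogate of $\tilde P_c$. The routes diverge in two places. First, the paper's surrogate is the element-wise absolute value $\hat P_c$ with $(\hat P_c)_i=|(\tilde P_c)_i|$ (so $\|\hat P_c\|_p=\|\tilde P_c\|_p$ exactly), whereas you use $\mathbf{max}(\tilde P_c,\mb 0)$ with $\|\mathbf{max}(\tilde P_c,\mb 0)\|_p\le\|\tilde P_c\|_p$; both work. Second, and more substantively, the paper finishes by feeding $\hat P_c$ into Proposition~\ref{proposition:simpson-porco condition} (which already delivers $\hat P_c\in\inter{\mc F}$) and then invoking Lemma~\ref{lemma:power demand domination} with $\tilde P_c\le\hat P_c$, so the interior conclusion comes for free; you instead go through Theorem~\ref{theorem:generalization simpson-porco condition} and must separately exclude the boundary forms \eqref{eqn:generalization simpson-porco condition powers}, which you do correctly by checking that each such vector satisfies $\bigl((\tfrac14[V_L^*]Y_{LL}[V_L^*])\inv \tilde P_c\bigr)_{[\alpha]}=\mb 1$ via the block-inverse identity, contradicting your strict bound. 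That step is sound but heavier than necessary: since your H\"older estimate is strict and $P\mapsto (\tfrac14[V_L^*]Y_{LL}[V_L^*])\inv\mathbf{max}(P,\mb 0)$ is continuous, $\tilde P_c$ is an interior point of the set described by \eqref{eqn:generalization simpson-porco condition}, which is contained in $\mc F$ by the first clause of Theorem~\ref{theorem:generalization simpson-porco condition}; hence $\tilde P_c\in\inter{\mc F}$ with no boundary analysis \textemdash\ this is precisely how the paper recovers Proposition~\ref{proposition:simpson-porco condition} from Theorem~\ref{theorem:generalization simpson-porco condition}. Your closing remark about reducing to the single-source grid via Lemma~\ref{lemma:single source} is fine as corroboration but is not needed.
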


The proof of Proposition~\ref{proposition:bolognani condition} in \cite{bolognani2015existence} also relies on a fixed point argument.
Proposition~\ref{proposition:bolognani condition} is recovered from Lemma~\ref{lemma:power demand domination} and Proposition~\ref{proposition:simpson-porco condition} as follows.

\begin{proof}
Let $\tilde P_c$ satisfy \eqref{eqn:bolognani condition}. Let $\hat P_c\in\mc N$ be such that $(\hat P_c)_i = |(\tilde P_c)_i|$. It follows that $\|\hat P_c\|_p = \|\tilde P_c\|_p$, and hence $\hat P_c$ satisfies \eqref{eqn:bolognani condition}.
The matrix $Y_{LL}$ is an M-matrix, and hence $[V_L^*]Y_{LL}[V_L^*]$ is a M-matrix by Proposition~\ref{P1-proposition:diagonal properties}.\PIref{proposition:diagonal properties:M-matrix multiplication}.
Its inverse is a positive matrix by \cite[Thm. 5.12]{fiedler1986special}.
Let $v^j\in\RR^n$ be such that $(v^j)\T$ is the $j$-th row of $(\tfrac 1 4 [V_L^*]Y_{LL}[V_L^*])\inv$. We have $v^j>\mb 0$.
By \eqref{eqn:bolognani norm} it follows from \eqref{eqn:bolognani condition} that for all $j$
\begin{align*}
\|v^j\|_q \|\hat P_c\|_p < 1.
\end{align*}
By H\"older's inequality (see, \eg, \cite[pp. 303]{roman2008advanced}) we have
\begin{align*}
\|[v^j]\hat P_c\|_1 \le \|v^j\|_q \|\hat P_c\|_p < 1.
\end{align*}
Since $[v^j]P_c \ge \mb 0$ we know that $\|[v^j]\hat P_c\|_1 = (v^j)\T \hat P_c$. 
This implies that $(v^j)\T \hat P_c< 1$ for all $i$, and hence 
\begin{align*}
(\tfrac 1 4 [V_L^*]Y_{LL}[V_L^*])\inv \hat P_c < \mb 1.
\end{align*}
Hence \eqref{eqn:simpson-porco condition} holds for $\hat P_c$, and $\hat P_c\in\inter{\mc F}$ by Proposition~\ref{proposition:simpson-porco condition}.
Since $\tilde P_c \le \hat P_c$, Lemma~\ref{lemma:power demand domination} implies that $\tilde P_c \in \inter{\mc F}$.
\end{proof}

Our proof of Proposition~\ref{proposition:bolognani condition} shows that the sufficient condition in \cite{bolognani2015existence} for nonnegative power demands is more conservative in comparison to the sufficient condition in \cite{simpson2016voltage}. This also shows that Theorem~\ref{theorem:generalization simpson-porco condition} generalizes both results. 
The next lemma gives a more intuitive interpretation of the condition \eqref{eqn:bolognani condition}, by showing that \eqref{eqn:bolognani condition} describes the largest open $p$-ball such that \eqref{eqn:simpson-porco condition} holds for nonnegative power demands.
\begin{lemma}\label{lemma:bolognani boundary}
Let $p,q\in[1,\infty]$ such that $\tfrac 1 p + \tfrac 1 q = 1$.
The sufficient condition for power flow feability \eqref{eqn:bolognani condition} in Proposition~\ref{proposition:bolognani condition} describes the open ball centered at $\mb 0$
\begin{align*}
\mc B :=\set{y\in \RR^n}{\|y\|_p<r}
\end{align*}
where the radius $r = (\|(\tfrac 1 4 [V_L^*]Y_{LL}[V_L^*])\inv \|_q^\star)\inv>0$ is the largest scalar such that \eqref{eqn:simpson-porco condition} holds for all $\tilde P_c\in\mc B \cap \mc N$.
\end{lemma}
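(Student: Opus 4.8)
The plan is to establish the two assertions of the lemma in turn: that \eqref{eqn:bolognani condition} cuts out exactly the open ball $\mc B$, and that its radius $r$ is optimal for the validity of \eqref{eqn:simpson-porco condition} on the nonnegative part of the ball. Throughout I abbreviate $M:=\tfrac 1 4 [V_L^*]Y_{LL}[V_L^*]$ and let $v^j$ denote the vector whose transpose is the $j$-th row of $M\inv$, so that $(M\inv y)_j=(v^j)\T y$ and, by \eqref{eqn:bolognani norm}, $\|M\inv\|_q^\star=\max_j\|v^j\|_q$. As recalled in the recovery of Proposition~\ref{proposition:bolognani condition}, $M\inv$ is a positive matrix; hence each $v^j>\mb 0$ and $\|M\inv\|_q^\star>0$, so $r$ is well defined and positive. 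The first assertion is then immediate: condition \eqref{eqn:bolognani condition} reads $\|M\inv\|_q^\star\,\|\tilde P_c\|_p<1$, and since $\|M\inv\|_q^\star>0$ this is equivalent to $\|\tilde P_c\|_p<(\|M\inv\|_q^\star)\inv=r$, so the set of power demands satisfying \eqref{eqn:bolognani condition} is precisely $\mc B=\set{y}{\|y\|_p<r}$.

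For the sufficiency direction I would take $\tilde P_c\in\mc B\cap\mc N$ and bound $M\inv\tilde P_c$ coordinate-wise by Hölder's inequality, exactly as in the recovery of Proposition~\ref{proposition:bolognani condition}: for each $j$ we have $(v^j)\T\tilde P_c\le\|v^j\|_q\|\tilde P_c\|_p\le\|M\inv\|_q^\star\|\tilde P_c\|_p<1$. Since $M\inv\ge\mb 0$ and $\tilde P_c\ge\mb 0$ give $M\inv\tilde P_c\ge\mb 0$, it follows that $\|M\inv\tilde P_c\|_\infty=\max_j(v^j)\T\tilde P_c<1$, so \eqref{eqn:simpson-porco condition} holds throughout $\mc B\cap\mc N$.

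The maximality of $r$ — that no strictly larger radius admits \eqref{eqn:simpson-porco condition} on the nonnegative part of the ball — is the crux, and is where the equality case of Hölder's inequality enters. Let $j^\star$ attain $\|M\inv\|_q^\star=\|v^{j^\star}\|_q$. I would exhibit a nonnegative vector $\bar P_c$ that saturates Hölder against $v^{j^\star}$: for $1<p<\infty$ take $(\bar P_c)_i\propto(v^{j^\star}_i)^{q/p}$, for $p=\infty$ take $\bar P_c\propto\mb 1$, and for $p=1$ take $\bar P_c$ supported on a coordinate where $v^{j^\star}$ is maximal. In each case $\bar P_c\ge\mb 0$ and $(v^{j^\star})\T\bar P_c=\|v^{j^\star}\|_q\|\bar P_c\|_p$; normalizing so that $\|\bar P_c\|_p=r$ yields $(M\inv\bar P_c)_{j^\star}=\|v^{j^\star}\|_q\,r=1$. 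Then for any $\rho>r$ and any $s\in(1,\rho/r)$ the vector $s\bar P_c$ lies in $\set{y}{\|y\|_p<\rho}\cap\mc N$ while $(M\inv(s\bar P_c))_{j^\star}=s>1$, so \eqref{eqn:simpson-porco condition} fails there. Hence no ball of radius exceeding $r$ validates \eqref{eqn:simpson-porco condition} throughout its nonnegative part, and $r$ is therefore the largest admissible radius.

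The only delicate point I expect is verifying the Hölder-equality construction and the nonnegativity of $\bar P_c$ uniformly over the conjugate exponents, in particular the endpoint cases $p\in\{1,\infty\}$; away from the endpoints the saturating vector is given by the standard proportionality condition for equality in Hölder's inequality, and the positivity $v^{j^\star}>\mb 0$ guarantees it can be taken nonnegative. Everything else reduces to the elementary manipulation of the defining inequalities already used above.
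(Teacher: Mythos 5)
Your proposal is correct and follows essentially the same route as the paper: the identification of the condition \eqref{eqn:bolognani condition} with the ball $\mc B$ is immediate, the sufficiency on $\mc B\cap\mc N$ reuses the H\"older argument from the recovery of Proposition~\ref{proposition:bolognani condition}, and the maximality of $r$ is obtained by exhibiting a nonnegative H\"older-saturating vector (your exponent $q/p$ coincides with the paper's $q-1$, and your endpoint cases match its treatment of $p=1$). Your explicit scaling argument showing that any radius $\rho>r$ fails is a slightly more detailed rendering of the paper's observation that the saturating vector lies on $\partial\mc B\cap\mc N$ with equality in \eqref{eqn:simpson-porco condition}, but the underlying idea is identical.
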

\begin{proof}
We show that there exists a nonnegative vector of power demands on the boundary of $\mc B$ such that such that equality in \eqref{eqn:simpson-porco condition} holds, which therefore defines to the radius $r$.
We continue our proof of Proposition~\ref{proposition:bolognani condition}.
Let $j$ be such that $\|v^j\|_q = \|(\tfrac 1 4 [V_L^*]Y_{LL}[V_L^*])\inv \|_q^\star$. 
If $p\neq 1$, then equality in Hölder's inequality $\|[v^j]\tilde P_c\|_1 \le \|v^j\|_q \|\tilde P_c\|_p$ holds if $(\tilde P_c)_i = c ((v^j)_i)^{q-1}$ for all $i$ and for any $c\in \RR^n$. 
Consider the positive vector of power demands $\hat P_c$ given by $(\hat P_c)_i = c ((v^j)_i)^{q-1}$ and where $c\inv = \|v^j\|_q^q$. For this vector we have $\|[v^j]\hat P_c\|_1 = \|v^j\|_q \|\hat P_c\|_p = 1$.
Hence, by following proof of Proposition~\ref{proposition:bolognani condition}, $\hat P_c$ satisfies equality in both \eqref{eqn:simpson-porco condition} and \eqref{eqn:bolognani condition}. Thus, $\hat P_c\in\partial \mc B\cap \mc N$, and $\|\hat P_c\|_p =  (\|v^j\|_q)\inv = r$.
If $p=1$, the same holds when we take $\hat P_c = e_i \|v^j\|_\infty\inv$, where $i$ is a single index such that $(v^j)_i = \|v^j\|_\infty$, and $(\hat P_c)_i = 0$ otherwise.
\end{proof}

Note that $\hat P_c$ constructed in the proof of Lemma~\ref{lemma:bolognani boundary} is not necessarily of the form \eqref{eqn:generalization simpson-porco condition powers}. Since \eqref{eqn:simpson-porco condition} is tight only for such points, this suggests that the condition \eqref{eqn:bolognani condition} is not tight in general.
This is can be observed for $p=q=2$ in Figure~\ref{figure:polyhedral sufficient condition 2} by the red dot, which %
does not lie on the boundary of $\mc F$.

\section{Desirable operating points}\label{section:high-voltage solution}

We conclude this paper by showing that for each feasible vector of power demands the different definitions of desirable operating points in Part~I (see Definitions~\ref{definition:long-term voltage semi-stable}, \ref{definition:dissipation-minimizing operating point} and \ref{definition:high-voltage solution}) identify the same unique operating point.
It was shown in \cite{matveev2020tool} that for each feasible power demand there exists a unique operating point which is a  high-voltage solution, and that this operating point is ``almost surely'' long-term voltage stable.
In addition, \cite{matveev2020tool} states that this operating point is the unique long-term voltage stable operating point if all power demands have the same sign.
The next theorem sharpens these results by showing that the long-term voltage stable operating point associated to a feasible vector of power demands
is a strict high-voltage solution.
\begin{theorem}[\mainresult{desirable operating point}]\label{theorem:dominating solution}
Let $\tilde P_c$ be a feasible vector of power demands (\ie, $\tilde P_c\in \mc F$). Let $\tilde V_L\in \cl{\mc D}$ be such that $\tilde V_L$ is an operating point associated to $\tilde P_c$ (\ie, $\tilde P_c = P_c(\tilde V_L)$), which exists and is unique by Theorem~\ref{theorem:one-to-one correspondence}.
Suppose there exists a vector $\tilde V_L^\prime\in\RR^n$ such that $\tilde V_L^\prime \neq \tilde V_L$ %
and $\tilde P_c=P_c(\tilde V_L^\prime)$, %
then $\tilde V_L^\prime < \tilde V_L$. Hence, $\tilde V_L$ is a strict high-voltage solution. Moreover, $\tfrac 1 2 (\tilde V_L^\prime + \tilde V_L)$ lies on the boundary of $\mc D$.
\end{theorem}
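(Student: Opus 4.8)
The plan is to combine the quadratic structure of $P_c$ with the M-matrix characterisation of $\cl{\mc D}$ in Lemma~\ref{lemma:D m-matrix}. First I would reduce to the generic case: since $\tilde P_c$ admits two distinct solutions it cannot lie on $\partial\mc F$, because Theorem~\ref{theorem:one-to-one boundary} guarantees a unique $\RR^n$-solution for every boundary demand; hence $\tilde P_c\in\inter{\mc F}$ and, by Corollary~\ref{corollary:interior F one-to-one D}, $\tilde V_L\in\mc D$, so that $A:=-\pdd{P_c}{V_L}(\tilde V_L)$ is a nonsingular irreducible M-matrix with $A\inv>\mb 0$. The key algebraic observation is that $P_c$ is quadratic, with the exact expansion $P_c(V_L+h)=P_c(V_L)+\pdd{P_c}{V_L}(V_L)h-[h]Y_{LL}h$. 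Expanding about $\bar V:=\tfrac12(\tilde V_L+\tilde V_L^\prime)$ in the two directions $\pm w$, where $w:=\tfrac12(\tilde V_L-\tilde V_L^\prime)$, and subtracting, yields the \emph{midpoint identity} $\pdd{P_c}{V_L}(\bar V)\,d=\mb 0$ with $d:=\tilde V_L-\tilde V_L^\prime$; thus $L_0:=-\pdd{P_c}{V_L}(\bar V)$ is singular with $d\in\ker L_0$. Expanding instead about $\tilde V_L$ in the direction $-d$ gives $A\,d=[d]Y_{LL}d$.

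The central step is to show $\bar V\in\partial\mc D$. I would follow the segment $\gamma(t)=\tilde V_L-tw$, so $\gamma(0)=\tilde V_L$, $\gamma(1)=\bar V$, and note that $-\pdd{P_c}{V_L}(\gamma(t))=A-t\,g(w)$ is affine in $t$. Its Perron root is positive at $t=0$ and, since $L_0$ is singular, nonpositive at $t=1$. Because $\cl{\mc D}$ lies in the open positive orthant (Lemma~\ref{lemma:D m-matrix}), positivity of $\gamma$ is maintained while $\gamma$ stays in $\cl{\mc D}$, so by continuity the Perron root first vanishes at some $t_0\in(0,1]$ with $\gamma(t_0)\in\partial\mc D$. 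To force $t_0=1$ I would use that, by the identity $\pdd{P_c}{V_L}(\bar V)w=\mb 0$, along this line $P_c(\gamma(t))=P_c(\bar V)-(1-t)^2[w]Y_{LL}w$ is, component-wise, a parabola symmetric about $t=1$; hence $\gamma(2-t_0)$ solves the same power-flow equation as $\gamma(t_0)$. Since $\gamma(t_0)\in\partial\mc D$ gives $P_c(\gamma(t_0))\in\partial\mc F$, the uniqueness of the $\RR^n$-solution for boundary demands (Theorem~\ref{theorem:one-to-one boundary}) forces $\gamma(2-t_0)=\gamma(t_0)$, and injectivity of $\gamma$ yields $t_0=1$. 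Thus $\bar V\in\partial\mc D$.

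With $\bar V\in\partial\mc D$, Lemma~\ref{lemma:D m-matrix} together with the irreducibility coming from $\bar V>\mb 0$ makes $L_0$ a singular irreducible M-matrix, whose kernel is spanned by a positive Perron vector (Proposition~\ref{proposition:Z-matrix perron result}); since $d\in\ker L_0$, the vector $d$ is entrywise of one sign, so $d=\sigma e$ with $e>\mb 0$ and $\sigma\in\{+1,-1\}$, and $A\,d=[d]Y_{LL}d$ becomes $A\,e=\sigma[e]Y_{LL}e$. To determine $\sigma$ I would pair against the left Perron vector $q$ of $A$: by the construction underlying Theorem~\ref{theorem:parametrization of D} (see Lemma~\PIref{lemma:characterization pddf M-matrix}), this $q$ is proportional to the parameter $\lambda\in\Lambda_1$ attached to $\tilde V_L\in\mc D$, so $h(q)$ is positive definite. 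Then $\rho_A\,q\T e=q\T A e=\sigma\,e\T h(q)\,e$, and as both $\rho_A\,q\T e$ and $e\T h(q)\,e$ are strictly positive, necessarily $\sigma=+1$. Hence $d=\tilde V_L-\tilde V_L^\prime>\mb 0$, i.e. $\tilde V_L^\prime<\tilde V_L$; since $\tilde V_L^\prime$ was an arbitrary second solution, $\tilde V_L$ is a strict high-voltage solution, and $\tfrac12(\tilde V_L+\tilde V_L^\prime)=\bar V\in\partial\mc D$ by the previous step.

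The main obstacle is the middle step, proving that the midpoint lands exactly on $\partial\mc D$: the Perron root of $A-t\,g(w)$ need not be monotone in $t$, so one cannot merely track it down to the boundary, and it is the reflection-plus-uniqueness argument---combined with the fact that $\cl{\mc D}$ avoids the boundary of the orthant, which keeps $\gamma$ positive throughout the continuation---that rules out an earlier crossing. A secondary point requiring care is the claim that the left Perron vector of $-\pdd{P_c}{V_L}(\tilde V_L)$ lies in $\Lambda$, on which the positive-definiteness of $h(q)$, and hence the sign determination, depends.
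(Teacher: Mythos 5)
Your proof is correct, and its core is the paper's own argument: the reduction to $\tilde P_c\in\inter{\mc F}$ via Theorem~\ref{theorem:one-to-one boundary}, the exact quadratic expansion giving the midpoint identity $\pdd{P_c}{V_L}(\bar V)d=\mb 0$, and the reflection-plus-uniqueness step (the two-to-one correspondence $P_c(\gamma(t))=P_c(\gamma(2-t))$ combined with uniqueness of solutions over $\partial\mc F$) forcing the first crossing of $\partial\mc D$ to occur exactly at the midpoint are all precisely what the paper does. Where you genuinely diverge is the final sign determination. The paper argues geometrically: since $\gamma$ meets $\partial\mc D$ only at the midpoint and $\tilde V_L\in\mc D$, the entire ray beyond $\tilde V_L$ lies in $\mc D$ and hence in the positive orthant, which is impossible for large parameter values if the kernel direction were negative. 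You instead argue algebraically: from $Ad=[d]Y_{LL}d$ with $d=\sigma e$, $e>\mb 0$, you pair against the left Perron vector $q$ of $A=-\pdd{P_c}{V_L}(\tilde V_L)$ to get $\rho_A\,q\T e=\sigma\,e\T h(q)e$ with both sides' factors positive, forcing $\sigma=+1$. This is valid: the identification $q\propto\lambda\in\Lambda_1$ (so that $h(q)\succ 0$) is exactly the Part~I characterization of the left Perron vector underlying Theorem~\ref{theorem:parametrization of D}, and one can verify directly that $-\pdd{P_c}{V_L}(\tilde V_L)\T\lambda=2r\lambda$ for $\tilde V_L=\varphi(\lambda)+rh(\lambda)\inv\lambda$. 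Your route buys a purely local computation at $\tilde V_L$ in place of the paper's global ray argument, at the cost of importing the dual Perron characterization; the paper's route needs only positivity of $\cl{\mc D}$ but must first establish that the whole half-line stays in $\mc D$. Both are sound.
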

\begin{proof}
If $\tilde P_c \in \partial \mc F$, then by Theorem~\ref{theorem:one-to-one boundary} the operating point $\tilde V_L\in \partial \mc D$ is the unique operating point associated to $\tilde P_c$. Hence a second operating point $\tilde V_L^\prime$ does not exist. The uniqueness of $\tilde V_L$ implies that $\tilde V_L$ is a high-voltage solution and is dissipation-minimizing.

If $\tilde P_c \in \inter{\mc F}$, then by Corollary~\ref{corollary:interior F one-to-one D} we have $\tilde V_L\in \mc D$. 
We define the vectors $v := \tfrac 1 2 (\tilde V_L + \tilde V_L^\prime)$ and $\mu:=\tfrac 1 2 (\tilde V_L - \tilde V_L^\prime)$, and the line $\gamma(\theta) := v + \theta \mu$.
Note that $\gamma(1) = \tilde V_L$ and $\gamma(-1) = \tilde V_L^\prime$.
Since $\tilde V_L\in \mc D$ and $\tilde V_L^\prime\not\in\cl{D}$ we have $\tilde V_L \neq \tilde V_L^\prime$, and so $\mu \neq \mb 0$.
Lemma~\PIref{lemma:lambda f} implies that
\begin{align}\label{eqn:dominating solution:1}
P_c(\gamma(\theta)) &= P_c(v + \theta \mu)\nolabel \\&= P_c(v) + \theta \pdd {P_c}{V_L} (v)\mu - \theta^2[\mu]Y_{LL}\mu.
\end{align}
Since $\tilde P_c = P_c(\gamma(1)) = P_c(\gamma(-1))$, it follows from \eqref{eqn:dominating solution:1} that 
\begin{align}\label{eqn:dominating solution:3}
\pdd {P_c}{V_L} (v)\mu = \mb 0.
\end{align}
We therefore have 
\begin{align}\label{eqn:dominating solution:2}
P_c(\gamma(\theta)) =  P_c(v) - \theta^2[\mu]Y_{LL}\mu,
\end{align}
which describes a half-line contained in $\mc F$.
Note also that $P_c(\gamma(\theta)) = P_c(\gamma(-\theta))$ and $\gamma(\theta)\neq \gamma(-\theta)$ if $\theta\neq 0$, which shows that the map $P_c(V_L)$ gives rise to a two-to-one correspondence between the line $\gamma(\theta)$ and the half-line \eqref{eqn:dominating solution:2} for $\theta\neq 0$.
The line $\gamma(\theta)$ crosses the boundary of $\mc D$ since $\gamma(1)\in \mc D$ and $\gamma(-1)\not\in \mc D$.
Let $\hat \theta$ be such that $\gamma(\hat \theta)\in \partial \mc D$.
Theorem~\ref{theorem:one-to-one boundary} implies that there does not exists $\hat V_L\neq \gamma(\hat \theta)$ such that $P_c(\gamma(\hat \theta)) = P_c(\hat V_L)$. Hence, due to the two-to-one correspondence between $\gamma(\theta)$ and \eqref{eqn:dominating solution:2} for $\theta\neq 0$, we conclude that $\hat \theta = 0$ and $\gamma(0) = v \in \partial \mc D$. Theorem~\ref{lemma:D m-matrix} implies that $-\pdd {P_c}{V_L}(v)$ is a singular M-matrix. Note that $\mu$ lies in the kernel of $-\pdd {P_c}{V_L}(v)$ due to \eqref{eqn:dominating solution:3}, and it follows from Lemma~\PIref{proposition:perron root M-matrix} that $\pm \mu>\mb 0$ and that $\mu$ spans the kernel of $-\pdd {P_c}{V_L}(v)$. 
Since $\gamma(\theta)$ intersects $\partial \mc D$ only when $\theta = 0$, and since $\gamma(1)\in\mc D$ and $\gamma(-1)\not\in\cl{\mc D}$, it follows that $\gamma(\theta)\in\mc D$ if and only if $\theta>0$.
However, if $\mu<\mb 0$ then $\gamma(\theta) = v + \theta \mu$ is a negative vector for sufficiently large $\theta$, which contradicts that all vectors in $\cl{\mc D}$ are %
positive. We conclude that $\mu>\mb 0$, which by definition of $\mu$ implies that $\tilde V_L > \tilde V_L^\prime$. The operating point $\tilde V_L$ is a strict high-voltage solution by Definition~\ref{definition:high-voltage solution}.
\end{proof}

We conclude by proving that the different types of operating points defined in Section~\ref{section:summary of part 1} 
describe one and the same operating point.

\begin{theorem}[\mainresult{desirable operating point}]\label{theorem:desirable operating point}
Let $\tilde P_c$ be a feasible vector of power demands, and let $\tilde V_L$ be an associated operating point (\ie, $\tilde P_c = P_c(\tilde V_L)$). The following statements are equivalent:
\begin{enumerate}[i)]
\item $\tilde V_L$ is long-term voltage semi-stable (\ie, $\tilde V_L\in\cl{\mc D}$);
\item $\tilde V_L$ is the unique long-term voltage semi-stable operating point associated to $\tilde P_c$;
\item $\tilde V_L$ is dissipation-minimizing;
\item $\tilde V_L$ is the unique dissipation-minimizing operating point associated to $\tilde P_c$;
\item $\tilde V_L$ is a high-voltage solution;
\item $\tilde V_L$ is a strict high-voltage solution.
\end{enumerate}
\end{theorem}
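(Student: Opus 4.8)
The plan is to show that all six statements are equivalent to the single assertion that $\tilde V_L$ coincides with the unique long-term voltage semi-stable operating point furnished by Theorem~\ref{theorem:one-to-one correspondence}. Write $\bar V_L$ for that point, so $\bar V_L\in\cl{\mc D}$ is the only vector with $P_c(\bar V_L)=\tilde P_c$. Two earlier results carry essentially all the content: by Theorem~\ref{theorem:dominating solution} the point $\bar V_L$ is a strict high-voltage solution, and by Corollary~\ref{corollary:high-voltage solution point dissipation-minimizing} it is therefore the unique dissipation-minimizing operating point associated to $\tilde P_c$. The remaining work is bookkeeping of definitions against these two facts.

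First I would settle (i) $\Leftrightarrow$ (ii). Theorem~\ref{theorem:one-to-one correspondence} states that $\bar V_L$ is the only element of $\cl{\mc D}$ mapping to $\tilde P_c$, so $\tilde V_L\in\cl{\mc D}$ holds precisely when $\tilde V_L=\bar V_L$, and in that case $\tilde V_L$ is automatically the unique such operating point. Hence both (i) and (ii) are equivalent to $\tilde V_L=\bar V_L$.

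Next I would run the forward implications out of the identity $\tilde V_L=\bar V_L$. From it, Theorem~\ref{theorem:dominating solution} gives (vi) directly; (vi) $\Rightarrow$ (v) is immediate from Definition~\ref{definition:high-voltage solution}; and Corollary~\ref{corollary:high-voltage solution point dissipation-minimizing} gives (vi) $\Rightarrow$ (iv), with (iv) $\Rightarrow$ (iii) immediate. To close the cycle I would prove (v) $\Rightarrow$ (i) and (iii) $\Rightarrow$ (i). For (v) $\Rightarrow$ (i): if $\tilde V_L$ is a high-voltage solution then, since $\bar V_L$ is an operating point associated to $\tilde P_c$, we get $\tilde V_L\ge\bar V_L$; but $\bar V_L$ is a strict high-voltage solution, so any operating point distinct from $\bar V_L$ is strictly dominated by it, forcing $\tilde V_L=\bar V_L$. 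For (iii) $\Rightarrow$ (i): since $\bar V_L$ is the unique dissipation-minimizing operating point, any dissipation-minimizing operating point equals $\bar V_L$, so $\tilde V_L=\bar V_L$. Together these implications establish the equivalence of all six statements.

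I expect the only delicate step to be the antisymmetry argument in (v) $\Rightarrow$ (i): a high-voltage solution dominates every operating point, in particular $\bar V_L$, while the strictness in Theorem~\ref{theorem:dominating solution} guarantees that $\bar V_L$ strictly dominates every \emph{other} operating point; the two inequalities are compatible only when the two points coincide, which is exactly where the word ``strict'' in Theorem~\ref{theorem:dominating solution} is indispensable. A minor point worth flagging is the boundary case $\tilde P_c\in\partial\mc F$, where Theorem~\ref{theorem:one-to-one boundary} leaves only a single operating point; there all six properties hold for that lone point and the equivalences are trivial, so the argument above still applies without change.
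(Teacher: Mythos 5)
Your proposal is correct and follows essentially the same route as the paper: both reduce all six statements to the identity $\tilde V_L=\bar V_L$ with the unique semi-stable point from Theorem~\ref{theorem:one-to-one correspondence}, invoke Theorem~\ref{theorem:dominating solution} and Corollary~\ref{corollary:high-voltage solution point dissipation-minimizing} for the strict high-voltage and dissipation-minimizing properties, and close the loop with the same antisymmetry observation for uniqueness of high-voltage solutions. Your write-up is merely more explicit about the individual implications than the paper's terser version.
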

\begin{proof}
Theorem~\ref{theorem:one-to-one correspondence} guarantees the existence and uniqueness of a long-term voltage semi-stable operating point $\hat V_L$ associated to $\tilde P_c$.
It therefore suffices to show that $\hat V_L$ is the unique operating point which satisfies statements iii)-vi) individually.
Theorem~\ref{theorem:dominating solution} implies that $\hat V_L$ is a (strict) high-voltage solution. Note that there exists at most one high-voltage solution, since $\tilde V_L \le \tilde V_L^\prime$ and $\tilde V_L^\prime \le \tilde V_L$ imply that $\tilde V_L = \tilde V_L^\prime$.
Corollary~\ref{corollary:high-voltage solution point dissipation-minimizing} implies that $\hat V_L$ is the unique dissipation-minimizing operating point.
\end{proof}

Theorem~\ref{theorem:desirable operating point} shows that the desirable operating points defined in Section~\ref{section:summary of part 1} coincide, and that we may speak of a single desired operating point.
\begin{definition}\label{definition:desired operating point}
An operating point $\tilde V_L$ associated to $\tilde P_c$ is the \emph{desired operating point} if $\tilde V_L$ satisfies one (and therefore all) of the equivalent statements in Theorem~\ref{theorem:desirable operating point}.
\end{definition}
\begin{remark}
In \cite{matveev2020tool} it was shown that for a feasible vector of power demands the algorithm proposed in \cite{matveev2020tool} converges to a high-voltage solution. By Theorem~\ref{theorem:desirable operating point} this means that this algorithm converges to the unique long-term voltage semi-stable operating point associated to these power demands.
\end{remark}

\section{Conclusion}\label{section:conclusion}

In this paper we constructed a framework for the analysis of the feasibility of the power flow equations for DC power grids. Within this framework we unified and generalized the results in the literature concerning this feasibility problem, and gave a complete characterization of feasibility.

In Part~II of this paper we %
gave a necessary and sufficient condition for the feasibility (under small perturbation) for nonnegative power demands. This condition is cheaper to compute than the LMI condition proposed in Part~I.
We have presented two novel sufficient conditions for the feasibility of a power demand, which were shown to generalize known sufficient conditions in the literature.
In addition we proved that any power demand dominated by a feasible power demand is also feasible.
Finally, we showed that the operating points corresponding to a power demand which are long-term voltage semi-stable, dissipation-minimizing, or a (strict) high-voltage solution, are one and the same.

Further directions of research may concern the question if and how the approach and/or results in this paper generalize to general AC power grids.
Other interesting directions of research concern the feasibility of the power flow equations with uncertain parameters, conditions for long-term voltage (semi-)stability of an operating point, and the (non)convexity of the set of such operating points. Furthermore, control schemes which implement the proposed conditions for power flow feasibility are of particular interest.

\appendix
\renewcommand{\thelemma}{A.\arabic{theorem}}
\subsection{Properties of $\mc M$}\label{appendix:properties of M}
\begin{lemma}\label{lemma:positivity of M}
If $\mu\in\mc M$, then $\mu>\mb 0$. 
Moreover, $\mc M$ is an open cone and is simply connected.
\end{lemma}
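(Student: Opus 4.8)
The plan is to prove the three assertions in turn, using the symmetrization identity \eqref{eqn:property of g} as the main tool together with a passage to logarithmic coordinates to handle the topological claim.

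First I would prove strict positivity. If $\mu\in\mc M$ then $g(\mu)$ is in particular a Z-matrix, so for $i\neq j$ we have $g(\mu)_{ij}=\mu_i(Y_{LL})_{ij}\le 0$; since $Y_{LL}$ is irreducible, every row $i$ has some $j\neq i$ with $(Y_{LL})_{ij}<0$, forcing $\mu_i\ge 0$, hence $\mu\ge\mb 0$. Suppose some $\mu_k=0$. Then by \eqref{eqn:definition of g} the off-diagonal entries of row $k$ of $g(\mu)$ vanish and the diagonal entry equals $(Y_{LL}\mu)_k=\sum_{j\neq k}(Y_{LL})_{kj}\mu_j\le 0$, each summand being a nonpositive off-diagonal entry of $Y_{LL}$ times a nonnegative $\mu_j$. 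As a nonsingular M-matrix has strictly positive diagonal entries, this is a contradiction; hence $\mu>\mb 0$. The cone property is then immediate from linearity: $g(t\mu)=t\,g(\mu)$ for $t>0$, and a positive multiple of a nonsingular M-matrix is again a nonsingular M-matrix.

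For openness I would reduce the M-matrix condition to positive definiteness. On the positive orthant $g(\mu)$ is an irreducible Z-matrix, hence a nonsingular M-matrix if and only if all its eigenvalues have positive real part. Identity \eqref{eqn:property of g} shows $g(\mu)$ is similar, via conjugation by $[\mu]^{1/2}$, to the symmetric matrix $S(\mu):=[\mu]^{1/2}Y_{LL}[\mu]^{1/2}+[Y_{LL}\mu]$, so this is equivalent to $S(\mu)$ being positive definite. Substituting $y=[\mu]^{1/2}x$ turns $x\T S(\mu)x$ into $y\T(Y_{LL}+[c(\mu)])y$, where $c(\mu)_i:=(Y_{LL}\mu)_i/\mu_i$; thus for $\mu>\mb 0$,
\[ \mu\in\mc M \iff Y_{LL}+[c(\mu)]\ \text{is positive definite}. \]
Since $c$ is continuous and the positive-definite cone is open, and since $\mc M\subseteq\{\mu>\mb 0\}$ by the first part, this exhibits $\mc M$ as an open subset of the open positive orthant, proving openness.

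For simple connectivity I would pass to logarithmic coordinates $\theta=(\log\mu_1,\dots,\log\mu_n)$. Each component $c(\mu)_i=(Y_{LL})_{ii}+\sum_{j\neq i}(Y_{LL})_{ij}\,e^{\theta_j-\theta_i}$ is a concave function of $\theta$, because $(Y_{LL})_{ij}\le 0$ multiplies the convex function $e^{\theta_j-\theta_i}$. The set $C:=\{c:Y_{LL}+[c]\ \text{positive definite}\}$ is convex and upward-closed, as adding a nonnegative diagonal preserves positive definiteness. Given $\theta,\theta'$ with $c(\theta),c(\theta')\in C$ and $\theta''=t\theta+(1-t)\theta'$, concavity yields $c(\theta'')\ge t\,c(\theta)+(1-t)\,c(\theta')\in C$, and upward-closedness then gives $c(\theta'')\in C$. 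Hence the image of $\mc M$ under the diffeomorphism $\mu\mapsto\log\mu$ is convex, so $\mc M$ is diffeomorphic to a convex set and is therefore contractible, in particular simply connected. The main obstacle is exactly this last claim: the set of nonsingular M-matrices is not convex, so one cannot argue directly from the linearity of $g$. The device that resolves it is the logarithmic change of coordinates, under which the defining inequality becomes a convex condition by virtue of the concavity of $\mu\mapsto(Y_{LL}\mu)_i/\mu_i$ in $\log\mu$ and the monotonicity of the positive-definite cone under diagonal shifts.
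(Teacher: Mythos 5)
Your proof is correct, and for the topological claim it takes a genuinely different route from the paper. The positivity and cone parts are essentially the paper's argument: the paper also extracts $\mu\ge\mb 0$ from the off-diagonal sign pattern of $g(\mu)$ and irreducibility of $Y_{LL}$, but to exclude zero entries it partitions $\boldsymbol n$ by the support of $\mu$, observes that $g(\mu)$ becomes block-triangular with a diagonal block $[(Y_{LL})_{[\alpha,\alpha\comp]}\mu_{[\alpha\comp]}]$ having a nonpositive eigenvalue, and contradicts the positivity of the Perron root; your appeal to the strict positivity of the diagonal of a nonsingular M-matrix reaches the same contradiction one row at a time and is slightly more economical. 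For openness the paper simply cites that the set of nonsingular M-matrices is open, which is shorter than your positive-definiteness reformulation, though both are sound (note that your symmetrization is really conjugation by $[\mu]^{1/2}$, a ``square root'' of the identity \eqref{eqn:property of g} rather than that identity itself). The real divergence is simple connectivity: the paper transports it from $\partial\mc D$ (known to be simply connected from Part~I) through the bicontinuous parametrization of Theorem~\ref{theorem:alternative parametrization of D}, so its proof rests on the correspondence between $\mc M_1$ and the operating points. Your argument is self-contained: after reducing membership in $\mc M$ to positive definiteness of $Y_{LL}+[c(\mu)]$ with $c(\mu)_i=(Y_{LL}\mu)_i/\mu_i$, you exploit the concavity of each $c_i$ in $\log\mu$ together with the convexity and upward-closedness of the positive-definite condition to show that $\log(\mc M)$ is convex. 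This buys more than the lemma asks for \textemdash{} $\mc M$ is diffeomorphic to a convex set, hence contractible \textemdash{} and it removes any dependence on the parametrization theorems, which is structurally cleaner given that the paper invokes the positivity part of this very lemma earlier in the chain leading to Theorem~\ref{theorem:alternative parametrization of D}.
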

\begin{proof}
If $\mu\in\mc M$, then $g(\mu)=[\mu]Y_{LL}+[Y_{LL}\mu]$ is a nonsingular M-matrix, and therefore a Z-matrix. Recall that $Y_{LL}$ is an irreducible Z-matrix, which implies that $(Y_{LL})_{[i,i\comp]}\lneqq 0$ for all $i$. 
If $\mu_i<0$, then $g(\mu)_{[i,i\comp]} = \mu_i (Y_{LL})_{[i,i\comp]}\gneqq 0$, which contradicts the fact that $g(\mu)$ is a Z-matrix. Hence $\mu\ge \mb 0$. %
We will show that a vector $\mu$ which contains zeros does not yield an M-matrix.
Suppose $\mu\in \mc M$ such that $\mu_{[\alpha]}=\mb 0$ and $\mu_{[\alpha\comp]} > \mb 0$ for some nonempty set $\alpha\subset\boldsymbol n$.
Since $\mu_{[\alpha]}=\mb 0$, the following submatrices of $[\mu]Y_{LL}+[Y_{LL}\mu]$ satisfy
\begin{align}\label{eqn:positivity of M:off-diagonal block}
([\mu]Y_{LL}+[Y_{LL}\mu])_{[\alpha,\alpha\comp]} = [\mu_{[\alpha]}](Y_{LL})_{[\alpha,\alpha\comp]} = 0
\end{align}
and 
\begin{align}\label{eqn:positivity of M:diagonal block}
\begin{aligned}
&([\mu]Y_{LL}+[Y_{LL}\mu])_{[\alpha,\alpha]} \\
&= [\mu_{[\alpha]}](Y_{LL})_{[\alpha,\alpha]} + [(Y_{LL}\mu)_{[\alpha]}] \\
&= 0 + \l[(Y_{LL})_{[\alpha,\alpha]} \mu_{[\alpha]} + (Y_{LL})_{[\alpha,\alpha\comp]} \mu_{[\alpha\comp]}\r]\\
&=[(Y_{LL})_{[\alpha,\alpha\comp]} \mu_{[\alpha\comp]}].
\end{aligned}
\end{align}
It follows from \eqref{eqn:positivity of M:off-diagonal block} that $[\mu]Y_{LL}+[Y_{LL}\mu]$ is block-triangular. Hence, the eigenvalues of each diagonal block are also eigenvalues of $[\mu]Y_{LL}+[Y_{LL}\mu]$. The principal submatrix given in \eqref{eqn:positivity of M:diagonal block} is such a diagonal block. Note from \eqref{eqn:positivity of M:diagonal block} that this block is diagonal, and so its eigenvalues are the elements of the vector $(Y_{LL})_{[\alpha,\alpha\comp]} \mu_{[\alpha\comp]}$. Since $Y_{LL}$ is an  irreducible Z-matrix we have $(Y_{LL})_{[\alpha,\alpha\comp]}\lneqq 0$. Recall that $\mu_{[\alpha\comp]} > \mb 0$, which implies that
\begin{align*}
(Y_{LL})_{[\alpha,\alpha\comp]} \mu_{[\alpha\comp]} \lneqq \mb 0,
\end{align*}
and so $[\mu]Y_{LL}+[Y_{LL}\mu]$ has nonpositive eigenvalues. However, since $[\mu]Y_{LL}+[Y_{LL}\mu]$ is an M-matrix, its Perron root is positive and is a lower bound for all other eigenvalues, which is a contradiction. We conclude that $\mu>\mb 0$.

The matrix $g(\mu)$ is linear in $\mu$. Hence, scaling of $\mu$ gives rise to a scaling of the eigenvalues of $g(\mu)$, and in particular of the Perron root of $g(\mu)$. Hence $\mc M$ is a cone. The set of nonsingular M-matrices is open, and so $\mc M$ is an open set.

The set $\partial \mc D$ is simply connected by Theorem~\ref{theorem:parametrization of D}. Theorem~\ref{theorem:alternative parametrization of D} shows that there exists a bicontinuous map between $\partial \mc D$ and $\mc M_1$. Topological properties are preserved by bicontinuous maps, and hence $\mc M_1$ is also simply connected. Its conic hull $\mc M$ is therefore also simply connected.
\end{proof}

\begin{lemma}\label{lemma:D subset of M}
The set of long-term voltage semi-stable operating points is contained in $\mc M$ (\ie, $\cl{\mc D} \subset \mc M$).
\end{lemma}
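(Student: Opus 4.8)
The plan is to read $\mc M$ through the identity $g(\tilde V_L) = -\pdd{P_c}{V_L}(\tilde V_L) + [\mc I_L^*]$, which is just a rearrangement of \eqref{eqn:rewrite of jacobian of P_c}. First I would take $\tilde V_L\in\cl{\mc D}$; by Lemma~\ref{lemma:D m-matrix} this gives $\tilde V_L>\mb 0$ and that $A:=-\pdd{P_c}{V_L}(\tilde V_L)$ is an M-matrix. Since $\tilde V_L>\mb 0$ and $Y_{LL}$ is an irreducible Z-matrix, the matrix $g(\tilde V_L)$ is an irreducible Z-matrix (by Propositions~\ref{P1-proposition:diagonal properties} and~\ref{P1-proposition:sum of irreducible Z-matrices} of Part~I, exactly as in the proof of Lemma~\ref{lemma:jacobian of P_c M-matrix}), and hence so is $A$, since the two differ only on the diagonal. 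By Lemma~\ref{lemma:positive open-circuit voltages} we have $\mc I_L^*\gneqq\mb 0$, so $[\mc I_L^*]$ is a nonzero, nonnegative diagonal matrix. Proving $\tilde V_L\in\mc M$ then amounts to showing that $g(\tilde V_L)=A+[\mc I_L^*]$ is a \emph{nonsingular} M-matrix.

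To establish this I would run the same Perron-vector computation used in Lemma~\ref{lemma:jacobian of P_c M-matrix}. Let $r\ge 0$ be the Perron root of the irreducible M-matrix $A$, with left Perron vector $w>\mb 0$; its existence and positivity follow from Proposition~\ref{proposition:Z-matrix perron result} applied to $A\T$, and $r\ge 0$ because $A$ is an M-matrix. Let $s$ and $u>\mb 0$ be the Perron root and right Perron vector of the irreducible Z-matrix $g(\tilde V_L)=A+[\mc I_L^*]$. Left-multiplying $(A+[\mc I_L^*])u=s\,u$ by $w\T$ and using $w\T A=r\,w\T$ gives $r\,w\T u + w\T[\mc I_L^*]u = s\,w\T u$, so that $s = r + (w\T[\mc I_L^*]u)/(w\T u)$. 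Since $w>\mb 0$, $u>\mb 0$ and $\mc I_L^*\gneqq\mb 0$, the quantity $w\T[\mc I_L^*]u=\sum_i w_i(\mc I_L^*)_i u_i$ is strictly positive, and $w\T u>0$; hence $s>0$. Therefore $g(\tilde V_L)$ is a nonsingular M-matrix and $\tilde V_L\in\mc M$, as claimed.

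The main obstacle is precisely the strict inequality $s>0$ on the boundary $\partial\mc D$, where $A$ is a \emph{singular} M-matrix ($r=0$) and $[\mc I_L^*]$ may vanish on several coordinates. A nonnegative diagonal perturbation supported on only part of the diagonal need not move the Perron root of an arbitrary singular matrix, so the conclusion really relies on the irreducibility of $A$ — which forces both one-sided Perron vectors to be \emph{strictly} positive — together with $\mc I_L^*\neq\mb 0$. These two facts are exactly what make the weighted sum $\sum_i w_i(\mc I_L^*)_i u_i$ strictly positive despite $[\mc I_L^*]$ having zero entries, and I would make sure to invoke $w,u>\mb 0$ (rather than merely $\gneqq\mb 0$) at that point.
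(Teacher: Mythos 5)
Your proof is correct and follows essentially the same route as the paper: both start from the identity $g(\tilde V_L) = -\pdd{P_c}{V_L}(\tilde V_L) + [\mc I_L^*]$ and conclude that adding the nonzero nonnegative diagonal $[\mc I_L^*]$ to the irreducible M-matrix $-\pdd{P_c}{V_L}(\tilde V_L)$ yields a nonsingular M-matrix. The only difference is that the paper discharges this last perturbation step by citing a proposition from Part~I, whereas you prove it directly with the left/right Perron-vector computation; your strictness argument ($s > r \ge 0$ because $w,u>\mb 0$ and $\mc I_L^*\gneqq\mb 0$) is precisely the content of that cited result.
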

\begin{proof}
Recall from Theorem~\ref{lemma:D m-matrix} that if $\tilde V_L\in \cl{\mc D}$, then $-\pdd {P_c}{V_L}(\tilde V_L)$ is an M-matrix. This means that $g(\tilde V_L) - [\mc I_L^*]$ is an M-matrix by \eqref{eqn:rewrite of jacobian of P_c}. 
By adding $[\mc I_L^*]$ to $g(\tilde V_L) - [\mc I_L^*]$, Proposition~\ref{P1-proposition:diagonal properties}:\PIref{proposition:diagonal properties:irreducible M-matrix addition} implies that $g(\tilde V_L)$ is an M-matrix since $\mc I_L^*\gneqq \mb 0$. 
\end{proof}
\subsection{Proof of Theorem~\ref{theorem:generalization simpson-porco condition}}\label{appendix:proof of sufficient condition}\noindent

For the sake of notation we follow Lemma~\ref{lemma:single source} and define $\hat Y_{LL}:=[V_L^*]Y_{LL}[V_L^*]$, which is an irreducible nonsingular M-matrix. It follows from \cite[Thm. 5.12]{fiedler1986special} that the inverse of $\hat Y_{LL}$ is positive. %
Let $S$ be the set of $\tilde P_c$ defined by 
\begin{align}\label{eqn:generalization simpson-porco condition proof:1}
\tilde P_c \ge \mb 0;\quad  \hat Y_{LL}\mathstrut \inv \tilde P_c \le \tfrac 1 4 \mb 1,
\end{align}
which corresponds to all $\tilde P_c\in\mc N$ so that \eqref{eqn:generalization simpson-porco condition} holds.
The set $S$ is convex and \eqref{eqn:generalization simpson-porco condition proof:1} describes the intersection of $2n$ closed half-spaces. The normals to these half-spaces are given by the canonical basis vectors $e_1,\dots,e_n$ and the rows of $\hat Y_{LL}\mathstrut\inv$.
The set $S$ is bounded since $\hat Y_{LL}\mathstrut \inv$ is positive.
Weyl's Theorem \cite[pp. 88]{magaril2003convex} states that $S$ is the convex hull of the points which lie on the boundary of $n$ half-spaces in \eqref{eqn:generalization simpson-porco condition proof:1} so that their corresponding normals span $\RR^n$.
To this end we define $P_c^\emptyset :=\mb 0$, which lies on the boundary of the $n$ half-spaces described by $\tilde P_c \ge \mb 0$.
Similarly, we let $\alpha\subset \boldsymbol n$ be nonempty and let $P_c^\alpha\in S$ be a point described by Weyl's Theorem for which $(\hat Y_{LL}\mathstrut \inv \tilde P_c)_{[\alpha]} = \tfrac 1 4 \mb 1$ and $(\hat Y_{LL}\mathstrut \inv \tilde P_c)_{[\alpha\comp]} < \tfrac 1 4 \mb 1$. The corresponding normals are given by the rows of $\hat Y_{LL}\mathstrut \inv$ indexed by $\alpha$. Since $\hat Y_{LL}$ is positive definite we know that $(\hat Y_{LL}\mathstrut)_{[\alpha,\alpha]}$ is positive definite and therefore nonsingular. 
The only choice of normals of the half-spaces which complete the span of $\RR^n$ are $e_i$ for $i\in\alpha\comp$, which implies $(P_c^\alpha)_{[\alpha\comp]}=\mb 0$. Since $(P_c^\alpha)_{[\alpha\comp]}=\mb 0$ we have 
\begin{align*}
\tfrac 1 4 \mb 1 = (\hat Y_{LL}\mathstrut \inv P_c^\alpha)_{[\alpha]}  = (\hat Y_{LL}\mathstrut \inv)_{[\alpha,\alpha]} (P_c^\alpha)_{[\alpha]},
\end{align*}
and therefore $(P_c^\alpha)_{[\alpha]} = \tfrac 1 4 (\hat Y_{LL}\mathstrut \inv)_{[\alpha,\alpha]}\mathstrut\inv\mb 1$.
By the block matrix inverse formula \cite[Eq. (0.8.1)]{zhang2006schur} we observe that
\begin{align}\label{eqn:generalization simpson-porco condition proof:5}
(P_c^\alpha)_{[\alpha]} = \tfrac 1 4 (\hat Y_{LL} / (\hat Y_{LL})_{[\alpha\comp,\alpha\comp]}) \mb 1.
\end{align}
The above exhaustively describes all points specified by Weyl's Theorem, and hence we have
\begin{align*}
S = \conv{\set{P_c^\alpha}{\alpha\subset\boldsymbol n}}.
\end{align*}
Recall that $P_c^\emptyset = \mb 0 = P_c(V_L^*)\in\inter{\mc F}$. 
The points $P_c^\alpha$ for nonempty $\alpha\subset\boldsymbol n$ correspond to the power demands described in \eqref{eqn:generalization simpson-porco condition powers} through substitution of $\hat Y_{LL}=[V_L^*]Y_{LL}[V_L^*]$. %
We show that these points lie on the boundary of $\mc F$.
Note that for $\alpha=\boldsymbol n$ we have by \eqref{eqn:generalization simpson-porco condition proof:5}, \eqref{eqn:open-circuit voltages} and \eqref{eqn:max total power demand} that
\begin{align*}
P_c^{\boldsymbol n} = \tfrac 1 4 \hat Y_{LL}\mb 1 = \tfrac 1 4[V_L^*]Y_{LL}[V_L^*]\mb 1 = \tfrac 1 4[V_L^*]\mc I_L^* = P_{\text{max}},
\end{align*}
which lies on the boundary of $\mc F$.
Consider any feasible power demand $\tilde P_c\in\mc F$ such that $(\tilde P_c)_{[\alpha]}=\mb 0$ with  $\alpha\neq\emptyset,\boldsymbol n$. 
Let $\tilde V_L>\mb 0$ be a so that $\tilde P_c = P_c(\tilde V_L)$.
By \eqref{eqn:definition of f} we have 
\begin{align}\label{eqn:generalization simpson-porco condition proof:3}
\mb 0 = P_c(\tilde V_L)_{[\alpha\comp]} = [(\tilde V_L)_{[\alpha\comp]}] (Y_{LL}(V_L^* - \tilde V_L))_{[\alpha\comp]}
\end{align} 
where we used \eqref{eqn:open-circuit voltages}. Since $(\tilde V_L^\alpha)_{[\alpha\comp]} > \mb 0$ it follows from \eqref{eqn:generalization simpson-porco condition proof:3} that $(Y_{LL}(V_L^* - \tilde V_L))_{[\alpha\comp]} = \mb 0$. 
Since $(Y_{LL})_{[\alpha\comp,\alpha\comp]}$ is nonsingular, we may solve for $(\tilde V_L)_{[\alpha\comp]}$. Similar to \cite[Eq. (0.7.4)]{zhang2006schur}, substitution of $(\tilde V_L)_{[\alpha\comp]}$ in $P_c(\tilde V_L)_{[\alpha\comp]}$ yields
\begin{align*}%
P_c(\tilde V_L)_{[\alpha]} = [(\tilde V_L)_{[\alpha]}]Y_{LL}/(Y_{LL})_{[\alpha\comp,\alpha\comp]}((V_L^*)_{[\alpha]} - (\tilde V_L)_{[\alpha]}),
\end{align*}
which corresponds to the power flow equations of a Kron-reduced power grid (see, \eg, \cite{dorfler2012kron,schaft2019flow}). Analogous to Lemma~\ref{lemma:maximizing power demand}, the maximizing feasible power demand for the Kron-reduced power grid is obtained by taking $(\tilde V_L)_{[\alpha]} = \tfrac 1 2 (V_L^*)_{[\alpha]}$, which corresponds in the power demand $P_c^{\alpha}$. 
Hence $P_c^{\alpha}$ lies on the boundary of $\mc F$.
Since $\mc F$ is convex by Theorem~\ref{theorem:convexity of F}, and $P_c^{\alpha}\in\mc F$ for all $\alpha\subset\boldsymbol n$, we have that $S\subset \mc F\cap \mc N$. 
Each supporting half-space of $\mc F$ has a unique point of support (Theorem~\PIref{theorem:supporting half-spaces of F}), and so the boundary of $\mc F$ does not contain a line piece. Consequently, the all points in $S$ other than the points $P_c^\alpha$ for $\alpha\neq\emptyset$ lie in the interior of $\mc F$. Lemma~\ref{lemma:power demand domination} implies \eqref{eqn:generalization simpson-porco condition} from \eqref{eqn:generalization simpson-porco condition proof:1}.\hfill\QED

\end{document}